\newcommand{\LoadPackagesNow}{}
\newcommand{\LoadPackageLater}[2][]{%
   \g@addto@macro{\LoadPackagesNow}{%
      \usepackage[#1]{#2}%
   }%
}
\tikzset{
	pics/carc/.style args={#1:#2:#3}{
		code={
			\draw[pic actions] (#1:#3) arc(#1:#2:#3);
		}
	}
}
\tikzstyle{blackdot}=[shape=circle,fill=black,minimum size=1mm,inner sep=0pt,outer sep=0pt]
\g@addto@macro\bfseries{\boldmath}
\numberwithin{equation}{section}
\definecolor{pdfurlcolor}{rgb}{0,0,0.6}
\definecolor{pdffilecolor}{rgb}{0.7,0,0}
\definecolor{pdflinkcolor}{rgb}{0,0,0.6}
\definecolor{pdfcitecolor}{rgb}{0,0,0.6}
\newcommand{\ifargdef}[3][{}]{\ifthenelse{\equal{#2}{}}{#1}{#3}}
\newlength{\hangind}
\newcommand{\myhangindent}[1]{\settowidth{\hangind}{\widthof{#1}}\hangindent=\the\hangind}
\newenvironment{highlight}{\begin{quote}\itshape}{\end{quote}}
\newenvironment{listing}
{\begin{itemize}[itemindent=0em,leftmargin=1.2em]}
{\end{itemize}}
\newenvironment{rmklist}
{\begin{enumerate}[label={(\arabic*)},itemindent=2em,leftmargin=0em]}
{\end{enumerate}}
\newenvironment{thmproperties}
{\begin{enumerate}[label={(\roman*)}]}
{\end{enumerate}}
\newtheoremstyle{claim}
	{\topsep}{\topsep}%
	{\itshape}
	{}
	{}
	{}
	{.5em}
	{{\bfseries\boldmath\thmname{#1} \thmnumber{#2}} \thmnote{(#3)}}
\newtheoremstyle{definition}
	{\topsep}{\topsep}%
	{}
	{}
	{}
	{}
	{.5em}
	{\textbf{\thmname{#1} \thmnumber{#2}} \thmnote{(#3)}}
\newtheoremstyle{algorithm}
	{\topsep}{\topsep}%
	{}
	{}
	{\bfseries\boldmath}
	{}
	{\newline}
	{\thmname{#1} \thmnumber{#2} \thmnote{(#3)}}
\declaretheorem[style=claim,numberwithin=section]{theorem}
\declaretheorem[style=claim,sibling=theorem]{lemma}
\declaretheorem[style=claim,sibling=theorem]{fact}
\declaretheorem[style=claim,sibling=theorem]{corollary}
\declaretheorem[style=claim,sibling=theorem]{proposition}
\declaretheorem[style=definition,sibling=theorem]{definition}
\declaretheorem[style=definition,sibling=theorem]{assumption}
\declaretheorem[style=claim,sibling=theorem]{problem}
\declaretheorem[style=definition,sibling=theorem,qed=$\Diamond$]{remark}
\declaretheorem[style=algorithm,sibling=theorem,%
	preheadhook={\begin{mdframed}[style=emphframe] \setcounter{mpfootnote}{\value{footnote}}},%
	postfoothook=\setcounter{footnote}{\value{mpfootnote}}\end{mdframed}]{experiment}
\declaretheorem[style=algorithm,sibling=theorem,%
	preheadhook={\begin{mdframed}[style=emphframe] \setcounter{mpfootnote}{\value{footnote}}},%
	postfoothook=\setcounter{footnote}{\value{mpfootnote}}\end{mdframed}]{algorithm}
\declaretheorem[style=definition,sibling=theorem,%
	preheadhook={\begin{mdframed}[style=boxed] \setcounter{mpfootnote}{\value{footnote}}},%
	postfoothook=\setcounter{footnote}{\value{mpfootnote}}\end{mdframed}]{recipe}
\newcommand{\opleft}[1]{\mathopen{}\left#1}
\newcommand{\opright}[1]{\right#1\mathclose{}}
\newcommandx{\braces}[4]{%
\ifstrequal{#3}{normal}{#1#4#2}{%
\ifstrequal{#3}{auto}{\left#1#4\right#2}{%
\ifstrequal{#3}{opauto}{\opleft#1#4\opright#2}{%
#3#1#4#3#2}}}%
}
\newcommandx{\opannot}[3][3=\downarrow]{\stackrel{\mathclap{\substack{#1 \\ #3 \vspace{2pt}}}}{#2}}
\newcommandx{\lineannot}[3][3=\rightarrow]{\mathllap{\boxed{\text{\textsmaller{#1}}} #3} #2}
\newcommandx{\multilineannot}[4][4=\rightarrow]{\mathllap{\boxed{\parbox{#1}{\RaggedRight\textsmaller{#2}}} #4} #3}
\newcommand{\N}{\mathbb{N}} 
\newcommand{\R}{\mathbb{R}} 
\newcommand{\suchthat}[1][normal]{\ifstrequal{#1}{normal}{\mid}{#1|}} 
\newcommandx{\intvcl}[3][1=normal]{\braces{[}{]}{#1}{#2, #3}} 
\newcommandx{\intvop}[3][1=normal]{\braces{(}{)}{#1}{#2, #3}} 
\newcommandx{\intvclop}[3][1=normal]{\braces{[}{)}{#1}{#2, #3}} 
\newcommandx{\intvopcl}[3][1=normal]{\braces{(}{]}{#1}{#2, #3}} 
\DeclareMathOperator{\sign}{sign}
\newcommandx{\abs}[2][1=normal]{\braces{\lvert}{\rvert}{#1}{#2}} 
\newcommandx{\ceil}[2][1=normal]{\braces{\lceil}{\rceil}{#1}{#2}} 
\newcommandx{\floor}[2][1=normal]{\braces{\lfloor}{\rfloor}{#1}{#2}} 
\newcommandx{\round}[2][1=normal]{\braces{[}{]}{#1}{#2}} 
\newcommandx{\der}[1]{D^{#1}} 
\newcommandx{\gradient}{\nabla} 
\newcommandx{\partder}[4][1={},4={}]{\frac{\partial^{#4} #2}{\partial #3^{#4}}\ifargdef{#1}{\Big|_{#1}}} 
\newcommandx{\integ}[4][1={},2={}]{\int_{#1}^{#2} #3 \, #4} 
\newcommandx{\asympffaster}[2][1=normal]{o\braces{(}{)}{#1}{#2}} 
\newcommandx{\asympfaster}[2][1=normal]{O\braces{(}{)}{#1}{#2}} 
\newcommandx{\asympeq}[2][1=normal]{\Theta\braces{(}{)}{#1}{#2}} 
\newcommandx{\asympsslower}[2][1=normal]{\omega\braces{(}{)}{#1}{#2}} 
\newcommandx{\asympslower}[2][1=normal]{\Omega\braces{(}{)}{#1}{#2}} 
\newcommandx{\norm}[2][1=normal]{\braces{\|}{\|}{#1}{#2}} 
\renewcommandx{\sp}[3][1=normal]{\braces{\langle}{\rangle}{#1}{#2, #3}} 
\newcommandx{\End}[2][2={}]{\mathcal{L}\opleft( #1 \ifargdef{#2}{, #2} \opright)} 
\DeclareMathOperator{\spann}{\operatorname{span}} 
\newcommandx{\measure}[2][1=normal]{\operatorname{vol}\braces{(}{)}{#1}{#2}} 
\DeclareMathOperator{\supp}{supp} 
\newcommandx{\Leb}[3][1={},3=normal]{L^{#2}\ifargdef{#1}{\braces{(}{)}{#3}{#1}}{}} 
\newcommandx{\Lebnorm}[4][1=normal,3={2},4={}]{\norm[#1]{#2}_{#3}} 
\renewcommandx{\l}[3][1={},3=normal]{\ell^{#2}\ifargdef{#1}{\braces{(}{)}{#3}{#1}}} 
\newcommandx{\lnorm}[4][1=normal,3={2},4={}]{\norm[#1]{#2}_{#3}} 
\newcommandx{\Smooth}[4][1={},3={},4=normal]{C_{#3}^{#2}\ifargdef{#1}{\braces{(}{)}{#4}{#1}}} 
\newcommandx{\Schwartz}[2][1={},2=normal]{\mathscr{S}\ifargdef{#1}{\braces{(}{)}{#2}{#1}}} 
\newcommandx{\Schwartzpoly}[2][1=normal]{\braces{\langle}{\rangle}{#1}{\abs[#1]{#2}} } 
\newcommandx{\Tempdistr}[2][1={},2=normal]{\mathscr{S}'\ifargdef{#1}{\braces{(}{)}{#2}{#1}}} 
\newcommandx{\distrinp}[3][1=normal]{\braces{\langle}{\rangle}{#1}{#2, #3}} 
\newcommandx{\ft}[3][1=default,2=auto]{
\ifstrequal{#1}{default}{\widehat{#3}}{
\ifstrequal{#1}{long}{{\braces{(}{)}{#2}{#3}}^{\wedge}}{}}} 
\newcommandx{\ift}[3][1=default,2=auto]{
\ifstrequal{#1}{default}{\check{#3}}{
\ifstrequal{#1}{long}{{\braces{(}{)}{#2}{#3}}^{\vee}}{}}} 
\let\oldcdot\cdot
\renewcommand{\cdot}{\oldcdot\protect\nolinebreak}
\DeclareMathOperator*{\convh}{conv}
\newcommand{\gennorm}{{\|\cdot\|}}
\newcommandx{\SCPR}[2][1=normal]{\braces{\langle}{\rangle}{#1}{#2}} 
\DeclarePairedDelimiter\scpr{\langle}{\rangle}
\newcommand{\Prob}{\mathbb{P}}
\newcommand{\E}{\mathbb{E}}
\newcommand{\Sphere}{\mathbb{S}}
\newcommand{\MID}[1][normal]{\ifstrequal{#1}{normal}{\mid}{\,#1|\,}} 
\DeclareMathOperator*{\op}{op}
\DeclareMathOperator*{\tr}{tr}
\newcommand{\sset}{K}
\newcommandx{\prob}[2][1={},2=normal]{\mathbb{P}\ifargdef{#1}{\braces{[}{]}{#2}{#1}}}
\newcommandx{\mean}[2][1={},2=normal]{\mathbb{E}\ifargdef{#1}{\braces{[}{]}{#2}{#1}}}
\newcommandx{\var}[2][1={},2=normal]{\mathbb{V}\ifargdef{#1}{\braces{[}{]}{#2}{#1}}}
\newcommandx{\Normdistr}[3][1=normal]{\mathcal{N}\braces{(}{)}{#1}{#2, #3}} 
\newcommandx{\normsubg}[2][1=normal]{\norm[#1]{#2}_{\psi_2}} 
\newcommandx{\anorm}[3][1=normal,3={\sset}]{\norm[#1]{#2}_{#3}} 
\newcommandx{\opnorm}[2][1=normal]{\norm[#1]{#2}_{\operatorname{op}}} 
\newcommandx{\pospart}[2][1=auto]{\braces{[}{]}{#1}{#2}_+}
\newcommandx{\ball}[2][1={},2={}]{B_{#1}^{#2}} 
\newcommand{\cone}[1]{\operatorname{cone}(#1)} 
\begin{document}

\renewcommand*{\thefootnote}{\fnsymbol{footnote}}
\pagestyle{scrheadings}

\begin{center}
	\bfseries\larger[3]{Generic Error Bounds for the Generalized Lasso \\ with Sub-Exponential Data}
\end{center}

\vspace{1\baselineskip}
\begin{addmargin}[2em]{2em}
\begin{center}
\noindent{\normalsize\textbf{Martin Genzel}\footnote{Utrecht University, Mathematical Institute, Utrecht, Netherlands (e-mail:~\href{mailto:m.genzel@uu.nl}{\texttt{m.genzel@uu.nl}}); this work was done at Technische Universit\"at Berlin.} \qquad \textbf{Christian Kipp}\footnote{Technische Universit\"at Berlin, Department of Mathematics, Berlin, Germany (e-mail:~\href{mailto:kipp@math.tu-berlin.de}{\texttt{kipp@math.tu-berlin.de}}).}}
\end{center}


\vspace{1\baselineskip}
{\smaller
\noindent\textbf{Abstract.}
This work performs a non-asymptotic analysis of the generalized Lasso under the assumption of sub-exponential data.
Our main results continue recent research on the benchmark case of (sub-)Gaussian sample distributions and thereby explore what conclusions are still valid when going beyond.
While many statistical features remain unaffected (e.g., consistency and error decay rates), the key difference becomes manifested in how the complexity of the hypothesis set is measured.
It turns out that the estimation error can be controlled by means of two complexity parameters that arise naturally from a generic-chaining-based proof strategy.
The output model can be non-realizable, while the only requirement for the input vector is a generic concentration inequality of Bernstein-type, which can be implemented for a variety of sub-exponential distributions. 
This abstract approach allows us to reproduce, unify, and extend previously known guarantees for the generalized Lasso.
In particular, we present applications to semi-parametric output models and phase retrieval via the lifted Lasso.
Moreover, our findings are discussed in the context of sparse recovery and high-dimensional estimation problems.

\vspace{.5\baselineskip}
\noindent\textbf{Key words.}
Generalized Lasso, high-dimensional signal estimation, statistical learning, sub-exponential data, generic chaining.

\vspace{.5\baselineskip}
\noindent\textbf{MSC 2010 classes.}
60D05, 62F30, 62F35, 90C25.

}
\end{addmargin}
\newcommand{\shortauthor}{Genzel and Kipp: The Generalized Lasso with Sub-Exponential Data}

\renewcommand*{\thefootnote}{\arabic{footnote}}
\setcounter{footnote}{0}


\thispagestyle{plain}

\section{Introduction}
\label{sec:intro}

This paper is concerned with the following common inference problem in statistical learning: Let $(x_1,y_1), \dots, (x_n,y_n) \in \R^{p} \times \R$ be samples of a random input-output pair $(x,y) \in \R^{p} \times \R$, whose joint probability distribution is unknown. What information about the relationship between~$x$ and~$y$ can we retrieve only based on the knowledge of $(x_1,y_1), \dots, (x_n,y_n)$?

A classical instance of this problem is \emph{linear regression}, where $y$ depends linearly on $x$, say $y = \scpr{x,\beta_0} + \nu$ for an unknown parameter vector $\beta_0 \in \R^p$ and independent additive noise $\nu$. While the resulting task of estimating $\beta_0$ is nowadays fairly well understood in the low-dimensional regime $n \geq p$, it is still subject of ongoing research in the high-dimensional regime $n \ll p$.
In the latter scenario, it is indispensable to impose additional conditions on the input-output model. A typical assumption is that $\beta_0$ belongs to a known, convex \emph{hypothesis set} $\sset \subset \R^p$ that is of low complexity in a certain sense.
In such a model setup, a natural estimation procedure is based on solving the \emph{generalized Lasso}:\footnote{We adopt the common term `generalized Lasso' from the literature (e.g., see \citep{pv16}), as a tribute to the original \emph{Lasso} estimator introduced by \citet{tib96}, where the hypothesis set corresponds to a scaled $\l{1}$-ball, serving as a convex relaxation of \emph{sparse} parameter vectors. Taking the viewpoint of statistical learning, \eqref{LS_K} is a specific instance of \emph{(constraint) empirical risk minimization}, but this terminology appears somewhat too general for the purpose of this paper.}
\begin{equation} \label{LS_K} \tag{LS$_K$}
	\min_{\beta \in K} \ \tfrac{1}{n} \sum_{i=1}^n (y_i-\scpr{x_i,\beta})^2.
\end{equation}
The popularity of Lasso-type estimators is due to several desirable properties. Perhaps most importantly, many efficient algorithmic implementations are available for \eqref{LS_K} due to the convexity of $\sset$ (e.g., see \citep{ehjt04,zh05,tib11}), accompanied by the suitability for a statistical analysis due to its simple variational formulation (e.g., see the textbooks \citep{bg11,fh13,htw15}).
A more astonishing feature of the generalized Lasso \eqref{LS_K} is its ability to deal with \emph{non-linear} relations between~$x$ and~$y$. In fact, inspired by a classical result of \citet{bri82}, a recent work of \citet{pv16} shows that for Gaussian input vectors, \eqref{LS_K} yields a consistent estimator for \emph{single-index models}, i.e., $y = f(\scpr{x, \beta_0})$ with an unknown, non-linear distortion function $f : \R \to \R$.
This finding has triggered a lot of related and follow-up research, e.g., see~\citep{tah15,os16,gen16,tr19,gmw18,so19,tr18,gj19}. We note that these works form only a small fraction of a whole research area on non-linear observation models, lying at the interface of statistics, learning theory, signal processing, and compressed sensing. A comprehensive review of the literature goes beyond the scope of this paper, and we refer the reader to \citep[Sec.~4.2]{gen19} and the references therein for more details in that regard.

The present work is inspired by the general framework developed in \citep{gen19}, which enables a theoretical analysis of \eqref{LS_K} for a large class of \emph{semi-parametric observation models} (see also the technical report \citep{gen18}).
More specifically, we will not assume an explicit functional relationship between $x$ and $y$, such as for single-index models; note that a similar viewpoint is taken by \citet{so19}, who refrain from a \emph{realizable model} connecting the input and output. Adopting this abstract setup, we intend to address the following parameter estimation problem:
\begin{problem}\label{intro:problem}
	Under what conditions is the generalized Lasso \eqref{LS_K} capable of estimating a certain \emph{target vector} $\beta^\natural \in K$ that carries information about the relationship between~$x$ and~$y$? What is the impact of the sample size $n$ and the complexity of the (convex) hypothesis set $K \subset \R^p$?
\end{problem}
For the moment, it is convenient to assume that the vector which provides the desired `information' is an \emph{expected risk minimizer}, i.e., we set $\beta^\natural \coloneqq \beta^*$ where $\beta^* \in K$ is a solution to the expected risk minimization problem (on $K$):
\begin{equation}\label{LS_K_exp}
	\min_{\beta \in K} \ \E[(y-\scpr{x,\beta})^2].
\end{equation}
Indeed, this simplification reduces the above problem to a well-known challenge in statistical learning theory, namely finding the best possible (linear) predictor of $y$ by empirical risk minimization.\footnote{To simplify the presentation even further, one might simply assume a noisy linear model $y = \scpr{x, \beta_0} + \nu$. In this case, the key messages of our main results remain valid; in particular, our conclusions on heavier tailed input data are also of interest to high-dimensional linear regression.}
However, we wish to emphasize that the absolute magnitude of the prediction error is only of minor importance to our approach, since the predictive capacity of the Lasso is likely to be poor unless $y$ follows a linear model.
On the other hand, one can still hope for a satisfactory outcome in the sense of Problem~\ref{intro:problem}, which explains why all guarantees presented in this paper concern the (parameter) estimation error.
More details on relevant scenarios, where~$\beta^\natural$ is not necessarily equal to the expected risk minimizer, are discussed later in the context of semi-parametric (non-linear) output models; see Subsection~\ref{subsec:discussion:semipara}.

The first author's dissertation~\citep{gen19} gives a far-reaching answer to Problem~\ref{intro:problem} for \mbox{\emph{(sub-)Gaus\-sian}} input vectors---an assumption that is made in almost all of the above-mentioned works on the generalized Lasso and related estimators.
However, the setup of \cite{gen19} does not adequately address output models with even non-linearities, most prominently, the \emph{phase retrieval problem} where $y = |\scpr{x, \beta_0}|$.\footnote{More specifically, if $x$ is standard Gaussian, then \eqref{LS_K} becomes a useless estimator in this case, since the expected risk minimizer $\beta^*$ in \eqref{LS_K_exp} is just the zero vector.}
The initial motivation of the present article was to address this fundamental shortcoming by applying the ``phase lift trick'' \citep{cesv13,csv13} to the Lasso.
A key feature of the lifted Lasso is that the input vector $x \in \R^p$ is replaced by the tensor product $xx^T$, see Subsection~\ref{subsec:discussion:liftedlasso} for details. 
If $x$ is an isotropic sub-Gaussian random vector, then $xx^T$ is a (generally anisotropic) \emph{sub-exponential} random matrix satisfying a mixed-tail inequality of Bernstein type (whose precise form depends on the exact assumptions on~$x$). 
The strategy we follow here is tailored to this problem, but abstracts away from all irrelevant details.
Therefore, going clearly beyond non-asymptotic error bounds for phase-retrieval-like models, we obtain a general solution to Problem~\ref{intro:problem} if the sample data have sub-exponential tails.
Indeed, research on this subject is still in its infancy, and we intend to make progress by presenting a unified approach.
This particularly includes the derivation of new and extension of known results on statistical estimation with sub-exponential data (see Section~\ref{sec:discussion}).
Before outlining our general approach in Subsection~\ref{subsec:intro:contrib}, we would like to familiarize the reader with the setup of this paper by presenting a prototypical estimation guarantee for sub-exponential input vectors.

\subsection{A Simple Error Bound for Sub-Exponential Input Vectors}
\label{subsec:intro:simple}

Let us begin with the formal definition of sub-Gaussian and sub-exponential random variables:
\begin{definition}[Sub-Gaussian/sub-exponential random variables]\label{def_sg_se}
For $\alpha \in \{1,2\}$, we define the \emph{exponential Orlicz norm} of a random variable $Z: \Omega \to \R$ by\footnote{We do not explicitly mention the underlying probability space here. In fact, our analysis does not require any treatment of measure theoretic issues and we simply assume that the probability space is rich enough to model all random quantities and processes that we are interested in.}
\begin{equation}
	\norm{Z}_{\psi_\alpha} \coloneqq \inf \Big\{ t>0 \MID[\big] \E\Big[\exp\Big(\tfrac{|Z|^\alpha}{t^\alpha}\Big)\Big] \leq 2 \Big\}.
\end{equation}
The \emph{exponential Orlicz space} $L_{\psi_\alpha}$ is then denoted by
\begin{equation}
	L_{\psi_\alpha} \coloneqq \{Z: \Omega \to \R \MID \norm{Z}_{\psi_\alpha}<\infty\}.
\end{equation}
The elements of the exponential Orlicz spaces $L_{\psi_1}$ and $L_{\psi_2}$ are called \emph{sub-exponential} and \emph{sub-Gaussian random variables}, respectively.
\end{definition}
The notions of sub-exponentiality and sub-Gaussianity impose restrictions on the tails of a random variable, which must not be ``too heavy''. This intuition gives rise to several equivalent versions of Definition~\ref{def_sg_se}, which are summarized in Proposition~\ref{prop_sg_se_properties} in Appendix~\ref{sec:app:facts}; for a more detailed introduction, we refer to \citep[Chap.~2~\&~3]{ver18}.

The definition of sub-Gaussian and sub-exponential random vectors is characterized by their one-dimensional marginals (i.e., projections onto one-dimensional subspaces):
\begin{definition}[Sub-Gaussian/sub-exponential random vectors]\label{def_sg_se_randvect}
For a random vector $x \in\nobreak \R^p$ and $\alpha \in \{1,2\}$, we set
\begin{equation}
	\norm{x}_{\psi_\alpha} \coloneqq \sup_{v \in \Sphere^{p-1}} \norm{\scpr{x,v}}_{\psi_\alpha}.
\end{equation}
If $\norm{x}_{\psi_2}<\infty$, we say that $x$ is \emph{(uniformly) sub-Gaussian}, and if $\norm{x}_{\psi_1}<\infty$, we say that $x$ is \emph{(uniformly) sub-exponential}.
\end{definition}

The following result states a non-asymptotic error bound for the generalized Lasso \eqref{LS_K} with sub-exponential input vectors. 
Its proof is provided in Subsection~\ref{subsec:proofs:prop_intro}, being a ``by-product'' of one of our main results, Corollary~\ref{cor_eg_global} in Subsection~\ref{subsec:mainresults:globalandconic}.
For the sake of simplicity, we restrict ourselves to a polytopal hypothesis set $\sset$ here, as this allows for explicit bounds on the complexity parameters.
Moreover, it is worth emphasizing that for linear models, i.e., if $y = \scpr{x, \beta_0}$, we simply obtain an estimation guarantee for $\beta^* = \beta_0$.
\begin{proposition}\label{prop_intro}
Let $(x,y) \in \R^{p} \times \R$ be a joint random pair such that $y \in \R$ is sub-exponential and $x \in \R^p$ is isotropic and sub-exponential with $\norm{x}_{\psi_1} \leq \kappa$ for some $\kappa > 0$.
Let $K \subset \R^p$ be a convex polytope with $D$ vertices and Euclidean diameter $\Delta_2(K)$, and let $\beta^* \in K$ be the expected risk minimizer on $K$, i.e., a solution to \eqref{LS_K_exp}. Finally, let the observed sample pairs $(x_1,y_1),\dots,(x_n,y_n) \in \R^{p} \times \R$ be independent copies of $(x,y)$. 
Then there exists a universal constant $C>0$ such that for every $u \geq 8$, the following holds true with probability at least $1-5\exp(-C\cdot u^2)-2\exp(-C \cdot \sqrt{n})$:
If the sample size obeys
\begin{equation}\label{prob_eg_condition_n}
	n \gtrsim \Big(\kappa^{10}\cdot\Delta_2(K)\cdot\Big(\tfrac{\log(D)}{\sqrt{n}}+\sqrt{\log(D)}\Big)+ \kappa^{6} \cdot u\Big)^2,
\end{equation}
then every minimizer $\hat{\beta}$ of \eqref{LS_K} satisfies
\begin{equation}\label{pro_eg_bound_t}
	\norm{\hat{\beta}-\beta^*}_2 \lesssim \kappa^{18} \cdot \max\{1,u^2 \cdot \sigma(\beta^*) \}\cdot  \frac{\sqrt{\kappa \cdot\Delta_2(K)\cdot\log(D)}}{n^{1/4}},
\end{equation}
where $\sigma(\beta^*) \coloneqq \norm{y-\scpr{x,\beta^*}}_{\psi_1}$.
\end{proposition}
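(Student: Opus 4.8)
The plan is to derive Proposition~\ref{prop_intro} as a specialization of the global estimation guarantee Corollary~\ref{cor_eg_global}, which controls $\|\hat\beta - \beta^*\|_2$ in terms of two chaining-type complexity parameters of the hypothesis set together with a sample-size condition. Two things then remain: verifying that the present random pair $(x,y)$ meets the abstract hypotheses of that corollary, and bounding its complexity parameters explicitly when $K$ is a polytope with $D$ vertices.

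For the first point, isotropy of $x$ makes the population covariance equal to the identity, so the expected risk minimizer $\beta^*$ from \eqref{LS_K_exp} is well defined and the quadratic part of the excess-risk expansion is exactly $\|\beta - \beta^*\|_2^2$; the assumption $\|x\|_{\psi_1} \le \kappa$ supplies the Bernstein-type concentration inequality that the framework requires as input, applied to the multiplier process whose increments are $\langle x, \beta - \beta^* \rangle\,(y - \langle x, \beta^* \rangle)$ and whose tail behaviour---being a product of two sub-exponential variables---is governed by $\kappa$ and by $\sigma(\beta^*) = \|y - \langle x, \beta^* \rangle\|_{\psi_1}$; and sub-exponentiality of $y$ ensures $\sigma(\beta^*) < \infty$. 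Once these are checked, Corollary~\ref{cor_eg_global} applies and yields the failure-probability terms $5\exp(-Cu^2)$ and $2\exp(-C\sqrt n)$ verbatim.

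For the second point, since $K$ is the convex hull of its $D$ vertices and the relevant chaining functionals depend on $K$ only through linear increments in the Euclidean metric, it suffices to estimate them on the finite vertex set, which is contained in a ball of radius $O(\Delta_2(K))$. For $D$ points of mutual $\ell_2$-distance at most $\Delta_2(K)$, standard covering/Dudley estimates give $\gamma_2 \lesssim \Delta_2(K)\sqrt{\log D}$ for the $\gamma_2$-type (Gaussian-width-type) parameter and $\gamma_1 \lesssim \Delta_2(K)\log D$ for the sub-exponential one. Substituting these two bounds into Corollary~\ref{cor_eg_global} reproduces the bracket $\tfrac{\log D}{\sqrt n} + \sqrt{\log D}$ in the sample-size condition \eqref{prob_eg_condition_n} and the factor $\sqrt{\kappa\,\Delta_2(K)\log D}/n^{1/4}$ in \eqref{pro_eg_bound_t}, while collecting the accompanying powers of $\kappa$ and $u$ produces the exponents $\kappa^{10}$ and $\kappa^{18}$. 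The genuine work---the generic-chaining control of the multiplier and quadratic processes under only a Bernstein-type assumption, and the resulting $n^{-1/4}$ rate---is entirely contained in Corollary~\ref{cor_eg_global}; within this proposition the only steps needing care are making the reduction to the vertex set rigorous for the precise complexity measures used by the framework and checking the Bernstein hypothesis with the claimed $\kappa$-dependence, which I expect to be the most tedious part, though not conceptually deep.
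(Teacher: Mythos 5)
Your high-level reduction is the same as the paper's: specialize Corollary~\ref{cor_eg_global} to the sub-exponential setting, and bound the global q- and m-complexities for a polytope via \eqref{eq_complexity_finiteset} (equivalently Proposition~\ref{prop_polytopes}). The covering bounds you quote, $\gamma_2 \lesssim \Delta_2(K)\sqrt{\log D}$ and $\gamma_1 \lesssim \Delta_2(K)\log D$, are exactly what the paper uses, and combining them with the sub-exponential semi-norms $(\gennorm_g,\gennorm_e)=(0,C_1\kappa\gennorm_2)$ from Proposition~\ref{prop_se} correctly produces the bracket $\frac{\log D}{\sqrt n}+\sqrt{\log D}$ and the $\sqrt{\kappa\Delta_2(K)\log D}$ factor.

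However, there is a genuine gap: you never address the small-ball condition \eqref{small_ball_condition}, which is part of Assumption~\ref{model_setup} and hence a hypothesis that Corollary~\ref{cor_eg_global} silently requires. In the corollary, the quantity $\tau\cdot Q_{2\tau}(K^\Delta,x)$ appears both in the sample-size condition \eqref{cor_eg_global_condition_n} and (squared in the denominator) as a prefactor in the error bound \eqref{cor_eg_global_bound_t}. The entire point of the paper's proof of Proposition~\ref{prop_intro} is to show that $\tau\cdot Q_{2\tau}(K^\Delta,x) \gtrsim \kappa^{-9}$ and $Q_{2\tau}(K^\Delta,x) \gtrsim \kappa^{-6}$. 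This is done via the Paley--Zygmund estimate \eqref{eq_paley_zygmund}: one sets $\alpha := \inf_{v\in K^\Delta}\E|\scpr{x,v}|$, $\delta := \sup_{v\in K^\Delta}\E[\scpr{x,v}^2]=1$ (isotropy), $\tau := \alpha/4$, and then uses Cauchy--Schwarz together with the moment growth $\norm{\scpr{x,v}}_{L^3} \lesssim \kappa$ (Proposition~\ref{prop_sg_se_properties}\ref{prop_sg_se_mom}) to conclude from
\begin{equation}
1 = \E[\scpr{x,v}^2]^2 \leq \E|\scpr{x,v}|\cdot\E|\scpr{x,v}|^3 \lesssim \E|\scpr{x,v}|\cdot\kappa^3
\end{equation}
that $\alpha \gtrsim \kappa^{-3}$. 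This is where almost every power of $\kappa$ in the statement actually comes from: $(\tau Q)^{-2}\lesssim\kappa^{18}$, $(\tau Q)^{-1}q_n^{(g,e)}(K)\lesssim\kappa^{10}\Delta_2(K)(\ldots)$, and $Q^{-1}u\lesssim\kappa^6 u$. Your proposal attributes the $\kappa$-exponents vaguely to ``checking the Bernstein hypothesis,'' but verifying Bernstein concentration via Proposition~\ref{prop_se} contributes only a single factor of $\kappa$ through $\Delta_e(K)=C_1\kappa\Delta_2(K)$; without the small-ball/Paley--Zygmund argument you have no mechanism for producing $\kappa^{18}$, $\kappa^{10}$, or $\kappa^6$.

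A second, smaller omission: Corollary~\ref{cor_eg_global} produces a bound with $[\rho_0(\beta^\natural) + \cdots]_+$ inside, and to drop the $\rho_0$ term you need the observation from Remark~\ref{rmk_mismatch} that the first-order optimality condition for $\beta^*$ (as a minimizer of \eqref{LS_K_exp} over the convex set $K$) forces $\rho_0(\beta^*) \leq 0$. You note that isotropy makes $\beta^*$ well defined, but that is not the same as arguing that the mismatch covariance is non-positive. Also, your claim that isotropy makes the quadratic part of the excess risk ``exactly $\|\beta-\beta^*\|_2^2$'' is only true in expectation; the actual control of the empirical quadratic process is precisely what Theorem~\ref{thm_msbm} (the small-ball method) does, and it requires the bound on $Q_{2\tau}$ discussed above.
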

Informally speaking, Proposition~\ref{prop_intro} shows that estimation of the expected risk minimizer succeeds with overwhelmingly high probability as long as $n \gg \Delta_2(K)^2\cdot \log(D)^2$. Such a statement is particularly appealing to high-dimensional problems such as sparse recovery.
Another remarkable conclusion is that the estimator \eqref{LS_K} essentially performs as well as if the sample data were sub-Gaussian (cf.~\citep[Thm.~4.3]{gen19}).
Our main results in Section~\ref{sec:mainresults} confirm this observation in much greater generality, but they will also reveal several important differences to the sub-Gaussian case;
first and foremost, we will be concerned with defining appropriate complexity measures for~$K$, which do not explicitly appear in the polytopal setting of Proposition~\ref{prop_intro}.
In this respect, it is important to note that there are relevant special cases of sub-exponential vectors (e.g., those with independent coordinates) for which the above estimate is too pessimistic and can be improved. The elaboration of this aspect is a key concern of this article and motivates the introduction of a generic tail condition that takes the underlying ``geometry'' of the problem into account.
Apart from this, let us also emphasize that the simplifications of Proposition~\ref{prop_intro} come along with a suboptimal behavior regarding, (a), the error decay rate $O(n^{-1/4})$, (b), the sub-exponential parameter~$\kappa$, and (c), the model deviation parameter $\sigma(\beta^*)$.

To the best of our knowledge, Proposition~\ref{prop_intro} is a new result, but it bears resemblance with a recent finding of \citet[Thm.~3.4]{so19}, who consider a similar model setup with sub-exponential input vectors.
Their analysis focuses on the projected gradient descent method, as an algorithmic implementation of \eqref{LS_K}, and is therefore related to our estimation guarantees; see Subsection~\ref{subsec:discussion:se} for a more detailed comparison.

\subsection{Contributions and Overview}
\label{subsec:intro:contrib}

The main purpose of this work is to shed more light on the estimation capacity of the generalized Lasso \eqref{LS_K} when the sample data are not sub-Gaussian.
While Proposition~\ref{prop_intro} already gives a first glimpse into the prototypical situation of sub-exponential input vectors, we intend to address this problem in a more systematic and abstract way (cf.~Problem~\ref{intro:problem}).
At the heart of our statistical analysis stands the so-called \emph{generic Bernstein concentration}, which is introduced in Subsection~\ref{subsec:mainresults:preliminaries} (see Definition~\ref{def_gen_bern}). 
This concept is the outcome of a somewhat uncommon proof strategy: Instead of assuming a specific (sub-exponential) distribution for $x$, we study the associated excess risk of \eqref{LS_K} in an abstract sense, relying on an advanced \emph{generic chaining} argument due to \citet{men16}.
Consequently, the key step of our approach is to understand the increment behavior of the underlying stochastic processes, and in fact, this precisely leads to generic Bernstein concentration as a natural condition for $x$. In that way, we are able to explore \eqref{LS_K} for a whole class of input distributions and thereby to refine the assumption of uniform sub-exponentiality in Proposition~\ref{prop_intro}.
Another important outcome of our analysis are two general complexity parameters for the hypothesis set $K$ (see Definition~\ref{def_q_complexity} and~\ref{def_m_complexity}), which are compatible with the notion of generic Bernstein concentration.

With these preliminaries at hand, we formulate our main result in Subsection~\ref{subsec:mainresults:localerrorbound} (see Theorem~\ref{thm_eg_local}), which provides a novel, non-asymptotic error bound for \eqref{LS_K} under generic Bernstein concentration.
However, a direct application of this guarantee to specific model situations is not always straightforward, since the aforementioned complexity parameters are of local nature, implicitly depending on the desired precision level.
For this reason, we present two more easily accessible corollaries of Theorem~\ref{thm_eg_local} in Subsection~\ref{subsec:mainresults:globalandconic}. These results are based on simplified complexity parameters (see Definition~\ref{def_conic_q_m_complexity} and~\ref{def_global_q_m_complexity}, respectively), but come with the price of looser error bounds and sample-size conditions.

While the purpose of Section~\ref{sec:mainresults} is to develop a unified analysis for the generalized Lasso \eqref{LS_K}, Section~\ref{sec:discussion} is devoted to various applications and examples of our findings.
We begin with a brief discussion on semi-parametric modeling in Subsection~\ref{subsec:discussion:semipara}, demonstrating how our general results may be applied to specific parameter estimation problems.
This is followed by several relevant examples of generic Bernstein concentration (see Subsection~\ref{subsec:discussion:sg}--\ref{subsec:discussion:se}), leading to off-the-shelf guarantees for \eqref{LS_K} with sub-exponential and sub-Gaussian sample data; these parts also provide a comparison to related approaches in the literature.
In Subsection~\ref{subsec:discussion:liftedlasso}, we then revisit our motivating case study on the \emph{lifted Lasso} for phase-retrieval-like problems---a scenario where sub-exponential distributions arise naturally.
Finally, Subsection~\ref{subsec:discussion:compl} contains a more detailed discussion of the complexity parameters from Section~\ref{sec:mainresults}. In this context, it will become clearer that measuring complexity beyond sub-Gaussianity is a delicate issue and comes along with unexplored difficulties.
Nevertheless, we are able to establish simple bounds in the prototypical case of $\l{1}$-balls, making our error bounds applicable to high-dimensional estimation and sparse recovery.
Some concluding remarks are made in Section~\ref{sec:conclusion}.

\subsection{Differentiation from Previous Works}

Apart from enabling heavier-tailed data, a crucial feature of generic Bernstein concentration is that it does \emph{not require any type of isotropy}.
Instead, the ``geometric'' behavior of the input vectors is captured by selecting two appropriate semi-norms (see Definition~\ref{def_gen_bern}).
This relaxation is key to the applicability of our results, as it allows us to handle structured, but anisotropic input vectors, such as arising in the phase lift approach (see Subsection~\ref{subsec:discussion:liftedlasso}).
It is not clear to us how this important challenge could be addressed with other techniques, especially those suggested in \citep{gen19} and related articles mentioned above.\footnote{Unfortunately, the idea of \citep{gen19} to capture anisotropic and correlated structures through \emph{mixing matrices} is no remedy in our situations of interest, e.g., phase-lifted input vectors.}
The concept of generic Bernstein concentration therefore also presents a novel and systematic solution to this open problem, which is arguably one of the most significant achievements compared to previous works.

We close this part with another clarification: The present article is concerned with the generalized Lasso \eqref{LS_K} when the sample data are heavier tailed than sub-Gaussian, in particular, the underlying distribution may be \emph{unbounded}.
An alternative strategy is to first truncate the raw data at an appropriate threshold and then to apply \eqref{LS_K} or a similar estimator. 
In fact, the latter approach is quite common in practice, but it also facilitates a theoretical study due to the boundedness of the involved random variables, e.g., see~\citep{ybkl17,ybl17,ybwl17,gmw18,wei18} for related results on non-linear observation models.
However, the (concentration-based) machinery for bounded sample data is certainly not applicable to the model setup of the present paper.
Instead, we rather follow the conceptual ideas of \citet{men15,men18}, who points out the downsides of the bounded framework and develops a general theory for heavy-tailed problems.
Although our analysis is concerned with more specific model assumptions, it is just general enough to allow for a rigorous understanding of estimation with sub-exponential data, thereby unifying and improving a series of previously known results from the literature (see Section~\ref{sec:discussion}). Thus, to a certain degree, our work can be seen as a ``connecting piece'' between such highly customized approaches and the abstract theory of \citet{men15,men18}.

\subsection{Notation}
\label{subsec:intro:notation}

The letter $C$ is reserved for constants, whose values could change from time to time, and we say that $C$ is \emph{universal} if its value does not depend on any other involved parameter. If an inequality holds true up to a universal constant $C > 0$, we usually write $A \lesssim B$ instead of $A \leq C \cdot B$; the notation $A \asymp B$ means that both $A \lesssim B$ and $B \lesssim A$ hold true. Furthermore, the \emph{positive part} of a real number $s \in \R$ is denoted by $\pospart{s} \coloneqq \max\{s, 0\}$.

The \emph{cardinality} of a finite set $I$ is denoted by $|I|$. The \mbox{$j$-th} entry of a vector $v \in \R^p$ is denoted by~$v_j$ and the \emph{support} of $v$ is defined as $\supp(v) \coloneqq \{ j \suchthat v_j \neq 0 \}$. The cardinality of $\supp(v)$ is referred to as the \emph{sparsity} of $v$ and we write $\norm{v}_0 \coloneqq |\supp(v)|$.
For $1 \leq q \leq \infty$, we denote the \emph{$\l{q}$-norm} on $\R^p$ by $\gennorm_{q}$ and the associated \emph{unit ball} by $B_q^p$.
The \emph{Euclidean unit sphere} is given by $\Sphere^{p-1} \coloneqq \{ v \in \R^p \MID \norm{v}_2 = 1 \}$. 
The \emph{Frobenius norm} is denoted by $\gennorm_F$ and the \emph{spectral norm} by $\gennorm_{\op}$. We write $I_p \in \R^{p \times p}$ for the \emph{identity matrix}. 

Let $L \subset \R^p$. By $\spann(L)$, $\cone{L}$, and $\convh(L)$, we denote the \emph{linear hull}, \emph{conic hull}, and \emph{convex hull}, respectively.
The \emph{diameter} of $L$ with respect to a (pseudo-)metric $d$ is defined as $\Delta(L) \coloneqq \sup_{v_1,v_2 \in L} d(v_1,v_2)$.

Let $x$ be a random vector in $\R^p$. We say that $x$ is \emph{centered} if $\E[x] = 0$, it is \emph{symmetric} if $x$ has the same distribution as $-x$, and it is \emph{isotropic} if $\E[xx^T] = I_p$. The $L^q$-norm of a real-valued random variable $Z$ is $\norm{Z}_{L^q} \coloneqq (\E[\abs{Z}^q])^{1/q}$ for $1 \leq q < \infty$.
Moreover, we write $g \sim \mathcal{N}(0,I_p)$ if $g$ is a \emph{standard Gaussian random vector} in $\R^p$.

For $v \in \R^p$, we use the notation $v^*$ for the linear functional $\scpr{\cdot,v}$, i.e., $v^*$ is the image of~$v$ under the Riesz isomorphism; analogously, we write $A^*$ for the image of a subset $A \subset \R^p$ under the Riesz isomorphism. Furthermore, if $\R^p$ is equipped with a probability measure $\mu$, we can interpret $v^*$ as a random variable, i.e., $v^* = \scpr{x, v}$, where $x$ is distributed according to $\mu$. 
In particular, we have that $\norm{v^*}_{L^q}^q = \E[|\scpr{v,x}|^q]$ for $1 \leq q < \infty$.

\section{Main Results}
\label{sec:mainresults}

This section presents the main results of this work. We begin with several technical preliminaries in Subsection~\ref{subsec:mainresults:preliminaries}, including the central concept of generic Bernstein concentration (see Definition~\ref{def_gen_bern}) as well as the related complexity parameters (see Definition~\ref{def_q_complexity} and~\ref{def_m_complexity}).
The most general estimation guarantee is then formulated and discussed in Subsection~\ref{subsec:mainresults:localerrorbound} (see Theorem~\ref{thm_eg_local}).
This is followed by two corollaries in Subsection~\ref{subsec:mainresults:globalandconic}, employing simplified variants of our complexity parameters.
Note that all proofs for this section are postponed to Section~\ref{sec:proofs}.

\subsection{Preliminaries and Generic Bernstein Concentration}
\label{subsec:mainresults:preliminaries}

An error bound for the generalized Lasso \eqref{LS_K} is a statement about the minimizer of the following function:
\begin{definition}[Empirical risk, excess risk]\label{def_excrisk}
The objective function minimized in \eqref{LS_K}, i.e.,
\begin{equation}
	\bar{\mathcal{L}}(\beta) \coloneqq \tfrac{1}{n} \sum_{i=1}^n (y_i-\scpr{x_i,\beta})^2,
\end{equation}
is called the \emph{empirical risk} of $\beta \in K$. Given $\beta, \beta^\natural \in K$, we call
\begin{equation}
	\mathcal{E}(\beta,\beta^\natural) \coloneqq \bar{\mathcal{L}}(\beta)-\bar{\mathcal{L}}(\beta^\natural)
\end{equation}
the \emph{excess risk} of $\beta$ over $\beta^\natural$.
\end{definition}
Since the map $\beta \mapsto \bar{\mathcal{L}}(\beta)$ depends on the random pairs $(x_i,y_i)$, it can be seen as a stochastic process on the hypothesis set $K$. If the excess risk is strictly positive on a subset of $K$, the minimizer must be outside of this subset. In other words, we can localize the empirical risk minimizer in a certain set $L\subset K$ if we have a positive lower bound for the excess risk on $K \setminus L$ (see Fact~\ref{fact_excess} below). 
A powerful technique for proving such lower bounds is \emph{generic chaining} for stochastic processes (see \citep{tal06,tal14}). 

The following definition introduces a generic concentration inequality for linear functions on the parameter space, which leads to an increment condition for the involved stochastic processes. Based on this condition, we will use chaining arguments to derive a generic error bound for \eqref{LS_K} (see Theorem~\ref{thm_eg_local} and its proof in Subsection~\ref{subsec:proofs:thm_eg_local}). Estimation guarantees for specific classes of input vectors can be then obtained by considering concrete instances of this condition (see Section~\ref{sec:discussion}).
\begin{definition}[Generic Bernstein concentration]
\label{def_gen_bern}
Let $x\in \R^p$ be a random vector and let $\gennorm_g$ and $\gennorm_e$ be two semi-norms on $\R^p$. We say that $x$ exhibits \emph{generic Bernstein concentration} with respect to $(\gennorm_g,\gennorm_e)$ if for every $v \in \R^p$ and every $t \geq 0$, we have that
\begin{equation} \label{def_gen_bern_conc_ineq}
\Prob (|\scpr{x,v}| \geq t ) \leq 2 \exp\Big(- \min\Big\{\tfrac{t^2}{\norm{v}_g^2},\tfrac{t}{\norm{v}_e}\Big\}\Big),
\end{equation}
where $\exp(-\infty) \coloneqq 0$ and 
\begin{equation}
	\frac{t}{0} \coloneqq \begin{cases}\infty & \text{for } t>0, \\ 0 & \text{for } t=0.\end{cases}
\end{equation}
\end{definition}
The prototypical instance of generic Bernstein concentration is a centered random vector $x = (x_1, \dots, x_p) \in \R^p$ with independent, sub-exponential coordinates: indeed, such an $x$ exhibits generic Bernstein concentration with respect to $(\tfrac{R}{\sqrt{C_B}}\gennorm_2, \tfrac{R}{C_B}\gennorm_{\infty})$ where $R \coloneqq \max_{1 \leq j \leq p} \norm{x_j}_{\psi_1}$ and a universal constant $C_B > 0$. In this case, \eqref{def_gen_bern_conc_ineq} simply corresponds to the classical Bernstein's inequality (see Theorem~\ref{thm_bern_ineq}), justifying the terminology of Definition~\ref{def_gen_bern}. 
More generally, \eqref{def_gen_bern_conc_ineq} can be seen as an example of \emph{mixed-tail conditions}, which are quite common in the generic chaining literature, e.g., see~\citep[Thm.~3.5]{dir15} or \citep[Thm.~2.2.23]{tal14}. To be more specific, the semi-norm $\gennorm_g$ governs the Gaussian-like (`g') tail, while $\gennorm_e$ governs the exponential-like (`e') tail.

The central idea of generic chaining is that the expected infimum (or supremum) of a stochastic process depends on the ``size'' of the underlying index set, which is equipped with a \mbox{(pseudo-)}met\-ric that reflects the increment behavior of the stochastic process. For certain classes of canonical processes, the appropriate way of measuring the size is given by the well-known $\gamma$-functional:
\begin{definition}[\protect{$\gamma$-functional; \citep[Def.~2.2.19]{tal14}}]\label{def_gamma}
	Let $L$ be a set equipped with a pseudo-metric $d$. We call a sequence $(\mathcal{A}_s)_{s \in \N}$ of partitions\footnote{As usual, by a partition of $L$, we mean a family of pairwise disjoint, non-empty subsets of $L$ whose union is $L$.} of $L$ an \emph{admissible partition sequence} if $|\mathcal{A}_0|=1$ and $|\mathcal{A}_s| \leq 2^{2^s}$ for $s \geq 1$ and if the sequence is increasing, i.e., for every $A \in \mathcal{A}_{s+1}$ there is some $B \in \mathcal{A}_{s}$ with $A \subset B$. For $\alpha \in \{1,2\}$, we set
	\begin{equation}
		\gamma_\alpha(L,d) \coloneqq \inf \ \sup_{v \in L} \sum_{s \in \N} 2^{s/\alpha} \Delta(A_s(v)),
	\end{equation}
	where $A_s(v)$ is the unique set in $\mathcal{A}_s$ containing $v$ and the infimum is taken over all admissible partition sequences. In this work, we will only deal with pseudo-metrics induced by semi-norms. Hence, we may write $\gamma_\alpha(L,\gennorm) \coloneqq \gamma_\alpha(L,d_{\gennorm})$ where $d_{\gennorm}$ is the pseudo-metric induced by a semi-norm $\gennorm$.
\end{definition}

Returning to the issue of finding an error bound for \eqref{LS_K}, let us now fix some precision level $t > 0$ and an arbitrary \emph{target vector} $\beta^\natural \in K$ (see also Problem~\ref{intro:problem}). At the present level of abstraction, it is beneficial to leave the notion of the `estimation error' as general as possible. For the sake of mental convenience, $\beta^\natural$ can be seen as a desirable outcome of an estimation procedure (e.g., the expected risk minimizer on $K$), but this interpretation is mathematically irrelevant. The error measure that will concern us in this section is the Euclidean distance $\norm{\hat{\beta}-\beta^\natural}_2$, where $\hat{\beta} \in K$ is the estimate of the generalized Lasso, i.e., a minimizer of \eqref{LS_K}. Since $\mathcal{E}(\cdot,\beta^\natural)$ is a convex function (on $K$), one can make use of the following basic, yet important fact:
\begin{fact}\label{fact_excess}
	Let $K \subset \R^p$ be a convex set. For $\beta^\natural \in K$ and $t > 0$, we set
	\begin{equation}
		K_{\beta^\natural,t} \coloneqq \{\beta \in K \MID \norm{\beta-\beta^\natural}_2=t\} = K \cap (t\Sphere^{p-1} + \beta^\natural).
	\end{equation}
	If $\mathcal{E}(\beta,\beta^\natural)>0$ for all $\beta \in K_{\beta^\natural,t}$, then every minimizer $\hat{\beta}$ of \eqref{LS_K} satisfies the error bound $\norm{\hat{\beta}-\beta^\natural}_2<t$.
\end{fact}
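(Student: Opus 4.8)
The plan is to combine the convexity of the empirical risk with a ``radial localization'' argument along the segment joining $\beta^\natural$ to a putative far-away minimizer. The first step is to record that $\bar{\mathcal{L}}$ is a convex function on $\R^p$: each summand $\beta \mapsto (y_i-\scpr{x_i,\beta})^2$ is the composition of the convex map $s \mapsto s^2$ with the affine map $\beta \mapsto y_i-\scpr{x_i,\beta}$, hence convex, and a finite sum of convex functions is convex. In particular $\mathcal{E}(\cdot,\beta^\natural) = \bar{\mathcal{L}}(\cdot)-\bar{\mathcal{L}}(\beta^\natural)$ is convex.

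Next I would argue by contradiction. Suppose $\hat{\beta}$ is a minimizer of \eqref{LS_K} with $\norm{\hat{\beta}-\beta^\natural}_2 \geq t$. Since $K$ is convex and $\beta^\natural,\hat{\beta} \in K$, the segment $\{(1-\lambda)\beta^\natural + \lambda\hat{\beta} \MID \lambda \in [0,1]\}$ is contained in $K$. The map $\lambda \mapsto \norm{(1-\lambda)\beta^\natural + \lambda\hat{\beta}-\beta^\natural}_2 = \lambda\norm{\hat{\beta}-\beta^\natural}_2$ is continuous, equals $0$ at $\lambda=0$ and is $\geq t$ at $\lambda=1$, so by the intermediate value theorem there is some $\lambda_0 \in (0,1]$ for which $\tilde{\beta} \coloneqq (1-\lambda_0)\beta^\natural + \lambda_0\hat{\beta}$ satisfies $\norm{\tilde{\beta}-\beta^\natural}_2 = t$; that is, $\tilde{\beta} \in K_{\beta^\natural,t}$.

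The last step is to derive the contradiction. By hypothesis, $\mathcal{E}(\tilde{\beta},\beta^\natural) > 0$, i.e.\ $\bar{\mathcal{L}}(\tilde{\beta}) > \bar{\mathcal{L}}(\beta^\natural)$. On the other hand, convexity of $\bar{\mathcal{L}}$ gives $\bar{\mathcal{L}}(\tilde{\beta}) \leq (1-\lambda_0)\bar{\mathcal{L}}(\beta^\natural) + \lambda_0\bar{\mathcal{L}}(\hat{\beta})$, and since $\hat{\beta}$ minimizes $\bar{\mathcal{L}}$ over $K$ while $\beta^\natural \in K$, we have $\bar{\mathcal{L}}(\hat{\beta}) \leq \bar{\mathcal{L}}(\beta^\natural)$; combining these yields $\bar{\mathcal{L}}(\tilde{\beta}) \leq \bar{\mathcal{L}}(\beta^\natural)$, contradicting the strict inequality. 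Hence $\norm{\hat{\beta}-\beta^\natural}_2 < t$ for every minimizer, as claimed.

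I do not expect a genuine obstacle here; the only point that needs a moment's care is the production of the boundary point $\tilde{\beta}$, which is exactly where the convexity of $K$ enters (to keep the connecting segment feasible), together with the continuity --- indeed linearity --- of the radial distance $\lambda \mapsto \lambda\norm{\hat{\beta}-\beta^\natural}_2$ along that segment. Note also that if no minimizer of \eqref{LS_K} exists, the assertion is vacuous, so no additional hypotheses on $K$ (such as closedness or boundedness) are required.
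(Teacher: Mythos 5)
Your proof is correct and is exactly the argument the paper has in mind; the paper states this as a ``Fact'' without written proof precisely because it is the standard convexity/localization argument you give (move along the segment from $\beta^\natural$ to a putative far minimizer until you hit $K_{\beta^\natural,t}$, then contradict the strict positivity of the excess risk there via convexity of $\bar{\mathcal{L}}$ and minimality of $\hat{\beta}$). Your closing remark about vacuity when no minimizer exists is also apt, since no closedness or compactness of $K$ is assumed.
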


Consequently, it suffices to control $\mathcal{E}(\cdot,\beta^\natural)$ on the spherical subset $K_{\beta^\natural,t}$ of radius $t$ around~$\beta^\natural$.
To this end, we loosely follow the approach of \citet{men15} and decompose the excess risk as follows:
\begin{align}
\mathcal{E}(\beta,\beta^\natural) &=\tfrac{1}{n}\sum_{i=1}^n (y_i - \scpr{x_i,\beta})^2-\tfrac{1}{n}\sum_{i=1}^n(y_i - \scpr{x_i,\beta^\natural})^2 \\*
	&=\underbrace{\tfrac{1}{n} \sum_{i=1}^n\scpr{x_i,\beta-\beta^\natural}^2}_{\eqqcolon \mathcal{Q}(\beta-\beta^\natural)} + \underbrace{\tfrac{2}{n} \sum_{i=1}^n(\scpr{x_i,\beta^\natural}-y_i) \scpr{x_i, \beta-\beta^\natural}}_{\eqqcolon \mathcal{M}(\beta,\beta^\natural)}. \label{decomp_excess}
\end{align}
In this decomposition, the excess risk is expressed as a sum of two empirical processes $\mathcal{Q}(\beta-\beta^\natural)$ and $\mathcal{M}(\beta,\beta^\natural)$, both indexed by $\beta \in K_{\beta^\natural,t}$, which we call the \emph{quadratic process} and the \emph{multiplier process}, respectively. Note that this corresponds to a second-order Taylor expansion of $\mathcal{E}(\cdot,\beta^\natural)=\bar{\mathcal{L}}(\cdot)-\bar{\mathcal{L}}(\beta^\natural)$:
the quadratic process is the second-order term\footnote{Since the Hessian matrix $H_{\bar{\mathcal{L}}}(\beta^\natural) \in \R^{p \times p}$ is actually independent of $\beta^\natural$ for the squared loss, the quadratic process is translation-invariant in the sense that it only depends on $\beta-\beta^\natural$. Hence, we write $\mathcal{Q}(\beta-\beta^\natural)$ rather than $\mathcal{Q}(\beta, \beta^\natural)$.}
\begin{equation}
\mathcal{Q}(\beta-\beta^\natural)=\tfrac{1}{2}(\beta-\beta^\natural)^T H_{\bar{\mathcal{L}}}(\beta^\natural)(\beta-\beta^\natural)
\end{equation}
and the multiplier process is the first-order term
\begin{equation}
\mathcal{M}(\beta,\beta^\natural)=\scpr{(\nabla \bar{\mathcal{L}})(\beta^\natural),\beta-\beta^\natural}.
\end{equation}
With this notation at hand, the desired uniform lower bound $\mathcal{E}(\beta,\beta^\natural)>0$ amounts to the event that $\mathcal{Q}(\beta-\beta^\natural)$ dominates $-\mathcal{M}(\beta,\beta^\natural)$ on the whole index set $K_{\beta^\natural,t}$.

Based on the $\gamma$-functional, we now define two general complexity parameters, which are adapt\-ed to the analysis of the quadratic process and the multiplier process, respectively. Both parameters are tailored to the above notion of generic Bernstein concentration and have in common that they measure the complexity of a set \emph{locally}, i.e., at a certain scale $t>0$. This reflects the fact that we are only interested in the behavior of the empirical processes on $K_{\beta^\natural,t}$ and not on the full hypothesis set $K$.
\begin{definition}[Local q-complexity]\label{def_q_complexity} Let $L \subset \R^p$ and let $\gennorm_g$ and $\gennorm_e$ be semi-norms on $\R^p$. For $t>0$, we define the \emph{local q-complexity} of $L$ at scale $t$ and sample size $n$ with respect to $(\gennorm_g,\gennorm_e)$ by
\begin{equation}
	q_{t,n}^{(g,e)}(L) \coloneqq \frac{1}{t}\inf\Big\{\tfrac{\gamma_1(S, \gennorm_e)}{\sqrt{n}}+\gamma_2(S, \gennorm_g+\gennorm_e)\MID[\big] S \subset \R^p, \convh(S) \supset L \cap t\Sphere^{p-1} \Big\}.
\end{equation}
\end{definition}
\begin{figure}
	\hspace{\fill}\begin{tikzpicture}
	\path[fill, color=lightgray] 
	(0:3) to (0:0) to (60:3) to[bend left] (0:3);
	\draw[thick]  pic[black]{carc=0:60:1.5cm};
	\draw (0:3) to (0:0) to (60:3);
	
	\node at (0:0)[left,below] {$0$};
	\node at (30:2.25)[above] {$L$};
	\node at (0:2)[below] {$L \cap t \Sphere^{p-1}$};
	\draw[->] (-4:2) to[bend right] (30:1.575);
	\begin{scope}[shift={(-0.05,0.05)}]
	\draw [decorate,decoration={brace},rotate=0] (0:0) -- (60:1.5);
	\node at (80:0.53)[left,above] {$t$};
	\end{scope}
	\node at (0:0) [circle,fill,inner sep=0.2pt]{};
	\node at (90:2)[above] {(a)};
	\node at (-12:3) [circle,inner sep=0pt]{};
	\end{tikzpicture}\hspace{\fill}
	\begin{tikzpicture}
	\path[fill, color=lightgray] 
	(2:1.3*1.5) to (-5:0.8*1.5) to (62:0.8*1.5) to (63:1.3*1.5) to (2:1.3*1.5);
	\node at (2:1.3*1.5) [circle,fill,inner sep=0.5pt]{};
	\node at (-5:0.8*1.5) [circle,fill,inner sep=0.5pt]{};
	\node at (62:0.8*1.5) [circle,fill,inner sep=0.5pt]{};
	\node at (63:1.3*1.5) [circle,fill,inner sep=0.
	\node at (0:0) [circle,fill,inner sep=0.5pt]{};
	\draw[thick]  pic[black]{carc=0:60:1.5cm};
	\draw (0:3) to (0:0) to (60:3);
	
	\node at (0:0)[left,below] {$0$};
	\node at (90:2)[above] {(b)};
	\node at (-12:3) [circle,inner sep=0pt]{};
	
	\end{tikzpicture}\hspace{\fill}
	\begin{tikzpicture}
	
	\node at (2:1.3*1.5) [circle,fill,inner sep=0.5pt]{};
	\node at (-5:0.8*1.5) [circle,fill,inner sep=0.5pt]{};
	\node at (62:0.8*1.5) [circle,fill,inner sep=0.5pt]{};
	\node at (63:1.3*1.5) [circle,fill,inner sep=0.5pt]{};
	\draw[lightgray] (0:3) to (0:0) to (60:3);
	
	\node at (0:0) [circle,fill,inner sep=0.5pt]{};
	\node at (0:0)[left,below] {$0$};
	\node at (30:1.7*1.5) {$S$};
	\draw[thick]  pic[lightgray]{carc=0:60:1.5cm};
	\draw[->] (30:1.5*1.5) to[bend right] (4:1.28*1.5);
	\draw[->] (30:1.5*1.5) to[bend right] (-3:0.8*1.5);
	\draw[->] (30:1.5*1.5) to[bend left] (60:0.8*1.5);
	\draw[->] (30:1.5*1.5) to[bend left] (61:1.28*1.5);
	\node at (90:2)[above] {(c)};
	\node at (-12:3) [circle,inner sep=0pt]{};
	\end{tikzpicture}\hspace{\fill}
	\caption{An illustration of the local q-complexity $q_{t,n}^{(g,e)}(L)$ from Definition~\ref{def_q_complexity}: (a)~In order to measure the complexity of $L$ locally at scale $t$, we consider the set $L \cap t \Sphere^{p-1}$. (b)~$L \cap t \Sphere^{p-1}$ is contained in the convex hull of the four points indicated by small black dots. (c)~Defining $S$ as these four points, the quantity $\frac{1}{t} \cdot \big(\gamma_1(S, \gennorm_e) / \sqrt{n}+\gamma_2(S, \gennorm_g+\gennorm_e)\big)$ is an upper bound for the local q-complexity of $L$ at scale $t$, which is defined as the infimum over all such upper bounds.}
	\label{fig:skeleton}
\end{figure}
Remarkably, we do not simply measure the size of the set $L \cap t\Sphere^{p-1}$ in Definition~\ref{def_q_complexity}, but optimize over all ``skeletons'' $S$ of this set; see Figure~\ref{fig:skeleton} for an illustration and \citep[Appx.~A]{oym19} for a related approach in the literature.
\begin{definition}[Local m-complexity]\label{def_m_complexity}
Let $L \subset \R^p$ and let $\gennorm_g$ and $\gennorm_e$ be semi-norms on~$\R^p$. For $t>0$, we define the \emph{local m-complexity} of $L$ at scale $t$ with respect to $(\gennorm_g,\gennorm_e)$ by
\begin{equation}
	m_t^{(g,e)}(L) \coloneqq \frac{1}{t}\inf\Big\{\gamma_1(S, \gennorm_e)+\gamma_2(S, \gennorm_g)\MID[\big] S \subset \R^p, \convh(S) \supset (L \cap t \Sphere^{p-1}) \cup \{0\} \Big\}.
\end{equation}
\end{definition}
It is worth noting that in the well-understood case of sub-Gaussian sample data, the \mbox{q-}com\-plex\-ity and \mbox{m-}com\-plex\-ity can be both identified with the notion of \emph{local Gaussian width}; see also Subsection~\ref{subsec:discussion:sg} for more details.
In general, however, this simple geometric interpretation is no longer valid and the behavior of both parameters is highly non-trival. We will return to this important issue later in Subsection~\ref{subsec:discussion:compl}.

In order to control the quadratic process (in Subsection~\ref{subsec:proofs:quadraticprocess}), we will apply the \emph{small-ball method}, which is a powerful tool to establish uniform lower bounds for non-negative empirical processes (see \citep{km15,men15,men18}).
For this purpose, the notion of a small-ball function is required:
\begin{definition}[{Small-ball function; \citep[p.~12995]{km15}}]\label{def_q_theta}
Let $L \subset\nobreak \R^p$ and let $x$ be a random vector in $\R^p$. For $\theta \geq 0$, we define the \emph{small-ball function}
\begin{equation}
	Q_\theta(L,x) \coloneqq \inf_{v \in L} \Prob(|\scpr{x,v}| \geq \theta).
\end{equation}
\end{definition}

Since we are aiming at an error bound relative to an arbitrary target vector $\beta^\natural \in K$, it is natural that this error bound depends on how well the associated linear hypothesis $\scpr{x,\beta^\natural}$ predicts the actual output variable $y$ (which may depend on $x$ in a non-linear way). In other words, the estimation performance of \eqref{LS_K} is also affected by the behavior of the \emph{model mismatch} $y-\scpr{x,\beta^\natural}$, measuring how much $y$ deviates from the linear model $\scpr{x,\beta^\natural}$. 
The following parameters allow us to make this precise:
\begin{definition}[Mismatch parameters]\label{def_mismatch} Given $\beta^\natural \in \R^p$ and a random pair $(x,y) \in \R^{p} \times \R$, the \emph{mismatch deviation} of $\beta^\natural$ is defined by
	\begin{equation}
	\sigma(\beta^\natural) \coloneqq \norm{y-\scpr{x,\beta^\natural}}_{\psi_1}
	\end{equation}
	and the \emph{(global) mismatch covariance} of $\beta^\natural$ by
	\begin{equation}
	\rho(\beta^\natural) \coloneqq \norm[\big]{\E\big[(y-\scpr{x,\beta^\natural}) x\big]}_2.
	\end{equation}
	Moreover, for $t \geq 0$ and $K \subset \R^p$, we define the \emph{(local) mismatch covariance} of $\beta^\natural$ at scale $t$ by
	\begin{equation}
	\rho_t(\beta^\natural) \coloneqq \sup_{v \in K^t} \SCPR[\big]{\E\big[(y-\scpr{x,\beta^\natural}) x\big], v },
	\end{equation}
	where $K^t \coloneqq \tfrac{1}{t} (K - \beta^\natural) \cap \Sphere^{p-1}$ for $t > 0$ and $K^0 \coloneqq \cone{K-\beta^\natural} \cap \Sphere^{p-1}$.
\end{definition}
As the name suggests, the mismatch covariance captures the covariance between the input vector $x$ and the model mismatch $y-\scpr{x,\beta^\natural}$. 
Inspired by linear regression problems, it is useful to think of the model mismatch as ``noise'' that perturbs the linear model $\scpr{x,\beta^\natural}$. In particular, if $\E[(y-\scpr{x,\beta^\natural}) x] = 0$, this noise is uncorrelated with all input variables (but not necessarily independent), implying that $\rho(\beta^\natural) = \rho_t(\beta^\natural) = 0$.
In contrast, the mismatch deviation measures the sub-exponential tail behavior of the model mismatch.
Note that in the noisy linear case, i.e., $y = \scpr{x,\beta^\natural} + \nu$, we simply have that $\sigma(\beta^\natural) = \norm{\nu}_{\psi_1}$.
The interested reader is referred to Appendix~\ref{subsec:app:rem_on_mc} for further remarks on the above notions of the mismatch covariance.

\subsection{A Local Error Bound for \texorpdfstring{\eqref{LS_K}}{(LS\_K)}}
\label{subsec:mainresults:localerrorbound}

Before stating the error bound, let us formally summarize our assumptions about the sampling process:
\begin{assumption}[Model setup] \label{model_setup} Let $(x,y) \in \R^{p} \times \R$ be a joint random pair where $x \in \R^p$ satisfies generic Bernstein concentration with respect to $(\gennorm_g,\gennorm_e)$ and $y \in \R$ is sub-exponential. Moreover, let $K \subset \R^p$ be a convex hypothesis set. We define the set $K^\Delta \coloneqq \spann(K-K) \cap \Sphere^{p-1}$ and assume that $x$ satisfies the small-ball condition
\begin{equation}\label{small_ball_condition}
Q_{2\tau}(K^\Delta,x) > 0
\end{equation}
for some $\tau>0$. Finally, we assume that the observed sample pairs $(x_1,y_1),\dots,(x_n,y_n)$ are independent copies of $(x,y)$.
\end{assumption}
Although it can be helpful to imagine a semi-parametric relationship between $x$ and $y$ (see Subsection~\ref{subsec:discussion:semipara}), such an assumption is not required at the current level of abstraction. Indeed, our main result, which is presented next, provides a generic error bound for the generalized Lasso \eqref{LS_K} without any specific observation model.
\begin{theorem}[General error bound for \eqref{LS_K}, local version] \label{thm_eg_local} 
	Let Assumption~\ref{model_setup} be satisfied and fix a vector $\beta^\natural \in K$. Then there exists a universal constant $C>0$ such that for every $u \geq 8$ and $t \geq 0$, the following holds true with probability at least $1-5\exp(-C\cdot u^2)-2\exp(-C \cdot \sqrt{n})$: If the sample size obeys\footnote{For the case of exact recovery, i.e., $t = 0$, the corresponding complexity parameters $q_{0,n}^{(g,e)}(K-\beta^\natural)$ and $m_0^{(g,e)}(K-\beta^\natural)$ are introduced further below in Definition~\ref{def_conic_q_m_complexity} in Subsection~\ref{subsec:mainresults:globalandconic}.}
	\begin{equation} \label{thm_eg_local_condition_n}
		n \gtrsim \bigg(\frac{q_{t,n}^{(g,e)}(K-\beta^\natural)+ \tau \cdot u}{\tau\cdot Q_{2\tau}(K^\Delta,x)}\bigg)^2
	\end{equation}
	and we have that
	\begin{equation} \label{thm_eg_local_condition_t}
		t \gtrsim \frac{1}{(\tau\cdot Q_{2\tau}(K^\Delta,x))^2}\cdot\pospart[\bigg]{\rho_t(\beta^\natural) + u^2 \cdot \sigma(\beta^\natural)\cdot \frac{m_t^{(g,e)}(K-\beta^\natural)}{\sqrt{n}}},
	\end{equation}
	then every minimizer $\hat{\beta}$ of \eqref{LS_K} satisfies $\norm{\hat{\beta}-\beta^\natural}_2 \leq t$.
\end{theorem}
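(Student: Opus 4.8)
The plan is to combine Fact~\ref{fact_excess} with a uniform control of the two empirical processes in the decomposition \eqref{decomp_excess}. Fixing the target $\beta^\natural \in K$ and a scale $t > 0$ (the case $t = 0$ being treated analogously, with the conic complexity parameters of Definition~\ref{def_conic_q_m_complexity} in place of their local counterparts), it suffices to show that, on an event of the asserted probability, the excess risk satisfies $\mathcal{E}(\beta,\beta^\natural) > 0$ simultaneously for all $\beta \in K_{\beta^\natural,t}$. Via the decomposition \eqref{decomp_excess} into the non-negative quadratic process $\mathcal{Q}(\beta-\beta^\natural)$ and the multiplier process $\mathcal{M}(\beta,\beta^\natural)$, and parametrising $\beta = \beta^\natural + tv$ with $v \in K^t \subseteq K^\Delta$, the goal splits into two uniform estimates over $K_{\beta^\natural,t}$: a lower bound $\mathcal{Q}(\beta-\beta^\natural) \gtrsim (\tau\cdot Q_{2\tau}(K^\Delta,x))^2\, t^2$, and an upper bound $-\mathcal{M}(\beta,\beta^\natural) \lesssim t\,\rho_t(\beta^\natural) + u^2\,\sigma(\beta^\natural)\,t\,m_t^{(g,e)}(K-\beta^\natural)/\sqrt n$. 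Given both, $\mathcal{E}(\beta,\beta^\natural) \geq c\,(\tau Q_{2\tau}(K^\Delta,x))^2 t^2 - C\,t\,\pospart{\rho_t(\beta^\natural) + u^2\sigma(\beta^\natural)m_t^{(g,e)}(K-\beta^\natural)/\sqrt n}$, which is strictly positive precisely under condition \eqref{thm_eg_local_condition_t} (the positive part arising because a sufficiently negative $\rho_t(\beta^\natural)$ already dominates the multiplier term); Fact~\ref{fact_excess} then yields $\norm{\hat\beta - \beta^\natural}_2 \leq t$, and a union bound over the finitely many failure events gives the probability $1 - 5\exp(-Cu^2) - 2\exp(-C\sqrt n)$.

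For the quadratic lower bound I would apply the small-ball method of \citet{km15} and \citet{men15}. Bounding $\indprob{|\scpr{x_i,v}| \geq 2\tau}$ below by a $1$-Lipschitz surrogate $\psi$ with $\psi(0) = 0$ gives $\mathcal{Q}(\beta-\beta^\natural) \geq \tau^2 t^2 \cdot \tfrac1n\sum_{i=1}^n \psi\!\big(\tfrac{\scpr{x_i,v}}{\tau}\big)$, while the small-ball function supplies $\E[\psi(\scpr{x,v}/\tau)] \geq Q_{2\tau}(K^\Delta,x)$ for every $v \in K^\Delta$. It then remains to control $\sup_{v\in K^t}$ of the centred process $v \mapsto \tfrac1n\sum_i (\psi(\scpr{x_i,v}/\tau) - \E[\psi(\scpr{x,v}/\tau)])$. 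Symmetrisation and the contraction principle linearise this, up to the factor $\tau^{-1}$, into the empirical-width process $v \mapsto \tfrac1n\scpr{\sum_i \eps_i x_i,\, v}$, whose increments obey (by generic Bernstein concentration together with a Bernstein inequality for independent sums) a mixed tail governed by $(\sqrt n\,\gennorm_g,\,\gennorm_e)$. The generic-chaining bound of \citet{men16} applied to an arbitrary skeleton $S$ with $\convh(S) \supseteq (K-\beta^\natural)\cap t\Sphere^{p-1}$ (valid since a linear process attains its supremum on $S$, cf.\ \citet[Appx.~A]{oym19}) bounds the expectation by $\tfrac1{\tau t}\big(\tfrac{\gamma_2(S,\gennorm_g)}{\sqrt n} + \tfrac{\gamma_1(S,\gennorm_e)}{n}\big)$, hence after optimising over $S$ by $\lesssim q_{t,n}^{(g,e)}(K-\beta^\natural)/(\tau\sqrt n)$; the heavier, sub-exponential tails of the quadratic process produce the $\exp(-C\sqrt n)$ term and a concentration step the $\exp(-Cu^2)$ terms and the additive $\tau u$ in \eqref{thm_eg_local_condition_n}. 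Under \eqref{thm_eg_local_condition_n} this deviation stays below $\tfrac12 Q_{2\tau}(K^\Delta,x)$, giving the desired lower bound on $\mathcal{Q}$.

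For the multiplier bound I would split $-\mathcal{M}(\beta,\beta^\natural) = \tfrac{2t}{n}\sum_i \xi_i\,\scpr{x_i, v}$ with $\xi_i = y_i - \scpr{x_i,\beta^\natural}$ into its mean and a fluctuation. The mean equals $2t\,\scpr{\E[(y - \scpr{x,\beta^\natural})x],\, v}$, whose supremum over $v \in K^t$ is $2t\,\rho_t(\beta^\natural)$ by definition of the local mismatch covariance. The fluctuation $v \mapsto \tfrac{2t}{n}\sum_i(\xi_i\scpr{x_i,v} - \E[\xi\scpr{x,v}])$ is again linear in $v$, so its supremum over $K^t$ is attained on a skeleton $S$; the generic-chaining bound for this multiplier process---pairing the sub-Gaussian semi-norm $\gennorm_g$ with $\gamma_2$ and the sub-exponential semi-norm $\gennorm_e$ with $\gamma_1$, and letting the scale $\sigma(\beta^\natural) = \norm{\xi}_{\psi_1}$ of the multiplier enter via the moment growth $\norm{\xi}_{L^q} \lesssim q\,\sigma(\beta^\natural)$---bounds it, with high probability, by $\lesssim u^2\,\sigma(\beta^\natural)\cdot \tfrac{t}{\sqrt n}\,m_t^{(g,e)}(K-\beta^\natural)$. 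Adding the mean gives the claimed upper bound on $-\mathcal{M}(\beta,\beta^\natural)$, and combining the two estimates closes the argument as in the first paragraph.

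I expect the chaining estimate for the multiplier fluctuation to be the principal difficulty. Its increments are products of a sub-exponential multiplier $\xi_i$ with a generic-Bernstein linear functional $\scpr{x_i,v}$ and hence are heavier-tailed than sub-exponential, so a black-box Bernstein bound is too crude and the usual sub-Gaussian multiplier-process machinery does not apply directly. Obtaining the clean form $\gamma_1(S,\gennorm_e) + \gamma_2(S,\gennorm_g)$---with the $1/\sqrt n$ normalisation of \emph{both} functionals and a deviation cost of only $u^2\sigma(\beta^\natural)$---requires an $L^q$-based chaining argument in the spirit of \citet{men16}, combined with the mixed-tail chaining of \citet{dir15} and \citet{tal14}. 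The remaining delicate points are to keep track of how the semi-norms $\gennorm_g,\gennorm_e$ and the sample size $n$ enter at different rates in the definitions of $q_{t,n}^{(g,e)}$ and $m_t^{(g,e)}$, and to carry the dependence on $\tau$ and $Q_{2\tau}(K^\Delta,x)$ consistently through both steps.
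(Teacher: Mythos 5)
Your overall architecture matches the paper's proof: reduce the claim to showing $\mathcal{E}(\beta,\beta^\natural)>0$ on $K_{\beta^\natural,t}$ via Fact~\ref{fact_excess}, decompose into quadratic and multiplier processes, bound $\mathcal{Q}$ from below by the small-ball method plus a mixed-tail chaining estimate (Proposition~\ref{prop_quadratic_bound}), bound $-\mathcal{M}$ by a Mendelson-type $L^q$-chaining argument together with the local mismatch covariance (Proposition~\ref{prop_multiplier_bound}), and handle $t=0$ with the conic complexities. Two of the mechanics you describe are off, though, and both affect where the quantities in the statement actually come from.

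First, the mixed tail you assign to the symmetrised-sum process is incorrect. For $v\mapsto\scpr{\sum_i\varepsilon_ix_i,v}$ the Gaussian-like parameter grows as $\sqrt n(\gennorm_g+\gennorm_e)$, not $\sqrt n\,\gennorm_g$: the $L^2$-norm of $\scpr{x,v}$, which governs the quadratic regime in Bernstein's inequality for i.i.d.\ sums, is of order $\norm{v}_g+\norm{v}_e$ by Lemma~\ref{lemma_bc_norm_bound}, and Lemma~\ref{lemma_bern_sum} shows that $\tfrac1{\sqrt n}\sum_i\varepsilon_ix_i$ satisfies generic Bernstein concentration with respect to $\bigl(C(\gennorm_g+\gennorm_e),\tfrac C{\sqrt n}\gennorm_e\bigr)$. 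This is precisely why the local q-complexity is built from $\gamma_2(S,\gennorm_g+\gennorm_e)$ and not $\gamma_2(S,\gennorm_g)$; the intermediate bound $\tfrac{\gamma_2(S,\gennorm_g)}{\sqrt n}+\tfrac{\gamma_1(S,\gennorm_e)}{n}$ that you state is not attainable as written. Second, the $2\exp(-C\sqrt n)$ in the failure probability is misattributed to the quadratic process. The small-ball step (Theorem~\ref{thm_msbm}) only contributes the $\exp(-u^2/2)$ deviation; the $\exp(-C\sqrt n)$ enters on the multiplier side, where the sub-exponential adaptation of Mendelson's bound (Theorem~\ref{thm_men_subexp_bound}) replaces his $L^q$-moment control of $\norm{(\xi_i)_{i=1}^n}_2$ by a Hanson--Wright-type concentration inequality (Theorem~\ref{thm_goetze}, Corollary~\ref{cor_xi_conc}). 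That adaptation, together with the key estimate $\tilde{\Lambda}_u(L,x)\lesssim u\cdot\gamma_1(L,\gennorm_e)+\gamma_2(L,\gennorm_g)$ proved under generic Bernstein concentration (Lemma~\ref{lemma_lambda_tilde}), is exactly the multiplier-process content you flag as ``the principal difficulty'' but leave unresolved.
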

The interpretation of the error bound established in Theorem~\ref{thm_eg_local} is not straightforward, since the right-hand side of \eqref{thm_eg_local_condition_t} depends on the precision level $t$ and the right-hand side of \eqref{thm_eg_local_condition_n} depends on both $t$ and $n$. 
But regardless of these implicit dependencies, the above statement has almost the same syntactic form as in the case of sub-Gaussian sample data, e.g., see~\citep[Thm.~3.6]{gen19}, and we can rely on the interpretation suggested there.
The following way of reading Theorem~\ref{thm_eg_local} is quoted from \citep[p.~41]{gen19}, except that the mathematical terms and the equation numbers have been altered accordingly:
\begin{quote}
	A convenient way to read the above statement is as follows: First, fix an estimation accuracy $t$ that can be tolerated. Then adjust the sample size $n$ and $\beta^\natural \in K$ such that \eqref{thm_eg_local_condition_n} and \eqref{thm_eg_local_condition_t} are both fulfilled (if possible at all). In particular, if $n$ is chosen such that \eqref{thm_eg_local_condition_t} just holds with equality (up to a constant), we obtain an error bound of the form
	\begin{equation}\label{thm_eg_local_condition_error}
		\norm{\hat{\beta}-\beta^\natural}_2 \lesssim \frac{1}{(\tau\cdot Q_{2\tau}(K^\Delta,x))^2}\cdot\pospart[\bigg]{\rho_t(\beta^\natural) + u^2 \cdot \sigma(\beta^\natural)\cdot \frac{m_t^{(g,e)}(K-\beta^\natural)}{\sqrt{n}}}.
	\end{equation}
\end{quote}
With that in mind, one might be tempted to think that not much changes when going beyond sub-Gaussianity---but this is far from being true.
The key difference becomes manifested in our generalized complexity parameters $q_{t,n}^{(g,e)}(K-\beta^\natural)$ and $m_t^{(g,e)}(K-\beta^\natural)$. In fact, their behavior can be significantly more complicated than in the sub-Gaussian case. We defer a more detailed discussion to Subsection~\ref{subsec:discussion:compl}, but also the applications in Subsection~\ref{subsec:discussion:sg}--\ref{subsec:discussion:liftedlasso} can be helpful for a better understanding of this issue.
Finally, several additional remarks on Theorem~\ref{thm_eg_local} and possible refinements can be found in Appendix~\ref{subsec:app:rem_main}.

\subsection{Global and Conic Error Bounds for \texorpdfstring{\eqref{LS_K}}{(LS\_K)}}
\label{subsec:mainresults:globalandconic}

The local complexity parameters in Theorem~\ref{thm_eg_local} lead to a fairly strong, but implicit error bound for \eqref{LS_K}.
In this subsection, we state two corollaries of Theorem~\ref{thm_eg_local} which achieve a better interpretability at the price of suboptimality. The first one replaces the local complexity terms by their (more pessimistic) conic versions:
\begin{definition}[Conic q- and m-complexity]\label{def_conic_q_m_complexity} 
	Let $L \subset \R^p$ and let $\gennorm_g$ and $\gennorm_e$ be semi-norms on $\R^p$. We define the \emph{conic q-complexity} of $L$ at sample size $n$ with respect to $(\gennorm_g,\gennorm_e)$ by
\begin{equation}
	q_{0,n}^{(g,e)}(L) \coloneqq \inf\Big\{\tfrac{\gamma_1(S, \gennorm_e)}{\sqrt{n}}+\gamma_2(S, \gennorm_g+\gennorm_e)\MID[\big] S \subset \R^p,\convh(S) \supset \cone{L} \cap  \Sphere^{p-1} \Big\}.
\end{equation}
Similarly, we define the \emph{conic m-complexity} of $L$ with respect to $(\gennorm_g,\gennorm_e)$ by
\begin{equation}
m_0^{(g,e)}(L) \coloneqq \inf\Big\{\gamma_1(S, \gennorm_e)+\gamma_2(S, \gennorm_g)\MID[\big] S \subset \R^p, \convh(S) \supset (\cone{L} \cap  \Sphere^{p-1}) \cup \{0\} \Big\}.
\end{equation}
\end{definition}
The subscript `0' in $q_{0,n}^{(g,e)}(L)$ and $m_0^{(g,e)}(L)$ indicates that one can imagine the conic q- and m-complexity as the limit case $t = 0$ of their local counterparts from Definition~\ref{def_q_complexity} and~\ref{def_m_complexity}.

The conic complexity parameters allow us to remove the dependence of the right-hand sides of \eqref{thm_eg_local_condition_n} and \eqref{thm_eg_local_condition_t} on $t$ in Theorem~\ref{thm_eg_local}:
\begin{corollary}[General error bound for \eqref{LS_K}, conic version] \label{cor_eg_conic} 
	The assertion of Theorem~\ref{thm_eg_local} remains valid if $q_{t,n}^{(g,e)}(K-\beta^\natural)$ is replaced by $q_{0,n}^{(g,e)}(K-\beta^\natural)$ in \eqref{thm_eg_local_condition_n}, while $m_t^{(g,e)}(K-\nobreak\beta^\natural)$ and $\rho_t(\beta^\natural)$ are replaced by $m_0^{(g,e)}(K-\beta^\natural)$ and $\rho_0(\beta^\natural)$ in \eqref{thm_eg_local_condition_t}, respectively.
\end{corollary}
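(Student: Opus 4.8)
The plan is to derive Corollary~\ref{cor_eg_conic} from Theorem~\ref{thm_eg_local} by a pure monotonicity argument, \emph{without} re-running any chaining machinery. The point is that, for every scale $t>0$, the conic complexity parameters dominate their local counterparts,
\[
q_{t,n}^{(g,e)}(K-\beta^\natural)\le q_{0,n}^{(g,e)}(K-\beta^\natural),\qquad m_t^{(g,e)}(K-\beta^\natural)\le m_0^{(g,e)}(K-\beta^\natural),
\]
and that $\rho_t(\beta^\natural)\le\rho_0(\beta^\natural)$ is already recorded in \eqref{eq:mismatchcovar:loc-vs-glob}. Since every quantity entering the right-hand sides of \eqref{thm_eg_local_condition_n} and \eqref{thm_eg_local_condition_t} is non-negative and $s\mapsto\pospart{s}$ is non-decreasing, enlarging $q_{t,n}^{(g,e)}$ to $q_{0,n}^{(g,e)}$ in \eqref{thm_eg_local_condition_n}, and $m_t^{(g,e)}$, $\rho_t(\beta^\natural)$ to $m_0^{(g,e)}$, $\rho_0(\beta^\natural)$ in \eqref{thm_eg_local_condition_t}, only makes these two conditions harder to satisfy; hence the conic versions of the conditions imply the local ones. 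As the small-ball condition \eqref{small_ball_condition} and the factor $Q_{2\tau}(K^\Delta,x)$ are left untouched, Assumption~\ref{model_setup} holds verbatim, and the conclusion $\norm{\hat{\beta}-\beta^\natural}_2\le t$ follows from Theorem~\ref{thm_eg_local} with the same probability. (For $t=0$ there is nothing to prove, since---by the footnote to Theorem~\ref{thm_eg_local}---the parameters appearing there are by convention already the conic ones $q_{0,n}^{(g,e)}$, $m_0^{(g,e)}$, and $\rho_0(\beta^\natural)$.)

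It remains to establish the two displayed inequalities, which I would do via the following elementary observations. First, the geometric containment: any $v\in(K-\beta^\natural)\cap t\Sphere^{p-1}$ satisfies $v/t\in\cone{K-\beta^\natural}\cap\Sphere^{p-1}$, so $(K-\beta^\natural)\cap t\Sphere^{p-1}\subseteq t\cdot\big(\cone{K-\beta^\natural}\cap\Sphere^{p-1}\big)$, and the same holds after adjoining $\{0\}$ to both sides. Second, the homogeneity of the $\gamma$-functional: directly from Definition~\ref{def_gamma}, the dilation $S\mapsto tS$ maps admissible partition sequences to admissible partition sequences and multiplies every diameter $\Delta(A_s(v))$ by $t$, so $\gamma_\alpha(tS,\gennorm)=t\,\gamma_\alpha(S,\gennorm)$ for $\alpha\in\{1,2\}$ and any semi-norm $\gennorm$ (in particular for $\gennorm_e$ and $\gennorm_g+\gennorm_e$). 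Combining the two: if $S_0\subset\R^p$ is any ``skeleton'' admissible in the infimum for $q_{0,n}^{(g,e)}(K-\beta^\natural)$, i.e.\ $\convh(S_0)\supseteq\cone{K-\beta^\natural}\cap\Sphere^{p-1}$, then $tS_0$ is admissible in the infimum for $q_{t,n}^{(g,e)}(K-\beta^\natural)$, because $\convh(tS_0)=t\,\convh(S_0)\supseteq t\cdot\big(\cone{K-\beta^\natural}\cap\Sphere^{p-1}\big)\supseteq(K-\beta^\natural)\cap t\Sphere^{p-1}$; the prefactor $\tfrac1t$ in Definition~\ref{def_q_complexity} then cancels the dilation factor $t$, and taking the infimum over all such $S_0$ yields $q_{t,n}^{(g,e)}\le q_{0,n}^{(g,e)}$. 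The bound $m_t^{(g,e)}\le m_0^{(g,e)}$ is proved identically, using that $\{0\}\subseteq\convh(S_0)$ is preserved under $S_0\mapsto tS_0$.

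I do not anticipate a real obstacle; the only slightly delicate ingredient is the scaling identity $\gamma_\alpha(tS,\gennorm)=t\,\gamma_\alpha(S,\gennorm)$ together with the fact that dilation transports both ``admissibility of a partition sequence'' and ``$\convh(S)\supseteq\,\cdot\,$'' containments---these are exactly what make the cancellation of the $\tfrac1t$ normalisation go through. The rest is bookkeeping: tracking which term in \eqref{thm_eg_local_condition_n} and \eqref{thm_eg_local_condition_t} gets replaced, and invoking monotonicity of the positive part $\pospart{\cdot}$.
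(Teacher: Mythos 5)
Your proposal is correct and matches the paper's own proof in all essentials: both reduce Corollary~\ref{cor_eg_conic} to the monotonicity inequalities $q_{t,n}^{(g,e)}\le q_{0,n}^{(g,e)}$, $m_t^{(g,e)}\le m_0^{(g,e)}$, $\rho_t\le\rho_0$, and both establish the complexity inequalities by the same homogeneity mechanism. The only (cosmetic) difference is direction of travel: the paper rescales the definition of $q_{t,n}^{(g,e)}$ so that it becomes an infimum over the constraint $\convh(S)\supset\tfrac1t L\cap\Sphere^{p-1}$ and then notes this is a strictly weaker constraint than the conic one, whereas you dilate each conic-admissible skeleton $S_0\mapsto tS_0$ to produce a local-admissible skeleton with the same objective value; these are two phrasings of the same homogeneity argument and neither buys more than the other.
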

While leading to an explicit error bound for the generalized Lasso (cf.~\eqref{thm_eg_local_condition_error}), Corollary~\ref{cor_eg_conic} has the following drawback: If $\beta^\natural$ is an interior point of $K$, then $\cone{K-\beta^\natural}=\R^p$, and the complexity terms $q_{0,n}^{(g,e)}(K-\beta^\natural)$ and $m_0^{(g,e)}(K-\beta^\natural)$ are equal to $q_{0,n}^{(g,e)}(\R^p)$ and $m_0^{(g,e)}(\R^p)$, respectively, i.e., they no longer reflect any complexity reduction due to the restricted hypothesis set~$K$.
Hence, unless the hypothesis set $K$ is perfectly tuned such that $\beta^\natural$ is located on the boundary of~$K$, Corollary~\ref{cor_eg_conic} fails to provide a useful estimation guarantee in the high-dimensional regime $p \gg n$. Evidently, this tuning problem affects the local error bound in Theorem~\ref{thm_eg_local} as well, but the situation is much less severe there, at least when $\beta^\natural$ is close to the boundary of $K$ (more precisely, if $\inf_{\beta \in \R^p\setminus K}\norm{\beta^\natural - \beta}_2<t$). This fact particularly explains why \eqref{LS_K} is a \emph{stable} estimator (cf.~\citep[Prop.~2.6 and Cor.~3.15]{gen19}).

Our second approach to simplify Theorem~\ref{thm_eg_local} is to measure the complexity of the hypothesis set ``globally'', rather than in a local neighborhood of $\beta^\natural$.
\begin{definition}[Global q- and m-complexity]\label{def_global_q_m_complexity}
Let $L \subset \R^p$ and let $\gennorm_g$ and $\gennorm_e$ be semi-norms on $\R^p$. We define the \emph{global q-complexity} of $L$ at sample size $n$ with respect to $(\gennorm_g,\gennorm_e)$ by
\begin{equation}
	q_n^{(g,e)}(L) \coloneqq \inf\Big\{\tfrac{\gamma_1(S, \gennorm_e)}{\sqrt{n}}+\gamma_2(S, \gennorm_g+\gennorm_e) \MID[\big] S \subset \R^p, \convh(S)\supset L\Big\}.
\end{equation}
Similarly, we define the \emph{global m-complexity} of $L$ with respect to $(\gennorm_g,\gennorm_e)$ by
\begin{equation}
	m^{(g,e)}(L) \coloneqq \inf\Big\{\gamma_1(S, \gennorm_e)+\gamma_2(S, \gennorm_g) \MID[\big] S \subset \R^p, \convh(S)\supset L\Big\}.
\end{equation}
\end{definition}
The following lemma provides some basic facts about the global complexity parameters and relates them to their local counterparts.
\begin{lemma}\label{lemma_q_and_m_global}
Let $L \subset \R^p$, $v \in \R^p$, and $t>0$. Then we have the following:
\begin{thmproperties}
	\item\label{lemma_q_and_m_global_1} $q_{t,n}^{(g,e)}(L) \leq \frac{1}{t} q_n^{(g,e)}(L)$ and $m_t^{(g,e)}(L) \leq \frac{1}{t} m^{(g,e)}(L \cup \{0\})$,
	\item\label{lemma_q_and_m_global_2}  $q_n^{(g,e)}(L)= q_n^{(g,e)}(L+v)$ and $m^{(g,e)}(L)=m^{(g,e)}(L+v)$,
	\item\label{lemma_q_and_m_global_3} $q_n^{(g,e)}(L) \lesssim m^{(g,e)}(L)$,
	\item\label{lemma_q_and_m_global_4} $q_{0,n}^{(g,e)}(L) = q_n^{(g,e)}(\cone{L}\cap \Sphere^{p-1})$ and $m_{0}^{(g,e)}(L) = m^{(g,e)}((\cone{L}\cap \Sphere^{p-1}) \cup \{0\})$,
	\item\label{lemma_q_and_m_global_5} $q_{t,n}^{(g,e)}(L) = q_n^{(g,e)}(\tfrac{1}{t}L\cap \Sphere^{p-1})$ and $m_{t}^{(g,e)}(L) = m^{(g,e)}((\tfrac{1}{t}L\cap \Sphere^{p-1}) \cup \{0\})$.
\end{thmproperties}
\end{lemma}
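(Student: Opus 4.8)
The plan is to deduce all five items from a short list of elementary properties of the $\gamma$-functional (Definition~\ref{def_gamma}): (i) \emph{translation invariance}, $\gamma_\alpha(S+v,\gennorm)=\gamma_\alpha(S,\gennorm)$; (ii) \emph{positive homogeneity}, $\gamma_\alpha(\lambda S,\gennorm)=\lambda\,\gamma_\alpha(S,\gennorm)$ for $\lambda>0$; (iii) \emph{monotonicity}, $\gamma_\alpha(S,\gennorm)\le\gamma_\alpha(S',\gennorm)$ whenever $S\subset S'$; (iv) the comparison $\gamma_2(S,\gennorm)\le\gamma_1(S,\gennorm)$; and (v) \emph{subadditivity in the metric}, $\gamma_\alpha(S,\gennorm_1+\gennorm_2)\lesssim\gamma_\alpha(S,\gennorm_1)+\gamma_\alpha(S,\gennorm_2)$. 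Properties (i)--(iv) are immediate from the definition: the pseudo-metric attached to a semi-norm is translation invariant and positively homogeneous, so the diameters $\Delta(A_s(v))$ entering Definition~\ref{def_gamma} transform accordingly under $S\mapsto S+v$ and $S\mapsto\lambda S$; (iii) holds because any admissible partition sequence of $S'$ restricts to one of $S$ (with no larger diameters); and (iv) holds because $2^{s/2}\le 2^s$. Property (v) is the one substantial ingredient; I would prove it (or cite the corresponding standard fact) by merging partition sequences: if $(\mathcal{B}_s)$ and $(\mathcal{C}_s)$ are near-optimal admissible sequences for $(S,\gennorm_1)$ and $(S,\gennorm_2)$, then $\mathcal{A}_0\coloneqq\{S\}$ and $\mathcal{A}_s\coloneqq\{B\cap C:B\in\mathcal{B}_{s-1},\,C\in\mathcal{C}_{s-1}\}$ for $s\ge1$ form an admissible sequence for $S$ (cardinalities multiply, $2^{2^{s-1}}\cdot2^{2^{s-1}}=2^{2^s}$), and using $\Delta_{\gennorm_1+\gennorm_2}(A)\le\Delta_{\gennorm_1}(A)+\Delta_{\gennorm_2}(A)$ together with a single index shift one obtains $\gamma_\alpha(S,\gennorm_1+\gennorm_2)\le(1+2^{1/\alpha})\bigl(\gamma_\alpha(S,\gennorm_1)+\gamma_\alpha(S,\gennorm_2)\bigr)$ after letting the slack tend to zero.

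Item~\ref{lemma_q_and_m_global_4} is then a tautology: the feasibility constraint $\convh(S)\supset\cone{L}\cap\Sphere^{p-1}$ in the definition of $q_{0,n}^{(g,e)}(L)$ is by definition the one defining $q_n^{(g,e)}$ of the set $\cone{L}\cap\Sphere^{p-1}$, and likewise for the $m$-versions (with the extra $\{0\}$). For item~\ref{lemma_q_and_m_global_2}, I note that $\convh(S)\supset L+v$ is equivalent to $\convh(S-v)\supset L$, so $S\mapsto S-v$ is a bijection between the admissible skeletons for $L+v$ and those for $L$; by (i) it leaves both objective functionals $\tfrac1{\sqrt n}\gamma_1(S,\gennorm_e)+\gamma_2(S,\gennorm_g+\gennorm_e)$ and $\gamma_1(S,\gennorm_e)+\gamma_2(S,\gennorm_g)$ unchanged, so the infima coincide. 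For item~\ref{lemma_q_and_m_global_5}, the elementary set identities $L\cap t\Sphere^{p-1}=t\,(\tfrac1t L\cap\Sphere^{p-1})$ and $(L\cap t\Sphere^{p-1})\cup\{0\}=t\,((\tfrac1t L\cap\Sphere^{p-1})\cup\{0\})$ show that $S\mapsto\tfrac1t S$ is a bijection between the admissible skeletons appearing in $q_{t,n}^{(g,e)}(L)$ (resp. $m_t^{(g,e)}(L)$) and those appearing in $q_n^{(g,e)}(\tfrac1t L\cap\Sphere^{p-1})$ (resp. $m^{(g,e)}((\tfrac1t L\cap\Sphere^{p-1})\cup\{0\})$); by (ii) this scales the objective by $\tfrac1t$, which exactly cancels the explicit prefactor $\tfrac1t$ in Definitions~\ref{def_q_complexity} and~\ref{def_m_complexity}. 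Item~\ref{lemma_q_and_m_global_1} follows at once: either directly, since $L\cap t\Sphere^{p-1}\subset L$ and $(L\cap t\Sphere^{p-1})\cup\{0\}\subset L\cup\{0\}$ enlarge the feasible set of skeletons and hence shrink the infimum, or by combining item~\ref{lemma_q_and_m_global_5} with monotonicity (iii) and the homogeneity $q_n^{(g,e)}(\lambda L)=\lambda\,q_n^{(g,e)}(L)$, $m^{(g,e)}(\lambda L)=\lambda\,m^{(g,e)}(L)$ (which themselves follow from (ii)).

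It remains to treat item~\ref{lemma_q_and_m_global_3}, the only one resting on property (v). Here the two infima defining $q_n^{(g,e)}(L)$ and $m^{(g,e)}(L)$ run over the \emph{same} feasible set $\{S\subset\R^p:\convh(S)\supset L\}$, so it suffices to bound, for every such $S$, the $q$-objective by a universal multiple of the $m$-objective. Using $n\ge1$ gives $\tfrac1{\sqrt n}\gamma_1(S,\gennorm_e)\le\gamma_1(S,\gennorm_e)$; applying (v) with $\alpha=2$ gives $\gamma_2(S,\gennorm_g+\gennorm_e)\lesssim\gamma_2(S,\gennorm_g)+\gamma_2(S,\gennorm_e)$; and (iv) gives $\gamma_2(S,\gennorm_e)\le\gamma_1(S,\gennorm_e)$. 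Adding these up yields $\tfrac1{\sqrt n}\gamma_1(S,\gennorm_e)+\gamma_2(S,\gennorm_g+\gennorm_e)\lesssim\gamma_1(S,\gennorm_e)+\gamma_2(S,\gennorm_g)$, and taking the infimum over all feasible $S$ gives $q_n^{(g,e)}(L)\lesssim m^{(g,e)}(L)$. The only real obstacle is property (v)---the merging-of-partitions argument---and, on a more pedestrian level, keeping careful track of the fact that neither the $\gamma$-functionals nor the complexity parameters are defined via attained infima, which I would handle throughout by passing to near-optimal partition sequences/skeletons and letting the approximation error vanish; all the remaining content is a change of variables ($S\mapsto S-v$ or $S\mapsto\tfrac1t S$) at the level of feasible skeletons.
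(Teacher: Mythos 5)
Your proof is correct and follows essentially the same route as the paper: feasible-set comparison for~\ref{lemma_q_and_m_global_1}, translation invariance of the induced pseudo-metrics for~\ref{lemma_q_and_m_global_2}, the merged-partition subadditivity bound $\gamma_2(S,\gennorm_g+\gennorm_e)\lesssim\gamma_2(S,\gennorm_g)+\gamma_2(S,\gennorm_e)$ together with $\gamma_2\le\gamma_1$ and $n\ge1$ for~\ref{lemma_q_and_m_global_3}, and a change of variables via positive homogeneity for~\ref{lemma_q_and_m_global_4} and~\ref{lemma_q_and_m_global_5}. The only cosmetic difference is that you spell out the elementary $\gamma$-functional properties in full, whereas the paper states~\ref{lemma_q_and_m_global_4} and~\ref{lemma_q_and_m_global_5} as following directly from the definitions and cites Talagrand's exercise for the subadditivity.
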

The second claim of Lemma~\ref{lemma_q_and_m_global} states that the global complexity parameters are translation-in\-vari\-ant.
This allows us to decouple the complexity terms in Theorem~\ref{thm_eg_local} from $\beta^\natural$, leading to the following error bound:
\begin{corollary}[General error bound for \eqref{LS_K}, global version] \label{cor_eg_global} 
Let Assumption~\ref{model_setup} be satisfied and fix a vector $\beta^\natural \in K$. Then there exists a universal constant $C>0$ such that for every $u \geq 8$, the following holds true with probability at least $1-5\exp(-C\cdot u^2)-2\exp(-C \cdot \sqrt{n})$:
If the sample size obeys
\begin{equation}\label{cor_eg_global_condition_n}
	n \gtrsim \bigg(\frac{q_n^{(g,e)}(K)+ \tau \cdot u}{\tau\cdot Q_{2\tau}(K^\Delta,x)}\bigg)^2,
\end{equation}
then every minimizer $\hat{\beta}$ of \eqref{LS_K} satisfies
\begin{equation}\label{cor_eg_global_bound_t}
	\norm{\hat{\beta}-\beta^\natural}_2 \lesssim \max\Big\{1,\big(\tau\cdot Q_{2\tau}(K^\Delta,x)\big)^{-2}\Big\} \cdot \pospart[\bigg]{\rho_0(\beta^\natural) + \max\{1,u^2 \cdot \sigma(\beta^\natural)\} \cdot \frac{\sqrt{m^{(g,e)}(K)}}{n^{1/4}}}.
\end{equation}
\end{corollary}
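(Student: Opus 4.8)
The plan is to deduce Corollary~\ref{cor_eg_global} from the local error bound of Theorem~\ref{thm_eg_local} by replacing the local complexity parameters with $t$-independent upper bounds and then choosing the free precision level $t$ equal to (a universal multiple of) the claimed error bound. Fix $\beta^\natural \in K$; since $0 = \beta^\natural-\beta^\natural \in K-\beta^\natural$, the set $(K-\beta^\natural)\cup\{0\}$ equals $K-\beta^\natural$, so combining Lemma~\ref{lemma_q_and_m_global}\ref{lemma_q_and_m_global_1} with the translation invariance from Lemma~\ref{lemma_q_and_m_global}\ref{lemma_q_and_m_global_2} gives, for every $t>0$,
\[
	q_{t,n}^{(g,e)}(K-\beta^\natural) \le \tfrac1t\,q_n^{(g,e)}(K)
	\qquad\text{and}\qquad
	m_t^{(g,e)}(K-\beta^\natural) \le \tfrac1t\,m^{(g,e)}(K),
\]
while Remark~\ref{rmk_mismatch} yields $\rho_t(\beta^\natural)\le\rho_0(\beta^\natural)$. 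Since the right-hand sides of \eqref{thm_eg_local_condition_n} and \eqref{thm_eg_local_condition_t} are monotone in the complexity parameters and in $\rho_t(\beta^\natural)$ (the latter via the truncation $\pospart{\cdot}$), it therefore suffices to exhibit one value $t>0$, of the order of the right-hand side of \eqref{cor_eg_global_bound_t}, for which \eqref{thm_eg_local_condition_n} and \eqref{thm_eg_local_condition_t} remain valid after these substitutions; Theorem~\ref{thm_eg_local} then gives $\norm{\hat\beta-\beta^\natural}_2\le t$, which is the assertion (the probability bound is unaffected, as it depends only on $u$ and $n$).

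Accordingly, I would set $t$ to a suitable universal multiple of $\max\{1,(\tau\cdot Q_{2\tau}(K^\Delta,x))^{-2}\}\cdot\pospart{\rho_0(\beta^\natural)+\max\{1,u^2\cdot\sigma(\beta^\natural)\}\cdot\sqrt{m^{(g,e)}(K)}/n^{1/4}}$ and verify the two conditions in turn. After inserting $m_t^{(g,e)}(K-\beta^\natural)\le m^{(g,e)}(K)/t$ and $\rho_t(\beta^\natural)\le\rho_0(\beta^\natural)$, condition \eqref{thm_eg_local_condition_t} becomes the scalar inequality $t\gtrsim(\tau\cdot Q_{2\tau}(K^\Delta,x))^{-2}\cdot\pospart{\rho_0(\beta^\natural)+u^2\cdot\sigma(\beta^\natural)\cdot m^{(g,e)}(K)/(t\sqrt n)}$; when the positive part is active this is quadratic in $t$, and its smaller admissible root is precisely of the stated form. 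The outer factors $\max\{1,(\tau\cdot Q_{2\tau}(K^\Delta,x))^{-2}\}$ and $\max\{1,u^2\cdot\sigma(\beta^\natural)\}$, together with the truncation $\pospart{\cdot}$, are exactly what makes a single $t$ work across the regimes where $\tau\cdot Q_{2\tau}(K^\Delta,x)$ is below or above $1$, where $u^2\cdot\sigma(\beta^\natural)$ is below or above $1$, and where $\rho_0(\beta^\natural)$ is non-negative or negative (including the degenerate case where the positive part vanishes, where $t=0$ is admissible and there is nothing to prove).

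The step I expect to be the main obstacle is verifying the sample-size condition \eqref{thm_eg_local_condition_n}, which after the substitution reads $n\gtrsim\big((q_n^{(g,e)}(K)/t+\tau u)/(\tau\cdot Q_{2\tau}(K^\Delta,x))\big)^2$ — note that the hypothesis \eqref{cor_eg_global_condition_n} is phrased in terms of $q_n^{(g,e)}(K)$ \emph{without} the factor $1/t$, which is large for the small $t$ we choose. The term $\tau u$ is handled directly by \eqref{cor_eg_global_condition_n}. For $q_n^{(g,e)}(K)/t$ one uses, on the one hand, that the chosen $t$ is bounded below by a constant multiple of $\max\{1,(\tau\cdot Q_{2\tau}(K^\Delta,x))^{-2}\}\cdot\sqrt{m^{(g,e)}(K)}/n^{1/4}$, and on the other hand the comparison $q_n^{(g,e)}(K)\lesssim m^{(g,e)}(K)$ from Lemma~\ref{lemma_q_and_m_global}\ref{lemma_q_and_m_global_3}; these should reduce the required estimate to the sample-size bound guaranteed by \eqref{cor_eg_global_condition_n}, with the factor $\max\{1,(\tau\cdot Q_{2\tau}(K^\Delta,x))^{-2}\}$ in $t$ doing the crucial work of absorbing the $\tau\cdot Q_{2\tau}(K^\Delta,x)$-dependence. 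Keeping careful track of all universal constants and of the $\tau$- and $Q_{2\tau}(K^\Delta,x)$-factors through this last comparison — and treating separately the harmless case where the purported error bound already exceeds the diameter of $K$, so that the conclusion is trivial — is the genuinely delicate part; once both conditions are checked for the chosen $t$, Theorem~\ref{thm_eg_local} yields the corollary.
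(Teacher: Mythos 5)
Your proposal follows the same route as the paper's own proof: set $t$ equal to a universal multiple of the claimed error bound, replace $q_{t,n}^{(g,e)}(K-\beta^\natural)$ and $m_t^{(g,e)}(K-\beta^\natural)$ by $t^{-1}q_n^{(g,e)}(K)$ and $t^{-1}m^{(g,e)}(K)$ via Lemma~\ref{lemma_q_and_m_global}\ref{lemma_q_and_m_global_1}--\ref{lemma_q_and_m_global_2}, replace $\rho_t(\beta^\natural)$ by $\rho_0(\beta^\natural)$, and then verify conditions~\eqref{thm_eg_local_condition_n} and~\eqref{thm_eg_local_condition_t} of Theorem~\ref{thm_eg_local}, with the sample-size condition handled exactly as you indicate through the comparison $q_n^{(g,e)}(K)\lesssim m^{(g,e)}(K)$ from Lemma~\ref{lemma_q_and_m_global}\ref{lemma_q_and_m_global_3} and the $\max\{1,(\tau\cdot Q_{2\tau}(K^\Delta,x))^{-2}\}$ factor absorbed into $t$. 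This is the paper's argument in the same order and with the same lemmas; the only differences are cosmetic (you flag the edge cases more explicitly than the paper does, and describe the resolution of~\eqref{thm_eg_local_condition_t} as solving a scalar quadratic, whereas the paper simply plugs in and checks).
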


If the complexity terms $q^{(g,e)}_n(K)$ and $m^{(g,e)}(K)$ are sufficiently small, then \eqref{cor_eg_global_bound_t} provides a useful error bound in the high-dimensional regime $p \gg n$, independently of the location of~$\beta^\natural$ in~$K$. Note that a prototypical application of Corollary~\ref{cor_eg_global} was already presented in Proposition~\ref{prop_intro} where $K$ is a convex polytope (see also Proposition~\ref{prop_polytopes} in Subsection~\ref{subsec:discussion:compl}). However, the simplification of Corollary~\ref{cor_eg_global} has its price: the second summand in the error bound \eqref{cor_eg_global} exhibits a decay rate $O(n^{-1/4})$, which is substantially worse that the rate of $O(n^{-1/2})$ achieved in Theorem~\ref{thm_eg_local} and Corollary~\ref{cor_eg_conic}. Moreover, the dependence on $\sigma(\beta^\natural)$ is suboptimal in the ``low-noise'' regime, i.e., when $\sigma(\beta^\natural) \ll 1$.

\section{Applications and Examples}
\label{sec:discussion}

This section is devoted to specific applications of the generic error bounds presented in Section~\ref{sec:mainresults}.
We begin with a discussion of semi-parametric estimation problems in Subsection~\ref{subsec:discussion:semipara}, in particular, how the generalized Lasso \eqref{LS_K} performs with non-linear output models.
In Subsection~\ref{subsec:discussion:sg}--\ref{subsec:discussion:liftedlasso}, we then demonstrate that generic Bernstein concentration covers a whole ``spectrum'' of relevant distributions, where (uniformly) sub-exponential and sub-Gaussian input vectors  appear just as marginal cases.
Finally, we continue our discussion on the notions of q- and m-complexity in Subsection~\ref{subsec:discussion:compl}, thereby focusing on the prototypical situation of sparse recovery via $\l{1}$-constraints.

\subsection{Semi-Parametric Estimation Problems and the Mismatch Principle}
\label{subsec:discussion:semipara}

We intentionally did not make a concrete choice of the target vector $\beta^\natural$ in Section~\ref{sec:mainresults}.
This strategy has led to very flexible (generic) error bounds for \eqref{LS_K}, but it does not address any specific estimation problem.
As already pointed out subsequently to the initial Problem~\ref{intro:problem}, a valid choice of $\beta^\natural$ is the expected risk minimizer.
Indeed, assuming that $x$ is isotropic and $\beta^\natural \coloneqq \E[yx] \in K$, then $\beta^\natural$ is the expected risk minimizer (on both~$K$ and~$\R^p$) and we have that $\rho(\beta^\natural) = \rho_t(\beta^\natural) = 0$ (see Appendix~\ref{subsec:app:rem_on_mc} and Figure~\ref{fig:mismatchcovar} there). Hence, according to Theorem~\ref{thm_eg_local} (or its corollaries), \eqref{LS_K} yields a consistent estimator of $\beta^\natural$.

While such a statement is common in statistical learning, a much less obvious phenomenon is the capability of \eqref{LS_K} to solve \emph{semi-parametric estimation problems}.
In the context of this article, we may express a semi-parametric observation model as follows:
\begin{equation}\label{eq_semipara}
	y = F(x, \beta_0),
\end{equation}
where $\beta_0 \in \R^p$ is an unknown \emph{parameter vector} and $F : (\R^p \times \R^p) \to \R$ a scalar \emph{output function} which can be non-linear, random, and unknown.
Agreeing on this model setup, the ultimate hope is now that \eqref{LS_K} is a (consistent) estimator of $\beta_0$.\footnote{If $F$ is non-linear, this can be very different from asking for the expected risk minimizer, which would simply yield the best linear predictor of $y$.}
It turns out that this is often possible at least to a certain extent, even though fitting a linear model to non-linear observations might appear counterintuitive at first sight.
A typical example is the simple classification rule $y = \sign(\scpr{x, \beta_0})$, where there is still hope to recover the direction of $\beta_0$, but not its magnitude. This limitation gives rise to a relaxed estimation problem:
\begin{problem}\label{problem_target}
	 Is the generalized Lasso \eqref{LS_K} capable of estimating any element from a certain \emph{target set} $T_{\beta_0} \subset \R^p$, which contains all those parameter vectors that allow us to extract the information of interest?
\end{problem}
Similarly to the more general formulation of Problem~\ref{intro:problem}, the term `information' is left unspecified here and depends on what a user considers as a desirable outcome of an estimation procedure. In the above example of binary classification, a natural choice of target set would be $T_{\beta_0} \coloneqq \spann(\{\beta_0\})$, if one is interested in the recovery of \emph{any} scalar multiple of $\beta_0$.

Our guarantees from Section~\ref{sec:mainresults} allow us to tackle Problem~\ref{problem_target} in a very systematic way:
\begin{highlight}
	Select $\beta^\natural \in T_{\beta_0} \cap K$ such that the mismatch covariance $\rho(\beta^\natural)$ becomes as small as possible.\footnote{For the sake of clarity, we only consider the global mismatch covariance here, which is easier to interpret and forms an upper bound for $\rho_t(\beta^\natural)$ according to Appendix~\ref{subsec:app:rem_on_mc}; but refinements are certainly possible when analyzing $\rho_t(\beta^\natural)$ instead of $\rho(\beta^\natural)$.}\textsuperscript{,}\footnote{If $x$ is isotropic, this selection procedure has a nice geometric interpretation due to Appendix~\ref{subsec:app:rem_on_mc}: The mismatch covariance $\rho(\beta^\natural)$ is minimized on $T_{\beta_0} \cap K$ if and only if $\beta^\natural$ is a Euclidean projection of the (global) expected risk minimizer $\E[yx]$ onto $T_{\beta_0} \cap K$.}
	Then apply Theorem~\ref{thm_eg_local} (or one of its corollaries) to obtain an error bound for the estimation error $\norm{\hat{\beta}-\beta^\natural}_2$.
\end{highlight}
This strategy ensures that the resulting target vector $\beta^\natural$ encodes the desired information, while the (asymptotic) bias of \eqref{LS_K} is brought under control.
In particular, if $\rho(\beta^\natural) = 0$, we achieve a consistent estimator of $\beta^\natural$; note that the corresponding mismatch deviation~$\sigma(\beta^\natural)$ can still be large, but its size only affects the variance of the error $\norm{\hat{\beta}-\beta^\natural}_2$.
The approach just described was developed by~\citet[Chap.~4]{gen19}, where it is referred to as the \emph{mismatch principle} (see also the technical report~\citep{gen18}).
It is worth pointing out that there is an important conceptual difference to the ``naive'' idea of first explicitly computing the expected risk minimizer (on~$K$) and then finding the closest point on the target set~$T_{\beta_0}$: indeed, we measure the complexity of $K$ locally at $\beta^\natural$, which enables us to exploit beneficial geometric features directly on $T_{\beta_0}$.

We refer the reader to \citep[Chap.~4]{gen19} for a more extensive discussion of the mismatch principle and various applications to semi-parametric estimation problems.
In the present work, we confine ourselves to an illustration in the prototypical situation of single-index models.
\begin{proposition}[\protect{\citep[Prop.~4.6]{gen19}}] \label{prop_mismatch_sim}
	Let $x \in \R^p$ be a centered, isotropic random vector. We assume that $y$ obeys a \emph{single-index model} of the form
	\begin{equation}\label{prop_mismatch_sim:sim}
		y = f(\scpr{x, \beta_0}) + \nu,
	\end{equation}
	where $\beta_0 \in \R^p\setminus \{0\}$ is an unknown parameter vector, $f: \R \to \R$ is a scalar output function, and $\nu$ is independent noise with $\E[\nu] = 0$.
	Moreover, we choose $T_{\beta_0} \coloneqq \spann(\{\beta_0\})$ as target set. Then $\beta^\natural = \mu \beta_0$ with
	\begin{equation}\label{prop_mismatch_sim:mu}
		\mu \coloneqq \tfrac{1}{\norm{\beta_0}_2^2} \cdot \E[f(\scpr{x, \beta_0}) \cdot \scpr{x, \beta_0}]
	\end{equation}
	minimizes the (global) mismatch covariance over $T_{\beta_0}$ and we have that
	\begin{equation}
		\rho(\beta^\natural) = \norm[\big]{\E[f(\scpr{x, \beta_0}) P_{\beta_0}^\perp x]}_2,
	\end{equation}
	where $P_{\beta_0}^\perp \in \R^{p \times p}$ is the projection onto the orthogonal complement of $\spann(\{\beta_0\})$. In particular, if $x$ is a standard Gaussian random vector, we have that $\rho(\beta^\natural) = 0$.
\end{proposition}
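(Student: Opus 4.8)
The plan is to reduce the whole statement to an elementary one-dimensional least-squares computation, exploiting the isotropy of~$x$ and the independence of the noise~$\nu$. First I would write out the global mismatch covariance for a generic element $s\beta_0 \in T_{\beta_0}$ with $s \in \R$:
\[
	\rho(s\beta_0) = \norm[\big]{\E[(y - s\scpr{x,\beta_0})x]}_2 = \norm[\big]{\E[yx] - s\cdot\E[\scpr{x,\beta_0}x]}_2.
\]
Two simplifications apply to the two expectations on the right. Since $x$ is isotropic, $\E[\scpr{x,\beta_0}x] = \E[xx^T]\beta_0 = \beta_0$. Since $\nu$ is independent of~$x$ with $\E[\nu]=0$, the noise contribution vanishes, so $\E[yx] = \E[f(\scpr{x,\beta_0})x] + \E[\nu]\E[x] = \E[f(\scpr{x,\beta_0})x] \eqqcolon w$ (here one uses an implicit integrability assumption on~$y$ and~$x$ so that $w$ is well defined). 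Hence $\rho(s\beta_0) = \norm{w - s\beta_0}_2$ for every $s \in \R$.

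Next I would observe that minimizing $s \mapsto \norm{w - s\beta_0}_2$ over $s \in \R$ is precisely the computation of the Euclidean distance from~$w$ to the line $\spann(\{\beta_0\})$; since $\beta_0 \neq 0$, the unique minimizer is the orthogonal projection $s^\ast\beta_0 = P_{\beta_0}w$, where $P_{\beta_0} = \beta_0\beta_0^T/\norm{\beta_0}_2^2$. Reading off the coefficient, $s^\ast = \scpr{w,\beta_0}/\norm{\beta_0}_2^2 = \E[f(\scpr{x,\beta_0})\scpr{x,\beta_0}]/\norm{\beta_0}_2^2 = \mu$, which gives the first claim $\beta^\natural = \mu\beta_0$. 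For the value of the covariance we then get $\rho(\beta^\natural) = \norm{w - P_{\beta_0}w}_2 = \norm{P_{\beta_0}^\perp w}_2$, and since $P_{\beta_0}^\perp$ is deterministic and linear it may be pulled inside the expectation, giving $\rho(\beta^\natural) = \norm[\big]{\E[f(\scpr{x,\beta_0})P_{\beta_0}^\perp x]}_2$, which is the second claim.

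Finally, for the Gaussian special case I would use the orthogonal decomposition $x = P_{\beta_0}x + P_{\beta_0}^\perp x$. When $x$ is a standard Gaussian vector, the components $P_{\beta_0}x$ and $P_{\beta_0}^\perp x$ are jointly Gaussian and uncorrelated, since $\E[(P_{\beta_0}x)(P_{\beta_0}^\perp x)^T] = P_{\beta_0}\E[xx^T]P_{\beta_0}^\perp = P_{\beta_0}P_{\beta_0}^\perp = 0$, and are therefore independent. Because $\scpr{x,\beta_0} = \scpr{P_{\beta_0}x,\beta_0}$, the variable $f(\scpr{x,\beta_0})$ is a function of $P_{\beta_0}x$ alone, hence independent of $P_{\beta_0}^\perp x$, so that $\E[f(\scpr{x,\beta_0})P_{\beta_0}^\perp x] = \E[f(\scpr{x,\beta_0})]\cdot\E[P_{\beta_0}^\perp x] = \E[f(\scpr{x,\beta_0})]\cdot P_{\beta_0}^\perp\E[x] = 0$, using $\E[x]=0$. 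I do not expect any genuine obstacle here; the only points requiring a little care are ensuring that the relevant first moments exist (so that $w$ and the expectations above are meaningful), and invoking the ``uncorrelated implies independent'' step only in the Gaussian regime, where it is valid but where it fails for a generic isotropic~$x$.
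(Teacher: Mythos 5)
Your proof is correct, and the present paper does not give its own proof of this statement but simply cites it from Genzel's earlier work; your reduction to a one-dimensional least-squares problem in $s$, followed by reading off the orthogonal projection onto $\spann(\{\beta_0\})$ and invoking the independence of $P_{\beta_0}x$ and $P_{\beta_0}^\perp x$ in the Gaussian case, is precisely the natural argument and matches the cited source in spirit. One small remark: it is worth noting explicitly that the minimizer $s^*$ is unique because $\beta_0 \neq 0$ (so the map $s \mapsto s\beta_0$ is injective), and that the integrability needed to make $w = \E[f(\scpr{x,\beta_0})x]$ well defined is a standing assumption throughout the paper's framework.
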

In the special case of a Gaussian input vector, Proposition~\ref{prop_mismatch_sim} reproduces the original finding of \citet{pv16}:
despite an unknown, non-linear distortion, the generalized Lasso  still allows for consistent estimation of the parameter vector, or at least a scalar multiple of it.
When combining Proposition~\ref{prop_mismatch_sim} with the results from Section~\ref{sec:mainresults} (for an appropriately tuned hypothesis set $K$), we observe that their conclusion remains essentially valid for non-Gaussian inputs as long as the mismatch covariance $\rho(\beta^\natural)$ vanishes or gets sufficiently small.
On the other hand, if $\rho(\beta^\natural)$ is too large, it can be useful to employ an adaptive estimator instead (e.g., see \citep{ywlez15,ybkl17,ybl17}), but there also exist worst-case scenarios where an asymptotic bias is inevitable, regardless of the considered estimator (see \citep{alpv14}).
For an overview of the extensive literature on single-index models as well as historical references, we refer the reader to \citep[Sec.~6]{pvy16} and \citep[Subsec.~1.2]{ybl17}.
Moreover, see~\citep[Subsec.~4.2.2]{gen19} and the references therein for related works in $1$-bit compressed sensing.

Finally, it is worth pointing out that we did not make any (explicit) assumptions on the tail behavior of the distribution of $x$ in this subsection.
Therefore, one can easily combine the above described approach with the findings of the forthcoming subsections, which investigate specific instances of generic Bernstein concentration.

\subsection{Sub-Gaussian Input Vectors}
\label{subsec:discussion:sg}

The current and subsequent subsections are devoted to several examples of generic Bernstein concentration (see Definition~\ref{def_gen_bern}).
Let us begin with the situation of (uniformly) sub-Gaussian input vectors. 
A characteristic property of sub-Gaussian random variables is that their tails are essentially not heavier than those of the normal distribution (see Proposition~\ref{prop_sg_se_properties}\ref{prop_sg_se_ci}). 
Combining this property with Definition~\ref{def_sg_se_randvect}, one can easily verify that a sub-Gaussian random vector $x \in \R^p$ with sub-Gaussian norm $\norm{x}_{\psi_2}$ exhibits generic Bernstein concentration with respect to $(C \norm{x}_{\psi_2} \gennorm_2,0)$ for a universal constant $C > 0$. 
Since the sub-exponential part of the mixed-tail condition is effectively erased here by setting $\gennorm_e \coloneqq 0$, we observe that sub-Gaussian input vectors form a degenerate limit case at the lighter-tailed end of the ``spectrum'' of generic Bernstein concentration. Regarding the q- and m-complexities, the identity $\gennorm_e = 0$ implies that the $\gamma_1$-functional effectively vanishes in their respective definitions, so that we end up with a rescaled version of the functional $\gamma_2(\cdot, \gennorm_2)$. The celebrated Majorizing Measure Theorem of \citet[Thm.~2.4.1]{tal14} relates this functional to a well-known complexity parameter:
\begin{definition}[Gaussian width]
Let $L \subset \R^p$ and let $g  \sim \mathcal{N}(0, I_p)$ be a standard Gaussian random vector. The \emph{Gaussian width} of $L$ is defined by
\begin{equation}\label{def_gaussian_width_eq}
	w(L) \coloneqq \E\Big[\sup_{v\in L} \scpr{g,v}\Big].
\end{equation}
\end{definition}
The Gaussian width originates from classical results in geometric functional analysis and asymptotic convex geometry, e.g., see~\citep{mil84,gor88,gm04}. More recently, it has emerged as a useful tool for the analysis of high-dimensional estimation problems, e.g., see~\citep{mpt07,rv08,sto09,crpw12,almt14,tro15,oh16}. The connection to our analysis, which is provided by Talagrand's Majorizing Measure Theorem, is  the fact that for every subset $L \subset \R^p$, we have
\begin{equation}\label{eq_talagrand_majorizing_measure}
\gamma_2(L, \gennorm_2) \asymp w(L).
\end{equation}
Apart from a simple geometric interpretation of the resulting complexity parameters, \eqref{eq_talagrand_majorizing_measure} implies that the optimization over the ``skeleton'' is irrelevant (up to constants) in the sub-Gaussian case, since the Gaussian width is invariant under taking the convex hull. This explains why such an optimization is uncommon in the literature dealing with sub-Gaussian input data.

The following fact summarizes the above considerations and allows us to relate the generic (global) error bound from Corollary~\ref{cor_eg_global} to the sub-Gaussian setting:
\begin{fact}\label{prop_sg}
		Let $x \in \R^p$ be a (uniformly) sub-Gaussian random vector, i.e., $\norm{x}_{\psi_2} <\nobreak \infty$.
		Then~$x$ exhibits generic Bernstein concentration with respect to $(\sqrt{C_2} \norm{x}_{\psi_2} \gennorm_2,0)$, where $C_2 > 0$ is the constant from Proposition~\ref{prop_sg_se_properties}\ref{prop_sg_se_ci}.
		The global q- and m-complexities satisfy
		\begin{align} 
			q_{n}^{(g,e)}(L) &\asymp \norm{x}_{\psi_2} \cdot \underbrace{\inf\Big\{\gamma_2(S,\gennorm_2) \MID[\big] S \subset \R^p, \convh(S) \supset L \Big\}}_{\eqqcolon q_{n}^{(2,0)}(L)}\asymp \norm{x}_{\psi_2} \cdot w(L),\\
			m^{(g,e)}(L) &\asymp \norm{x}_{\psi_2} \cdot \underbrace{\inf\Big\{\gamma_2(S,\gennorm_2) \MID[\big] S \subset \R^p, \convh(S) \supset L \Big\}}_{\eqqcolon m^{(2,0)}(L)} \asymp \norm{x}_{\psi_2} \cdot w(L). \label{prop_sg_eq}
		\end{align}
\end{fact}
The comparison of our results with existing ones is facilitated by introducing the \emph{normalized} complexities $q_{n}^{(2,0)}$ and $m^{(2,0)}$ in \eqref{prop_sg_eq}. Indeed, both parameters are unaffected by a rescaling of the input vector, which is a common feature of complexity measures defined in the literature.
In contrast, their ``unnormalized'' counterparts $q_{n}^{(g,e)}$ and $m^{(g,e)}$ ``absorb'' the norm $\norm{x}_{\psi_2}$ as a scalar pre-factor for the generic semi-norms $\gennorm_e$ and $\gennorm_g$.

With regard to the local and conic q- and m-complexities, Talagrand's Majorizing Measure Theorem leads to similar conclusions. The (normalized) local \mbox{q-}com\-plex\-ity corresponds to the notion of \emph{local Gaussian width} (cf.~\citep{pv16,gen16}):
\begin{equation}
q_{t,n}^{(2,0)}(L) \coloneqq \frac{1}{t}\inf\Big\{\gamma_2(S,\gennorm_2) \MID[\big] S \subset \R^p, \convh(S) \supset L \cap t\Sphere^{p-1} \Big\} \asymp \frac{1}{t} w(L \cap t\Sphere^{p-1}).
\end{equation}
Since the definition of the (normalized) local \mbox{m-}com\-plex\-ity requires that $\convh(S)$ also contains the origin, it is not strictly equivalent to the local Gaussian width, but incorporates an additional constant term (cf.~Appendix~\ref{subsec:app:rem_main}\ref{rmk_local_radius}):
\begin{equation}
m_{t}^{(2,0)}(L) \coloneqq \frac{1}{t}\inf\Big\{\gamma_2(S,\gennorm_2) \MID[\big] S \subset \R^p, \convh(S) \supset (L \cap t\Sphere^{p-1}) \cup \{0\} \Big\} \asymp \frac{1}{t} w(L \cap t\Sphere^{p-1}) +1.
\end{equation}
Analogously, the (normalized) conic \mbox{q-}com\-plex\-ity and m-complexity correspond to the notion of \emph{conic Gaussian width} (cf.~\citep{crpw12,almt14}). 
A combination of these identifications with the corresponding error bounds from Section~\ref{sec:mainresults} (Theorem~\ref{thm_eg_local}, Corollary~\ref{cor_eg_conic}, and Corollary~\ref{cor_eg_global}) allows us to reproduce known estimation guarantees for (sub-)Gaussian sample data, e.g., see~\citep{pv16,gen19}.

\subsection{Input Vectors with Independent Sub-Exponential Features}
\label{subsec:discussion:indep_se_features}

Although it is reassuring that the generic error bounds from Section~\ref{sec:mainresults} are consistent with existing results for the sub-Gaussian case, this setup does not constitute a proper example of the mixed-tail condition in Definition~\ref{def_gen_bern}. A more natural example is given by input vectors with centered, independent, sub-exponential coordinates. In this case, generic Bernstein concentration can be simply implemented by the classical Bernstein's inequality (see Theorem~\ref{thm_bern_ineq}). 
The following fact is a direct consequence of Theorem~\ref{thm_bern_ineq}.
\begin{fact}\label{prop_isef}
Let $x \in \R^p$ be a random vector with centered, independent, sub-exponential coordinates. Set $R \coloneqq \max_{1 \leq j \leq d} \norm{x_j}_{\psi_1}$. Then $x$ exhibits generic Bernstein concentration with respect to $\big(\frac{R}{\sqrt{C_B}}\norm{\cdot}_2, \frac{R}{C_B}\norm{\cdot}_\infty \big)$, where $C_B > 0$ is the constant from Theorem~\ref{thm_bern_ineq}.
The global q- and m-complexities satisfy
		\begin{align}
			q_{n}^{(g,e)}(L) & \asymp R \cdot \inf\Big\{\tfrac{\gamma_1(S, \norm{\cdot}_\infty)}{\sqrt{n}}+ \gamma_2(S, \norm{\cdot}_2) \MID[\big] S \subset \R^p, \convh(S) \supset L \Big\}, \\
			m^{(g,e)}(L) & \asymp R \cdot \underbrace{\inf\Big\{\gamma_1(S, \norm{\cdot}_\infty)+ \gamma_2(S, \norm{ \cdot}_2) \MID[\big] S \subset \R^p, \convh(S) \supset L \Big\}}_{\eqqcolon m^{(2,\infty)}(L)}.
		\end{align}
\end{fact}
Remarkably, there exists a geometric interpretation of $m^{(2,\infty)}$ that is very similar to \eqref{eq_talagrand_majorizing_measure}. To this end, let $L \subset \R^p$ and let $Y=(Y_1,\dots,Y_p)$ be a random vector with independent, symmetric coordinates satisfying $\Prob(|Y_j| \geq t) = \exp(-t)$ for all $j = 1, \dots, p$. By a result of \citet[Thm.~10.2.8]{tal14}, we then have
\begin{equation}\label{exponential_width_eq}
 \gamma_1(L,\gennorm_\infty)+\gamma_2(L, \gennorm_2) \asymp \E \Big[\sup_{v \in L} \scpr{v,Y}\Big].
\end{equation}
Inspired by the notion of `Gaussian' width, the expression on the right-hand side is referred to as the \emph{exponential width} of $L$.
Similarly to the sub-Gaussian case in Subsection~\ref{subsec:discussion:sg}, the relation \eqref{exponential_width_eq} shows that the optimization over the ``skeleton'' does not make a difference in the present scenario, at least when ignoring universal constants. Therefore, we can conclude that the normalized m-complexity is equivalent to the exponential width, i.e., $m^{(2,\infty)}(L) \asymp \E \big[\sup_{v \in L} \scpr{v,Y}\big]$. 

The idea of using the exponential width as a complexity measure for sub-exponential input vectors was proposed by \citet{sbr15}. In contrast to the Gaussian width, the exponential width is not rotation-invariant: only the sub-Gaussian component of the complexity is tied to the Euclidean structure on $\R^p$, whereas the sub-exponential component of the complexity is described by the $\l{\infty}$-norm. Based on results by Talagrand, it follows from \citep[Thm.~1]{sbr15} that for every subset $L \in \R^p$, we have
\begin{equation} \label{bound_exponential_width}
	m^{(2,\infty)}(L) \asymp \E \Big[\sup_{v \in L} \scpr{v,Y}\Big] \lesssim \sqrt{\log(p)} \cdot  w(L).
\end{equation} 
Regarding the feasibility of estimation in the high dimensions, this shows that the situation for input vectors with independent, sub-exponential features is not substantially worse than for sub-Gaussian sample data.

According to Lemma~\ref{lemma_q_and_m_global}\ref{lemma_q_and_m_global_3}, the normalized global q-complexity $q_n^{(2,\infty)}$ (defined analogously to $m^{(2,\infty)}$) can also be controlled by the exponential width. With this in mind, our lower bound for the quadratic process in Proposition~\ref{prop_quadratic_bound} resembles a result of~\citet[Thm.~3]{sbr15}, if we assume input vectors with independent, sub-exponential entries. In contrast, \citep[Thm.~3]{sbr15} just requires isotropy and uniform sub-exponentiality. Due to a lack of published proofs, we were unable to verify that the exponential width is the correct complexity measure for this more general scenario.

\subsection{(Uniformly) Sub-Exponential Input Vectors}
\label{subsec:discussion:se}

We were already concerned with the situation of (uniformly) sub-expo\-nen\-tial input vectors in Subsection~\ref{subsec:intro:simple} (see Proposition~\ref{prop_intro}). Taking the more abstract viewpoint from Section~\ref{sec:mainresults}, this setting corresponds to a degenerate limit case at the heavier-tailed end of the ``spectrum'' of generic Bernstein concentration. Indeed, an application of Proposition~\ref{prop_sg_se_properties}\ref{prop_sg_se_ci} leads to the following fact:
\begin{fact}\label{prop_se}
		Let $x \in \R^p$ be a (uniformly) sub-exponential random vector, that is, $\norm{x}_{\psi_1} <\nobreak \infty$.
		Then~$x$ exhibits generic Bernstein concentration with respect to $(0, C_1 \norm{x}_{\psi_1} \gennorm_2)$, where $C_1 > 0$ is the constant from Proposition~\ref{prop_sg_se_properties}\ref{prop_sg_se_ci}.
		The global q- and m-complexities satisfy
		\begin{align}
			q_{n}^{(g,e)}(L) & \asymp \norm{x}_{\psi_1} \cdot \underbrace{\inf\Big\{\tfrac{\gamma_1(S, \gennorm_2)}{\sqrt{n}}+ \gamma_2(S,\gennorm_2) \MID[\big] S \subset \R^p, \convh(S) \supset L \Big\}}_{\eqqcolon q_{n}^{(0,2)}(L)}, \\
			m^{(g,e)}(L) & \asymp \norm{x}_{\psi_1} \cdot \underbrace{\inf\Big\{\gamma_1(S, \gennorm_2) \MID[\big] S \subset \R^p, \convh(S) \supset L \Big\}}_{\eqqcolon m^{(0,2)}(L)}.
		\end{align}
\end{fact}
According to Talagrand's Majorizing Measure Theorem, we have that
\begin{equation} \label{eq_perturbed_width}
q_{n}^{(0,2)}(L) \asymp \inf\Big\{\tfrac{\gamma_1(S, \gennorm_2)}{\sqrt{n}}+ w(S) \MID[\big] S \subset \R^p, \convh(S) \supset L \Big\}.
\end{equation}
The right-hand side of \eqref{eq_perturbed_width} agrees with the notion of \emph{perturbed width} that was considered by \citep{oym19,so19}. Since \citet{so19} focus on the projected gradient descent as an algorithmic implementation of the generalized Lasso \eqref{LS_K}, their error bounds are not directly comparable to ours (in the special case of sub-exponential input vectors), but they bear a resemblance. A remarkable difference is that we achieve an exponentially decaying probability of failure. This is due to the fact that we handle the multiplier process by Mendelson's chaining approach (see Subsection~\ref{subsec:proofs:multiplierprocess}), which also explains why the notion of m-complexity does not appear in the results of~\citep{so19}.

Unlike in the settings of Subsection~\ref{subsec:discussion:sg} and~\ref{subsec:discussion:indep_se_features}, a simple geometric interpretation of the complexity parameters is not available in the case of uniformly sub-exponential input vectors. In particular, there is no reason to believe that the optimization over the ``skeleton'' is unnecessary in general. Compared to the $\gamma_2$-functional, which can be controlled by means of the Gaussian width, the $\gamma_1$-functional seems more mysterious and intangible.
However, it is at least possible to derive informative upper bounds in the important special case where~$K$ is a scaled $\l{1}$-ball (see Subsection~\ref{subsec:discussion:compl}).

\subsection{The Lifted Lasso and Phase Retrieval}
\label{subsec:discussion:liftedlasso}

Another relevant example of generic Bernstein concentration occurs when applying the so-called lifted Lasso to sub-Gaussian input vectors. The lifted Lasso introduced below can be seen as a variant of the phase lift approach (see \citep{cesv13,csv13}), which is tailored to the phase retrieval problem (e.g., see~\citep{seccms15} for an overview). Our statistical analysis is not limited to this specific model setup and covers the more general scenario considered by~\citet{tr19}, namely single-index models with \emph{even} output functions.
In fact, Proposition~\ref{prop_mismatch_sim} indicates that this is a highly non-trivial task: if $y$ obeys \eqref{prop_mismatch_sim:sim} with Gaussian inputs and an even function $f$, then we would simply have $\mu = 0$, so that the ordinary Lasso \eqref{LS_K} fails to recover the direction of the parameter vector $\beta_0$.

\emph{Phase lifting} follows a different approach that allows us to reduce the non-linear phase retrieval problem to a more accessible linear problem.
It is based on the simple, yet crucial observation that
\begin{equation}\label{tr_observation}
	\scpr{x,\beta}^2=\tr(xx^T \beta \beta^T)=\scpr{xx^T, \beta \beta^T}_F \quad \text{ for all $x, \beta \in \R^p$},
\end{equation}
where $\tr(\cdot)$ denotes the trace and $\scpr{\cdot, \cdot}_F$ the Hilbert-Schmidt inner product.
The \emph{lifted Lasso} then corresponds to the following convex optimization problem:
\begin{equation} \label{LLS_H} \tag{LLS$_H$}
	\min_{B \in H} \ \tfrac{1}{n} \sum_{i=1}^n \Big(y_i-\scpr{x_ix_i^T-\E[x x^T],B}_F\Big)^2,
\end{equation}
where $H \subset \R^{p \times p}$ is a convex subset of the positive semidefinite cone in $\R^{p \times p}$ which contains all ``lifted'' hypotheses, i.e., $H \supset \{\beta\beta^T \MID \beta \in K\}$ for some $K \subset \R^p$.
Note that the centering term $\E[x x^T]$ in \eqref{LLS_H} is important to achieve consistency (see also Proposition~\ref{prop_quadratic_with_noise} below). 

If the input vector $x \in \R^p$ is sub-Gaussian, then $xx^T \in \R^{p \times p}$ is sub-exponential.\footnote{Here and in the following, the matrix space $\R^{p\times p}$ is canonically identified with $\R^{p^2}$. In particular, we interpret $xx^T$ as a random vector in $\R^{p^2}$, rather than a random matrix.} In this sense, the lifted Lasso is a typical application where sub-exponential vectors  occur naturally.
The connection between the lifted setting and the notion of generic Bernstein concentration is given by the Hanson-Wright inequality (cf.~\citep[Thm.~6.2.1]{ver18}):
\begin{fact} \label{prop_hw_ineq}
Let $x \in \R^p$ be a random vector with centered, independent, sub-Gaussian coordinates. Moreover, let $B \in \R^{p \times p}$ and set $R \coloneqq \max_{1 \leq j \leq p} \norm{x_j}_{\psi_2}$. Then $xx^T-\E[x x^T] \in \R^{p \times p}$ exhibits generic Bernstein concentration with respect to $\big(\frac{R^2}{\sqrt{C}}\gennorm_F,\frac{R^2}{C} \gennorm_{\op}\big)$, where $C > 0$ is a universal constant. 
In particular, the global q- and m-complexities satisfy\footnote{Note that the operator norm is absorbed by the Frobenius norm in the $\gamma_2$-part of the q-complexity, which is possible due to $\gennorm_F + \gennorm_{\op} \lesssim \gennorm_F$.}
 \begin{align}
	q_{n}^{(g,e)}(L) & \asymp R^2 \cdot \underbrace{\inf\Big\{\tfrac{\gamma_1(S, \gennorm_{\op})}{\sqrt{n}}+ \gamma_2(S,\gennorm_F) \MID[\big] S \subset \R^{p \times p}, \convh(S) \supset L \Big\}}_{\eqqcolon q_{n}^{(F,{\op})}(L)}, \\
	m^{(g,e)}(L) & \asymp R^2 \cdot \underbrace{\inf\Big\{\gamma_1(S, \gennorm_{\op}) +\gamma_2(S, \gennorm_F) \MID[\big] S \subset \R^{p \times p}, \convh(S) \supset L \Big\}}_{\eqqcolon m^{(F,{\op})}(L)}.
\end{align}
\end{fact}
It is noteworthy that a similar result can be achieved under different assumptions on~$x$. For instance, \citet[Thm.~2.5]{ada15} proves a Hanson-Wright inequality under the assumption of a convex concentration property, while \citet[Thm.~1.5]{jlpy20} provide a somewhat different version under the additional assumption of unit variance.
Alternatively, if the input vector $x$ is just uniformly sub-Gaussian, we can make use of a result by \citet[Prop.~2.7, Rem.~2.8]{zaj19} to obtain the following:
\begin{fact}\label{prop_zajkowski}
	Let $x \in \R^p$ be a centered, (uniformly) sub-Gaussian random vector and let $B \in \R^{p \times p}$. Then $xx^T-\E[xx^T]$ exhibits generic Bernstein concentration with respect to $(0,C\norm{x}_{\psi_2}^2 \gennorm_F)$, where $C > 0$ is a universal constant.
\end{fact}

All aforementioned results can be integrated into our framework. The following error bound is a direct application of Corollary~\ref{cor_eg_global} to the setup of Fact~\ref{prop_hw_ineq}:
\begin{corollary}[Error bound for \eqref{LLS_H}, global version] \label{prop_lifted_eg_global} 
	Let $x\in \R^p$ be as in Fact~\ref{prop_hw_ineq} and let~$y$ be sub-exponential. Let the sample pairs $(x_1,y_1),\dots,(x_n,y_n)$ be independent copies of $(x,y)$. Moreover, let $H \subset \R^{p \times p}$ be convex and fix $B^\natural \in H$. We also assume that
	\begin{equation}
		Q \coloneqq Q_{2\tau}(\spann(H-H) \cap \Sphere^{p^2-1},xx^T-\E[x x^T]) > 0.
	\end{equation}
	Then there exists a universal constant $C>0$ such that for every $u \geq 8$, the following holds true with probability at least $1-5\exp(-C\cdot u^2)-2\exp(-C \cdot \sqrt{n})$:
	If the sample size obeys
	\begin{equation}\label{prop_lifted_eg_global_condition_n}
		n \gtrsim  \bigg(\frac{R^2\cdot q_n^{(F,{\op})}(H)+ \tau \cdot u}{\tau\cdot Q}\bigg)^2,
	\end{equation}
	then every minimizer $\hat{B}$ of \eqref{LLS_H} satisfies
	\begin{equation}\label{prop_lifted_eg_global_condition_t}
		\norm{\hat{B}-B^\natural}_F \lesssim \max\big\{1,(\tau\cdot Q)^{-2}\big\} \cdot \pospart[\bigg]{\rho_0(B^\natural) + \max\{1,u^2 \cdot \sigma(B^\natural)\} \cdot R \cdot  \frac{\sqrt{m^{(F,{\op})}(H)}}{n^{1/4}}},
	\end{equation}
	where the mismatch parameters $\rho_0(B^\natural)$ and $\sigma(B^\natural)$ are defined with respect to the ``lifted'' random pair $(xx^T-\E[x x^T], y)$.
\end{corollary}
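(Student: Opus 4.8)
The plan is to recognize the lifted Lasso \eqref{LLS_H} as a plain instance of the generalized Lasso \eqref{LS_K}: under the canonical identification $\R^{p\times p}\cong\R^{p^2}$, in which the Frobenius inner product becomes the Euclidean one, \eqref{LLS_H} is exactly \eqref{LS_K} for the ``lifted'' random pair $(\tilde{x},y)$ with $\tilde{x}\coloneqq xx^T-\E[xx^T]\in\R^{p^2}$ and hypothesis set $H\subset\R^{p^2}$. The statement should then drop out of Corollary~\ref{cor_eg_global} applied to this pair, with the ambient dimension $p$ replaced by $p^2$.

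First I would verify Assumption~\ref{model_setup} for $(\tilde{x},y)$. Proposition~\ref{prop_hw_ineq} gives that $\tilde{x}$ exhibits generic Bernstein concentration with respect to $(\gennorm_g,\gennorm_e)\coloneqq\big(\tfrac{R^2}{\sqrt{C}}\gennorm_F,\tfrac{R^2}{C}\gennorm_{\op}\big)$; the variable $y$ is sub-exponential by hypothesis; $H$ is convex by hypothesis; and the small-ball requirement $Q_{2\tau}(H^\Delta,\tilde{x})>0$ with $H^\Delta\coloneqq\spann(H-H)\cap\Sphere^{p^2-1}$ is precisely the standing assumption $Q>0$ of the corollary. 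Invoking Corollary~\ref{cor_eg_global} with $K=H$, $\beta^\natural=B^\natural$, and the ``lifted'' mismatch parameters $\rho_0(B^\natural),\sigma(B^\natural)$ then yields the sample-size condition $n\gtrsim\big((q_n^{(g,e)}(H)+\tau u)/(\tau Q)\big)^2$ and the error bound $\norm{\hat{B}-B^\natural}_F\lesssim\max\{1,(\tau Q)^{-2}\}\cdot\pospart[\big]{\rho_0(B^\natural)+\max\{1,u^2\sigma(B^\natural)\}\cdot\sqrt{m^{(g,e)}(H)}/n^{1/4}}$.

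The only remaining work is to put the complexity terms into normalized form. Since the $\gamma$-functionals are positively homogeneous in the generating semi-norm (immediate from Definition~\ref{def_gamma}), $\gamma_1(S,\gennorm_e)=\tfrac{R^2}{C}\gamma_1(S,\gennorm_{\op})$; and because $\gennorm_{\op}\leq\gennorm_F$ we have $\gennorm_g+\gennorm_e\asymp R^2\gennorm_F$, hence $\gamma_2(S,\gennorm_g+\gennorm_e)\asymp R^2\gamma_2(S,\gennorm_F)$. Taking the infimum over skeletons $S$ (Definition~\ref{def_global_q_m_complexity}) gives $q_n^{(g,e)}(H)\asymp R^2 q_n^{(F,{\op})}(H)$ and, by the same token, $m^{(g,e)}(H)\asymp R^2 m^{(F,{\op})}(H)$; in particular $\sqrt{m^{(g,e)}(H)}\asymp R\sqrt{m^{(F,{\op})}(H)}$, which is where the lone factor $R$ in \eqref{prop_lifted_eg_global_condition_t} comes from. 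Substituting these equivalences into the two displays of the previous paragraph, and absorbing universal constants into $\gtrsim$ and $\lesssim$, produces \eqref{prop_lifted_eg_global_condition_n} and \eqref{prop_lifted_eg_global_condition_t}. There is no real obstacle here---this is a pure specialization---the only points deserving a bit of care being the bookkeeping behind the identification $\R^{p\times p}\cong\R^{p^2}$ (so that ``Frobenius'' matches ``Euclidean'' and $\tilde{x}$ is genuinely centered) and the equivalence $\gennorm_g+\gennorm_e\asymp R^2\gennorm_F$ used to fold $\gennorm_{\op}$ into $\gennorm_F$ inside the $\gamma_2$-term, exactly as flagged in the footnote to Proposition~\ref{prop_hw_ineq}.
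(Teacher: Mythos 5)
Your proposal is correct and takes exactly the approach the paper intends: the paper itself only states that this corollary is ``a direct application of Corollary~\ref{cor_eg_global} to the setup of Proposition~\ref{prop_hw_ineq}'' and gives no further proof. You have merely filled in the bookkeeping---checking Assumption~\ref{model_setup} for the lifted pair, applying Corollary~\ref{cor_eg_global}, and converting $q_n^{(g,e)}$ and $m^{(g,e)}$ to their normalized $(F,\op)$-forms via the equivalences already recorded in Proposition~\ref{prop_hw_ineq}---all of which is accurate.
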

Obviously, one can derive analogous estimation guarantees for the local and conic complexity parameters based on Theorem~\ref{thm_eg_local} and Corollary~\ref{cor_eg_conic}, respectively.
\begin{remark}
	From a practical viewpoint, the error bound for the lifted Lasso in Corollary~\ref{prop_lifted_eg_global} is only of indirect interest, since the actual goal is to construct an estimator for an appropriate target vector $\beta^\natural \in \R^p$ with $B^\natural = \beta^\natural (\beta^\natural)^T$.
	For this purpose, one may simply extract the rank-one component from a solution $\hat{B}$ to \eqref{LLS_H}. Indeed, let $\hat{\beta} \in \Sphere^{p-1}$ be a unit-norm eigenvector of $\hat{B}$ corresponding to the largest eigenvalue $\hat{\lambda}_1$ of $\hat{B}$ (recall that $\hat{B}$ is positive semidefinite). Then, on that same event as in Corollary~\ref{prop_lifted_eg_global}, the following error bound holds true:
	\begin{equation}
		\min\Big\{\norm{\hat{\lambda}_1\hat{\beta}-\beta^\natural}_2, \norm{\hat{\lambda}_1\hat{\beta} + \beta^\natural}_2 \Big\} \lesssim \min\Big\{\norm{\beta^\natural}_2, \tfrac{\text{Err}}{\norm{\beta^\natural}_2}\Big\},
	\end{equation}
	where $\text{Err}$ is the error term on the right-hand side of \eqref{prop_lifted_eg_global_condition_t}; see~\citep[Sec.~6]{csv13} and \citep[Subsec.~2.1]{tr19} for more details.
	In other words, $\hat{\lambda}_1\hat{\beta}$ is an estimator of either $\beta^\natural$ or $-\beta^\natural$, due to sign ambiguities.
\end{remark}
Since the error bound \eqref{prop_lifted_eg_global_condition_t} is affected by the (constant) additive term $\rho_0(B^\natural)$, it is natural to study situations where it vanishes. The following proposition concerns two model setups where this is the case. In conjunction with Corollary~\ref{prop_lifted_eg_global}, this shows that the lifted Lasso \eqref{LLS_H} can provide a consistent estimator for phase-retrieval-like problems.
\begin{proposition}
\begin{rmklist}
\item
	Let $x \in \R^p$ be a random vector with finite second moments (so that the centering term $\E[xx^T]$ is well-defined). We assume that $y$ obeys a quadratic observation model of the form
	\begin{equation}
		y = (\scpr{x,\beta_0}+\nu)^2,
	\end{equation}
	where $\beta_0 \in \R^p$ is an unknown parameter vector and $\nu$ is independent noise with $\E[\nu] = 0$. Then we have that $\rho_0(B^\natural)=0$ for $B^\natural \coloneqq \beta_0 \beta_0^T$.
\item
	Let $x \sim \mathcal{N}(0, I_p)$. We assume that $y$ obeys a single-index model the form
	\begin{equation}
		y = f(\scpr{x,\beta_0}) + \nu,
	\end{equation}
	where $\beta_0 \in \Sphere^{p-1}$ is an unknown parameter vector, $f: \R \to \R$ is a scalar output function, and $\nu$ is independent noise with $\E[\nu] = 0$.
	Set $\mu \coloneqq \tfrac{1}{2} \E[f(Z)(Z^2-1)]$ with $Z \sim \mathcal{N}(0, 1)$. Then, we have that $\rho_0(B^\natural)=0$ for $B^\natural \coloneqq \mu \beta_0 \beta_0^T$.
\end{rmklist}\label{prop_quadratic_with_noise}
\end{proposition}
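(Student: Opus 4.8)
The plan is to reduce both claims to the vanishing of a single matrix. Recall from Remark~\ref{rmk_mismatch} (in particular \eqref{eq:mismatchcovar:loc-vs-glob}) that $\rho_0(B^\natural) \le \rho(B^\natural)$, and that for the lifted random pair $(xx^T - \E[xx^T],y)$ the global mismatch covariance is $\rho(B^\natural) = \norm{M}_F$ with
\begin{equation*}
  M \coloneqq \E\Big[\big(y - \scpr{xx^T - \E[xx^T],\,B^\natural}_F\big)\,\big(xx^T - \E[xx^T]\big)\Big] \in \R^{p \times p}.
\end{equation*}
Since $\rho_0(B^\natural)$ is a supremum of values of the linear functional $v \mapsto \scpr{M,v}_F$, it suffices to show $M = 0$ in each setup. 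The first move is to invoke the lifting identity \eqref{tr_observation}: for $B^\natural = c\,\beta_0\beta_0^T$ one has $\scpr{xx^T - \E[xx^T],B^\natural}_F = c\big(\scpr{x,\beta_0}^2 - \E[\scpr{x,\beta_0}^2]\big)$, so the ``lifted residual'' $y - \scpr{xx^T - \E[xx^T],B^\natural}_F$ becomes an explicit scalar function of $\scpr{x,\beta_0}$ and $\nu$.

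For part~(1), substituting $y = (\scpr{x,\beta_0}+\nu)^2 = \scpr{x,\beta_0}^2 + 2\nu\scpr{x,\beta_0} + \nu^2$ and $c=1$ gives a lifted residual equal to $2\nu\scpr{x,\beta_0} + \nu^2 + \E[\scpr{x,\beta_0}^2]$. Multiplying by $xx^T - \E[xx^T]$ and taking expectations: the $2\nu\scpr{x,\beta_0}$ term contributes $2\,\E[\nu]\,\E\big[\scpr{x,\beta_0}(xx^T - \E[xx^T])\big] = 0$ by independence of $\nu$ and $x$ together with $\E[\nu]=0$, while the remaining two summands are scalars times $\E[xx^T - \E[xx^T]] = 0$. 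Hence $M = 0$. The only bookkeeping point here is that all these expectations exist, which holds under the integrability implicitly required for the mismatch parameters to be defined (finite second moments of $x$, finite $\E[\nu^2]$).

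For part~(2) I would exploit Gaussianity exactly as in the proof of Proposition~\ref{prop_mismatch_sim}. Now $\E[xx^T] = I_p$ and the $\nu$-term again drops out by independence and $\E[\nu]=0$, so it remains to show $\E\big[g(\scpr{x,\beta_0})\,(xx^T - I_p)\big] = 0$ for $g(t) \coloneqq f(t) - \mu(t^2-1)$. Write $s \coloneqq \scpr{x,\beta_0} \sim \mathcal{N}(0,1)$ and decompose $x = s\beta_0 + x^\perp$ with $x^\perp \coloneqq P_{\beta_0}^\perp x$ independent of $s$ (this uses $\norm{\beta_0}_2 = 1$ and rotation invariance of the standard Gaussian). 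Expanding $xx^T = s^2\beta_0\beta_0^T + s\beta_0(x^\perp)^T + s\,x^\perp\beta_0^T + x^\perp(x^\perp)^T$, the cross terms vanish in expectation since $\E[x^\perp] = 0$, and $\E[x^\perp(x^\perp)^T] = I_p - \beta_0\beta_0^T$; one therefore obtains $\E[g(s)(xx^T - I_p)] = \E[g(s)(s^2-1)]\,\beta_0\beta_0^T$. Finally $\E[g(s)(s^2-1)] = \E[f(Z)(Z^2-1)] - \mu\,\E[(Z^2-1)^2] = \E[f(Z)(Z^2-1)] - 2\mu$, which is zero by the definition of $\mu$ (using $\E[(Z^2-1)^2] = \E[Z^4] - 2\E[Z^2] + 1 = 2$). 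Thus $M = 0$ again, and $\rho_0(B^\natural) = 0$ in both cases.

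The computations are elementary, and I do not anticipate a genuine obstacle. The only load-bearing ingredient is the Gaussian projection decomposition in part~(2)—independence of $x^\perp$ from $s$ and the identity $\E[x^\perp(x^\perp)^T] = I_p - \beta_0\beta_0^T$—but this is precisely the mechanism already used in Proposition~\ref{prop_mismatch_sim}.
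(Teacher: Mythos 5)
Your proof is correct. The paper itself omits the argument, merely stating that it is ``straightforward'' and deferring the details to Thrampoulidis and Rawat \citep[Appx.~B.2]{tr19}, so there is no in-paper proof to compare against; what you wrote is exactly the computation the authors had in mind. Your reduction to $M=0$ is the clean way to see it (and slightly sharper than invoking $\rho_0 \le \rho$, since it directly kills the linear functional whose supremum defines $\rho_0$). Part~(1) is a routine expansion in which each term vanishes either because $\E[\nu]=0$ or because $\E[xx^T - \E[xx^T]]=0$, and your part~(2) correctly deploys the orthogonal decomposition $x = s\beta_0 + x^\perp$, the independence of $s$ and $x^\perp$ under rotation invariance, and the moment identities $\E[x^\perp(x^\perp)^T] = I_p - \beta_0\beta_0^T$ and $\E[(Z^2-1)^2]=2$, landing on $M = \E[g(s)(s^2-1)]\,\beta_0\beta_0^T = 0$ by the definition of $\mu$. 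This is also the same mechanism underlying Proposition~\ref{prop_mismatch_sim}, as you noted. One small implicit assumption worth flagging: the supremum defining $\rho_0(B^\natural)$ is taken over $\cone{H-B^\natural}\cap\Sphere^{p^2-1}$, which must be non-empty for ``$\sup = 0$'' to hold (equivalently, $H \ne \{B^\natural\}$); this is tacit in the paper too and is the uninteresting degenerate case.
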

For the sake of brevity, we omit a proof of Proposition~\ref{prop_quadratic_with_noise}; it is straightforward but especially the second part requires some lengthy calculations, see~\citep[Appx.~B.2]{tr19} for more details.
While the first statement of Proposition~\ref{prop_quadratic_with_noise} addresses the classical phase retrieval problem under no additional assumptions on the input vector, the second one indicates that the lifted Lasso can handle much more general non-linearities, at least in the Gaussian case.
In fact, one can achieve a consistent estimator of $\pm\beta_0$ as long as $\mu \neq 0$, which includes a large subclass of even output functions.
This observation allows us to reproduce a main result of \citet{tr19}, thereby integrating it into a more general statistical framework for the lifted Lasso.
They also investigate the important special case of sparse recovery, where $\beta_0$ is sparse and $H$ is a subset of a scaled $\l{1}$-ball in $\R^{p^2}$.
A detailed analysis of this situation goes beyond the scope of this paper, but we emphasize that the complexity bounds presented in the next subsection could be used to derive results in that regard.\footnote{However, this would produce a further example of the observation that \emph{``the same $k^2$-barrier appears in most of the algorithms that have been proposed for sparse recovery from quadratic measurements''}; quote from \citep[Subsec.~2.1]{tr19}; see also \citep{ojfeh15}.}
Finally, we refer to \citep[Subsec.~1.3]{tr19} for further reading on recent approaches to phase retrieval and related problems. 

\subsection{Sparse Recovery and the Complexity of Polytopes}
\label{subsec:discussion:compl}

In this subsection, we discuss our complexity parameters in the context of high-dimensional estimation problems where $n \ll p$, with a particular emphasis on sparse recovery; for a comprehensive introduction to high-dimensional statistics, we refer to the textbooks~\citep{bg11,fh13,htw15,ver18,wai19}.
The common ground of sparse recovery problems is the assumption that the underlying parameter vector is sparse in a certain sense.
In this part, we focus on the specific case where the target vector $\beta^\natural \in \R^p$ is \emph{$k$-sparse}, i.e., at most~$k$ of its coordinates are non-zero.
Since the set of $k$-sparse vectors in $\R^p$ is non-convex for $k<p$, it cannot be used as hypothesis set for the generalized Lasso \eqref{LS_K}, and one has to come up with an appropriate convex relaxation.
Probably the most natural choice is a scaled $\l{1}$-ball, which precisely leads to the standard Lasso studied by \citet{tib96}.

Let us begin with the situation where the hypothesis set is perfectly tuned in the sense that the target vector lies exactly on its boundary.
The following result for the $\l{1}$-ball provides bounds for the local and conic m- and q-complexities in settings of Subsection~\ref{subsec:discussion:sg}--\ref{subsec:discussion:se}.
A proof is given in Appendix~\ref{sec:app:complexities}.
\begin{proposition}\label{prop_upper_bounds_complexities}
Let $t \geq 0$ and assume that $k \lesssim p$. Let $\beta^\natural \in \R^p$ be a $k$-sparse vector and let $K \coloneqq \norm{\beta^\natural}_1 B^p_1$ (i.e., $\beta^\natural$ lies on the boundary of $K$). Then the following holds true:\footnote{The notations for the m- and q-complexities are adopted from the respective settings of Subsection~\ref{subsec:discussion:sg}--\ref{subsec:discussion:se}; see also Lemma~\ref{lemma_q_and_m_global}\ref{lemma_q_and_m_global_4} and~\ref{lemma_q_and_m_global_5} for the relation to their global counterparts.}
\begin{thmproperties}
\item\label{prop_upper_bounds_complexities_20}
	The local and conic m- and q-complexities with respect to $(\gennorm_2,0)$ satisfy
	\begin{equation}
	q_{t,n}^{(2,0)}(K-\beta^\natural) \lesssim m_t^{(2,0)}(K-\beta^\natural) \lesssim \sqrt{k \log\Big(\frac{p}{k}\Big)}.
	\end{equation}
\item\label{prop_upper_bounds_complexities_2inf}
	The local and conic m- and q-complexities with respect to $(\gennorm_2,\gennorm_\infty)$ satisfy
	\begin{equation}
	q_{t,n}^{(2,\infty)}(K-\beta^\natural) \lesssim m_t^{(2,\infty)}(K-\beta^\natural) \lesssim \sqrt{k \log\Big(\frac{p}{k}\Big)\log(p)}.
	\end{equation}
\item\label{prop_upper_bounds_complexities_02}
	The local and conic m- and q-complexities with respect to $(0, \gennorm_2)$ satisfy
	\begin{equation}
		q_{t,n}^{(0,2)}(K-\beta^\natural) \lesssim m_t^{(0,2)}(K-\beta^\natural) \lesssim \sqrt{k} \cdot \log(2 p).
	\end{equation}
\end{thmproperties}
\end{proposition}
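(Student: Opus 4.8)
The plan is to bound all six quantities in the statement --- the local and conic q- and m-complexities for each of the three semi-norm pairs --- in terms of the $\gamma_1$- and $\gamma_2$-functionals of a single set, the truncated $\l{1}$-ball $T \coloneqq 2\sqrt{k}\,B_1^p \cap B_2^p$. First I would reduce to $T$: writing $C_\ast \coloneqq \cone{K-\beta^\natural} \cap \Sphere^{p-1}$, one has $(K-\beta^\natural)\cap t\Sphere^{p-1} \subseteq t\,C_\ast$ for every $t>0$ (any $v$ in the left-hand set lies in $\cone{K-\beta^\natural}$ and has $\norm{v}_2 = t$), so the relevant index sets are scalar multiples of $C_\ast$, both in the local definitions and --- via Definition~\ref{def_conic_q_m_complexity} --- in the conic ones. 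Since $\cone{K-\beta^\natural}$ is exactly the descent cone of $\gennorm_1$ at $\beta^\natural$, writing $\Sigma \coloneqq \supp(\beta^\natural)$ (so $|\Sigma|\le k$) every $h \in \cone{K-\beta^\natural}$ satisfies $\norm{h_{\Sigma^c}}_1 \le \norm{h_\Sigma}_1$; for $h \in C_\ast$ this gives $\norm{h}_1 \le 2\norm{h_\Sigma}_1 \le 2\sqrt{k}\,\norm{h_\Sigma}_2 \le 2\sqrt{k}$, hence $C_\ast \subseteq T$. As $T$ is convex and contains $0$, the set $tT$ is an admissible ``skeleton'' $S$ in the definition of each of the six complexities, so, using the $1$-homogeneity of $\gamma_1$ and $\gamma_2$ (which cancels the prefactor $\tfrac1t$), I obtain uniformly in $t \ge 0$ that $q_{t,n}^{(g,e)}(K-\beta^\natural) \le \tfrac{1}{\sqrt n}\gamma_1(T,\gennorm_e) + \gamma_2(T, \gennorm_g + \gennorm_e)$ and $m_{t}^{(g,e)}(K-\beta^\natural) \le \gamma_1(T,\gennorm_e) + \gamma_2(T, \gennorm_g)$.

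It then remains to estimate the three functionals $\gamma_2(T,\gennorm_2)$, $\gamma_1(T,\gennorm_\infty)$, and $\gamma_1(T,\gennorm_2)$ that appear when $(\gennorm_g,\gennorm_e) \in \{(\gennorm_2,0),(\gennorm_2,\gennorm_\infty),(0,\gennorm_2)\}$ (note that $\gennorm_\infty \le \gennorm_2$, so the mixed term $\gamma_2(T,\gennorm_g+\gennorm_e)$ in the q-complexity is controlled by $\gamma_2(T,\gennorm_2)$ plus $\gamma_1(T,\gennorm_e)$). For the first, the Majorizing Measure Theorem \eqref{eq_talagrand_majorizing_measure} together with the standard estimate $w(\sqrt{s}\,B_1^p \cap B_2^p) \lesssim \sqrt{s\log(ep/s)}$ gives $\gamma_2(T,\gennorm_2) \asymp w(T) \lesssim \sqrt{k\log(p/k)}$, using $k \lesssim p$. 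For the second, \eqref{exponential_width_eq} and \eqref{bound_exponential_width} give $\gamma_1(T,\gennorm_\infty) + \gamma_2(T,\gennorm_2) \asymp \E\big[\sup_{v\in T}\scpr{v,Y}\big] \lesssim \sqrt{\log p}\cdot w(T) \lesssim \sqrt{k\log(p/k)\log p}$. For the third, I would combine a Dudley-type bound $\gamma_1(T,\gennorm_2) \lesssim \int_0^\infty \log N(T,\gennorm_2,\eps)\,d\eps$ with the metric-entropy estimate $\log N(\sqrt{s}\,B_1^p \cap B_2^p, \gennorm_2, \eps) \lesssim s\log(ep/s)\,(1 + \log(1/\eps))$ for $\eps \in (0,1]$, which integrates to $\gamma_1(T,\gennorm_2) \lesssim k\log(p/k)$. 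Substituting these three estimates into the two displayed inequalities yields \ref{prop_upper_bounds_complexities_20}, \ref{prop_upper_bounds_complexities_2inf}, and the m-complexity bound of \ref{prop_upper_bounds_complexities_02}; the q-complexity bound of \ref{prop_upper_bounds_complexities_02} comes out because the $\tfrac{1}{\sqrt n}$-weighted term $\gamma_1(T,\gennorm_2)$ contributes exactly the summand $\tfrac{k}{\sqrt n}\log(p/k)$.

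The main obstacle is the metric-entropy estimate for $\sqrt{k}\,B_1^p\cap B_2^p$ used in the last step: the plain Maurey bound $\log N(\sqrt k\,B_1^p,\gennorm_2,\eps) \lesssim k\log(p)/\eps^2$ is neither integrable near $\eps = 0$ nor does it capture the sharp $\log(p/k)$ factor. I would get around this by exploiting that $\sqrt s\,B_1^p\cap B_2^p$ is, up to a universal constant, the convex hull of the $s$-sparse Euclidean unit vectors, and then combining a count of the $\binom{p}{s}$ supports with Maurey's empirical method on each $s$-dimensional coordinate subspace. Once the entropy bound is in hand, converting it into the stated $\gamma_1$- and $\gamma_2$-bounds --- e.g.\ by summing dyadic entropy numbers --- is routine.
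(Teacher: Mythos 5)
Your reductions and estimates for parts~\ref{prop_upper_bounds_complexities_20} and~\ref{prop_upper_bounds_complexities_2inf} are correct and effectively equivalent to the paper's: both exploit that the Gaussian width \eqref{eq_talagrand_majorizing_measure} and the exponential width \eqref{exponential_width_eq} are invariant under taking convex hulls, so whether the skeleton is the convex body $T \coloneqq 2\sqrt{k}\,B_1^p \cap B_2^p$ or the non-convex set of $k$-sparse vectors is immaterial there.

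Part~\ref{prop_upper_bounds_complexities_02}, however, has a genuine gap. The convex body $T$ contains the full-dimensional Euclidean ball $\sqrt{k/p}\,B_2^p$ (because $\norm{v}_1 \leq \sqrt{p}\norm{v}_2$ for all $v \in \R^p$), and the $\gamma_1$-functional admits the Sudakov-type lower bound $\gamma_1(L,\gennorm_2) \gtrsim \varepsilon\log M$ whenever $L$ contains $M$ mutually $\varepsilon$-separated points; applied to a maximal $\tfrac12$-packing of that ball (of cardinality $\geq 2^{cp}$), this gives $\gamma_1(T,\gennorm_2) \gtrsim \sqrt{k/p}\cdot p = \sqrt{kp}$, which far exceeds your target $k\log(p/k)$ unless $k$ is within a polylogarithmic factor of $p$. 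Correspondingly, the entropy estimate $\log \mathcal{N}(T,\gennorm_2,\varepsilon) \lesssim k\log(ep/k)(1+\log(1/\varepsilon))$ that your argument hinges on cannot hold: for $\varepsilon \lesssim \sqrt{k/p}$, one has $\log \mathcal{N}(T,\varepsilon) \gtrsim p\log(\sqrt{k/p}/\varepsilon)$, which is of order $p$ rather than $k \cdot \operatorname{polylog}(p)$, and the ``count supports then Maurey'' fix you sketch only reproduces the nonintegrable $1/\varepsilon^2$-dependence. This is exactly the situation in which the ``skeleton'' infimum in the definitions of the m- and q-complexities is indispensable, since $\gamma_1(\cdot,\gennorm_2)$ is \emph{not} hull-invariant. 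The paper takes the non-convex skeleton $S \coloneqq \{v \in \R^p : \norm{v}_0 \leq k,\ \norm{v}_2 \leq 3\}$, verifies $\convh(S) \supset \cone{K-\beta^\natural}\cap\Sphere^{p-1}$ via a support-function argument adapted from \citet[Lem.~13]{pw14}, and then bounds $\gamma_1(S,\gennorm_2)$ and $\gamma_2(S,\gennorm_2)$ by a Dudley integral using the entropy bound $\log\mathcal{N}(\tfrac{1}{3}S,\varepsilon) \leq k\log(9p/(\varepsilon k))$ from \citet[Lem.~3.3]{pv13}, which holds because $S$ is a union of only $\binom{p}{k}$ coordinate balls; this does integrate to $k\log(p/k)$. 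The gap between $\gamma_1(S,\gennorm_2) \lesssim k\log(p/k)$ and $\gamma_1(\convh(S),\gennorm_2) \gtrsim \sqrt{kp}$ is precisely why one must not pass to the convex hull before computing $\gamma_1$.
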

Since all upper bounds in Proposition~\ref{prop_upper_bounds_complexities} scale only logarithmically with the ambient dimension~$p$, we can conclude that sparse recovery is feasible in the settings of Subsection~\ref{subsec:discussion:sg}--\ref{subsec:discussion:se}.
Moreover, the square-root-dependence on the sparsity $k$ is optimal in each of the above cases.
In particular, the sample-size condition \eqref{thm_eg_local_condition_n} of Theorem~\ref{thm_eg_local} takes the familiar form
\begin{equation}
	n \gtrsim k \cdot \operatorname{Polylog}(p,k),
\end{equation}
where we have ignored other model-dependent parameters for the sake of clarity.

In situations where the hypothesis set is not perfectly tuned, it can be more appropriate to apply the global error bound from Corollary~\ref{cor_eg_global} instead of Theorem~\ref{thm_eg_local} (or Corollary~\ref{cor_eg_conic}). In this context, the ``skeleton'' optimization in the m- and q-complexities proves very useful.
To this end, let us assume that $K \subset \R^p$ is a convex polytope, i.e., $K = \convh(F)$ for a finite set of vertices $F \subset \R^p$. For $\alpha \in \{1,2\}$, we then have
\begin{equation} \label{eq_complexity_finiteset}
\gamma_\alpha(F,d) \lesssim \Delta(F) \cdot \big(\log(|F|)\big)^{\frac{1}{\alpha}}.
\end{equation}
This rather crude bound is proved straightforwardly by constructing an admissible partition sequence whose partitions contain all elements of $F$ as singletons ``as soon as admissible'', and by bounding every diameter trivially by $\Delta(F)$. With \eqref{eq_complexity_finiteset} at hand, we immediately obtain the following bounds for the global complexity parameters in the case of polytopal hypothesis sets:
\begin{proposition}\label{prop_polytopes}
	Let $K \subset \R^p$ be a convex polytope with $D$ vertices. Then we have
	\begin{align}
	    q_n^{(g,e)}(K) &\lesssim \frac{\Delta_e(K)\cdot \log(D)}{\sqrt{n}}+ \big(\Delta_g(K)+\Delta_e(K)\big) \cdot \sqrt{\log(D)},\\
	    m^{(g,e)}(K)  &\lesssim \Delta_e(K)\cdot \log(D)+\Delta_g(K)\cdot \sqrt{\log(D)},
	\end{align}
	where $\Delta_e(K)$ and $\Delta_g(K)$ are the diameters of $K$ with respect to the semi-norms $\gennorm_e$ and $\gennorm_g$, respectively.
\end{proposition}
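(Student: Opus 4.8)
The plan is to feed the vertex set of $K$ directly into the ``skeleton'' optimization. Write $K = \convh(F)$ with $|F| = D$, and take $S = F$ in the infima defining the global q- and m-complexities (Definition~\ref{def_global_q_m_complexity}). Since $\convh(F) = K \supset K$, this $S$ is feasible in both infima, so it will suffice to establish
\[
  q_n^{(g,e)}(K) \le \tfrac{1}{\sqrt{n}}\,\gamma_1(F,\gennorm_e) + \gamma_2(F,\gennorm_g+\gennorm_e),
  \qquad
  m^{(g,e)}(K) \le \gamma_1(F,\gennorm_e) + \gamma_2(F,\gennorm_g),
\]
and then to bound the three $\gamma$-functionals of the \emph{finite} set $F$ separately.

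For the latter step I would simply invoke the crude estimate \eqref{eq_complexity_finiteset}. Applied with $\alpha = 1$ and the pseudo-metric $d_{\gennorm_e}$, it yields $\gamma_1(F,\gennorm_e) \lesssim \Delta_e(F)\log D$; applied with $\alpha = 2$ and the pseudo-metrics $d_{\gennorm_g}$ and $d_{\gennorm_g+\gennorm_e}$, it yields $\gamma_2(F,\gennorm_g) \lesssim \Delta_g(F)\sqrt{\log D}$ and $\gamma_2(F,\gennorm_g+\gennorm_e) \lesssim \big(\sup_{v_1,v_2 \in F}(\norm{v_1-v_2}_g + \norm{v_1-v_2}_e)\big)\sqrt{\log D} \le \big(\Delta_g(F) + \Delta_e(F)\big)\sqrt{\log D}$, the last inequality being the triangle inequality for diameters. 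Since $F \subset K$, each diameter of $F$ is at most the corresponding diameter of $K$, i.e. $\Delta_e(F) \le \Delta_e(K)$ and $\Delta_g(F) \le \Delta_g(K)$ (convexity of $K$ in fact gives equality, but the inequalities are all that is needed). Substituting into the two displayed bounds produces exactly the claimed estimates for $q_n^{(g,e)}(K)$ and $m^{(g,e)}(K)$.

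I do not expect a genuine obstacle here: the only non-trivial input is \eqref{eq_complexity_finiteset}, whose short proof is already indicated in the text. For completeness I would spell it out as follows: for each $\alpha \in \{1,2\}$, keep $\mathcal{A}_s = \{F\}$ as long as the admissibility bound $|\mathcal{A}_s| \le 2^{2^s}$ is too small to separate all $D$ points, and let $\mathcal{A}_s$ consist of all singletons $\{v\}$, $v \in F$, as soon as $2^{2^s} \ge D$, i.e. for $s \ge s_0$ with $2^{s_0} \asymp \log D$; bounding every surviving diameter trivially by $\Delta(F)$, the defining series $\sum_{s < s_0} 2^{s/\alpha}\Delta(F)$ is geometric and hence dominated by its last term $2^{s_0/\alpha}\Delta(F) \asymp \Delta(F)(\log D)^{1/\alpha}$. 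The single edge case worth noting is $D = 1$, where $K$ is a point, $\log D = 0$, and both sides of each inequality vanish, so the statement holds trivially.
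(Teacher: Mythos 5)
Your proof is correct and follows the paper's argument exactly: take $S = F$ (the vertex set) in the skeleton optimization, bound each $\gamma$-functional of the finite set $F$ by the crude estimate \eqref{eq_complexity_finiteset}, and replace $\Delta(F)$ by $\Delta(K)$. The paper itself presents \eqref{eq_complexity_finiteset} with only the same brief sketch you spell out, and then states that the proposition follows ``immediately,'' which is precisely your two-line substitution.
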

Since the $\l{1}$-ball in $\R^p$ has only $D = 2p$ vertices, Proposition~\ref{prop_polytopes} implies that sparse recovery is possible in the high-dimensional regime $n \ll p$ for all variants of generic Bernstein concentration discussed in this section, even if the hypothesis set $K$ is not perfectly tuned. For example, if $\beta^\natural$ is $k$-sparse and has unit norm, $K = \sqrt{k} B^p_1$ would be a valid choice for Corollary~\ref{cor_eg_global}.
Bypassing perfect tuning is in fact a desirable feature in statistics, but we point out that the error bound \eqref{cor_eg_global_bound_t} in Corollary~\ref{cor_eg_global} exhibits a suboptimal decay rate of $O(n^{-1/4})$.

The above findings indicate a noteworthy phenomenon of our complexity parameters, and generic chaining in general.
The argument behind Proposition~\ref{prop_polytopes} is especially effective for those polytopes with few vertices because we then only have to control the empirical processes over this small subset (cf.~\eqref{eq_complexity_finiteset}).
For the $\gamma_2(\cdot, \gennorm_2)$-functional, this simplification is irrelevant, since it is equivalent to the Gaussian width according to \eqref{eq_talagrand_majorizing_measure}.
However, the general geometric mechanisms behind this fact remain largely mysterious, e.g., see~\citep[Sec.~2.4]{tal14}.
In particular, the situation is much less understood beyond this special case and the involved $\gamma$-functionals are not necessarily invariant under taking the convex hull.
Consequently, controlling the m- and q-complexities in any specific situation is a highly non-trivial task.

\section{Conclusion and Outlook}
\label{sec:conclusion}

Leaving aside the specific aspects and applications discussed in the previous sections, the overall conclusion of our main results reads as follows: 
The benchmark case of sub-Gaussian sample data can be seen as a ``barrier'' behind which the estimation behavior of the generalized Lasso \eqref{LS_K} can change significantly.
The key difference becomes manifested in what way the complexity of the hypothesis set $K$ is measured.
Indeed, the m- and q-complexities do not generally enjoy a simple geometric interpretation similar to the Gaussian width, and except for some specific scenarios, the underlying chaining functionals are difficult to control (see Subsection~\ref{subsec:discussion:compl}).
On the other hand, we have observed that several statistical and conceptual features remain valid beyond sub-Gaussianity.
In particular, semi-parametric estimation problems can be treated as before, since the consistency of the generalized (or lifted) Lasso is not affected by the tail behavior of the input vectors (see Subsection~\ref{subsec:discussion:semipara} and~\ref{subsec:discussion:liftedlasso}).

On the technical side of this paper stands an application of generic chaining, as a means of controlling the quadratic and the multiplier process according to the underlying geometry of their index sets.
In our specific analysis, this paradigm appears explicitly in the notion of generic Bernstein concentration: the correct way to measure complexity is determined by the tail behavior of the input vector, which is captured by two (appropriate) semi-norms.

We close our discussion with a short list of open problems and possible extensions of our approach:
\begin{listing}
\item
	\textbf{Beyond sub-exponentiality.} Are our results extendable to input vectors for which generic Bernstein concentration is simply too restrictive?
	An obvious relaxation would be that~$x$ obeys only a $\alpha$-sub-exponential distribution, i.e., $\norm{x}_{\psi_\alpha} < \infty$ for some $0 <\nobreak \alpha <\nobreak 1$ (cf.~Theorem~\ref{thm_goetze}). For instance, such distributions occur naturally when studying higher-order variants of the lifted Lasso, where the input data consist of tensor products.
	We believe that our basic proof strategy would not break down in such scenarios.
	In fact, even though $\gennorm_{\psi_\alpha}$ is just a quasi-norm for $0 < \alpha < 1$, concentration inequalities are available, similarly to the case $\alpha = 1$, e.g., see~\citep{gss19,sam20,bmp20}.
	Hence, a careful adaptation of generic Bernstein concentration and the related chaining argument might lead to similar estimation guarantees as in Section~\ref{sec:mainresults}.
	It is worth pointing out that lower bounds for the quadratic process under heavier tailed inputs are subject of recent research, e.g., see~\citep{sbr15,lm17b,kr18,kc18}, while the behavior of the multiplier process remains largely unclear.
\item
	\textbf{The multiplier process.} The conclusion of the previous point gives rise to another relevant issue: How tight is our bound for the multiplier process (in Proposition~\ref{prop_multiplier_bound})?
	Can it be improved in general or at least in specific model setups? 
	Let us be a little more precise about this concern: Our approach to controlling the multiplier process is based on a powerful concentration inequality by \citet{men16}, formulated in Theorem~\ref{thm_men_subexp_bound}.
	In contrast, the multiplier process is handled with more elementary arguments in most related works on the generalized Lasso, e.g., see~\citep{pv16,tr19,so19}.
	These approaches suffer from a more pessimistic probability of success and may lead to different error bounds in some situations.
	We suspect that there exists a certain trade-off between the probability of success and the size of the related complexity terms. 
	In this regard, a particularly interesting phenomenon is that---in contrast to the sub-Gaussian case---the complexities of the multiplier process and the quadratic process may be measured in a different way.
\item
	\textbf{Beyond linearity and convexity.} The results of this paper are limited to \emph{convex} hypothesis sets consisting of \emph{linear} functions.
	Indeed, convexity is an important ingredient of Fact~\ref{fact_excess}, while linearity enables the optimization over the ``skeleton'' in the proofs of Proposition~\ref{prop_quadratic_bound} and~\ref{prop_multiplier_bound}.
	We expect that it is possible to drop the convexity assumption on $K$ by analyzing the projected gradient descent method as an algorithmic implementation of \eqref{LS_K}, e.g., see \citep{os16,ors18,so19}.
	However, it is not clear to us whether our complexity terms would still adequately capture the non-convex nature of the hypothesis set; for instance, recall that the Gaussian width is invariant under taking the convex hull.
	In general, the analysis of non-convex optimization problems is very subtle, due to the possible presence of spurious local optima or saddle points. 
	On the other hand, non-convex methods often perform better in both theory and practice, e.g., see~\citep{cm19}.
	These benefits have triggered a large amount of research in the last decade, but it is fair to say that many important issues in this field remain widely open.
\end{listing}

\section{Proofs of the Main Results}
\label{sec:proofs}

This part is dedicated to the proofs for Section~\ref{sec:intro} (provided in Subsection~\ref{subsec:proofs:prop_intro}) and Section~\ref{sec:mainresults} (provided in Subsection~\ref{subsec:proofs:implications}--\ref{subsec:proofs:cor_eg_global}).

\subsection{Implications of Generic Bernstein Concentration}
\label{subsec:proofs:implications}

We begin with two implications of generic Bernstein concentration (see Definition~\ref{def_gen_bern}), which are required for the proof of Theorem~\ref{thm_eg_local} in the next subsection, but might be also of independent interest. The proofs of both lemmas can be found in Appendix~\ref{sec:app:implications}.
The first one concerns the $q$-th moment of the marginals of a random vector that satisfies generic Bernstein concentration; recall the notation $v^*$ from Subsection~\ref{subsec:intro:notation}.
\begin{lemma} \label{lemma_bc_norm_bound}
Let $q \geq 1$. Let $x$ be a random vector in $\R^p$ that exhibits generic Bernstein concentration with respect to $(\gennorm_g,\gennorm_e)$, and we equip $\R^p$ with the pushforward measure~$\Prob \circ x^{-1}$. Then for all $v \in \R^p$, we have that
\begin{equation}
	\norm{v^*}_{L^q}\lesssim q \cdot \norm{v}_e + \sqrt{q} \cdot \norm{v}_g.
\end{equation}
\end{lemma}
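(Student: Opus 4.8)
The plan is to convert the tail bound in Definition~\ref{def_gen_bern} into a moment bound via the standard layer-cake (tail integration) formula $\E[|Z|^q] = \int_0^\infty q s^{q-1}\Prob(|Z|\geq s)\,\d s$, applied to the random variable $Z \coloneqq \scpr{x,v}$, whose distribution (under the pushforward measure) makes $\E[|Z|^q] = \norm{v^*}_{L^q}^q$. If $\norm{v}_g = \norm{v}_e = 0$ then the tail bound forces $\Prob(|\scpr{x,v}|\geq t)=0$ for all $t>0$, so $v^* = 0$ a.e.\ and the inequality is trivial; hence assume at least one of the two semi-norms is positive at $v$.

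First I would split the tail bound \eqref{def_gen_bern_conc_ineq} according to which of the two terms in the minimum is active. The crossover happens at the scale $s_0 \coloneqq \norm{v}_g^2/\norm{v}_e$ (with the obvious conventions when $\norm{v}_e = 0$, in which case the Gaussian term is active for all $s$, or when $\norm{v}_g = 0$, in which case the exponential term is always active): for $s \leq s_0$ one has $\min\{s^2/\norm{v}_g^2, s/\norm{v}_e\} = s^2/\norm{v}_g^2$, and for $s \geq s_0$ it equals $s/\norm{v}_e$. Accordingly I would write
\begin{equation}
	\norm{v^*}_{L^q}^q \leq 2q\int_0^{s_0} s^{q-1}\exp\Big(-\tfrac{s^2}{\norm{v}_g^2}\Big)\,\d s + 2q\int_{s_0}^\infty s^{q-1}\exp\Big(-\tfrac{s}{\norm{v}_e}\Big)\,\d s,
\end{equation}
then bound each integral by the corresponding integral over all of $(0,\infty)$. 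The first extends to a Gaussian-type integral, which after the substitution $s = \norm{v}_g \sqrt{r}$ becomes $\norm{v}_g^q\,\Gamma(q/2+1)$ up to a constant; the second, after $s = \norm{v}_e r$, becomes $\norm{v}_e^q\,\Gamma(q+1)$ up to a constant. Thus $\norm{v^*}_{L^q}^q \lesssim \norm{v}_g^q\,\Gamma(\tfrac{q}{2}+1) + \norm{v}_e^q\,\Gamma(q+1)$.

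Then I would take $q$-th roots and apply the elementary estimates $\Gamma(q+1)^{1/q} \lesssim q$ and $\Gamma(\tfrac{q}{2}+1)^{1/q}\lesssim \sqrt{q}$ (both immediate from Stirling, valid for $q\geq 1$), together with $(a+b)^{1/q}\leq a^{1/q}+b^{1/q}$ for $a,b\geq 0$, to obtain $\norm{v^*}_{L^q} \lesssim \sqrt{q}\cdot\norm{v}_g + q\cdot\norm{v}_e$, which is the claim. There is no serious obstacle here; the only mild care needed is the bookkeeping of the degenerate cases $\norm{v}_g = 0$ or $\norm{v}_e = 0$ (where $s_0$ is $0$ or $\infty$ and one of the two integrals disappears) and making sure the conventions $t/0 = \infty$, $\exp(-\infty)=0$ from Definition~\ref{def_gen_bern} are invoked so that the tail probability genuinely vanishes on the appropriate range. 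This is essentially the same computation that underlies the equivalence of sub-exponential tail and moment conditions in \citet[Chap.~2--3]{ver18}, specialized to the mixed-tail setting.
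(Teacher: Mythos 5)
Your proposal is correct and follows essentially the same route as the paper's proof: layer-cake formula, reduce the mixed tail to a sum of a Gaussian-type and an exponential-type integral over all of $(0,\infty)$, express these via the $\Gamma$-function, and finish with Stirling and $(a+b)^{1/q}\leq a^{1/q}+b^{1/q}$. The only cosmetic difference is that you split the integration domain at the crossover scale $s_0 = \norm{v}_g^2/\norm{v}_e$, while the paper uses the identity $\exp(-\min\{a,b\}) = \max\{\exp(-a),\exp(-b)\} \leq \exp(-a)+\exp(-b)$; both yield the identical intermediate bound.
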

The second lemma addresses the symmetrized sum of i.i.d.\ random vectors that satisfy generic Bernstein concentration. The resulting random vector still exhibits generic Bernstein concentration but with respect to different semi-norms.
\begin{lemma} \label{lemma_bern_sum} 
	Let $x$ be a random vector in $\R^p$ that exhibits generic Bernstein concentration with respect to $(\gennorm_g,\gennorm_e)$ and let $x_1,\dots,x_n$ be independent copies of $x$. Furthermore, let $\varepsilon_1,\dots,\varepsilon_n$ be independent Rademacher random variables (also independent of the $x_i$).
	Then the rescaled symmetrized sum
	\begin{equation}
		S_n \coloneqq \tfrac{1}{\sqrt{n}}\sum_{i=1}^n \varepsilon_i  x_i
	\end{equation}
	exhibits generic Bernstein concentration with respect to $\big(C(\gennorm_g+\gennorm_e),\tfrac{C}{\sqrt{n}}\gennorm_e\big)$, where $C > 0$ is a universal constant.
\end{lemma}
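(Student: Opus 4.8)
The plan is to fix $v \in \R^p$ and reduce everything to a one-dimensional tail estimate for $\scpr{S_n,v}$. Writing $Z_i \coloneqq \scpr{x_i,v}$, these are i.i.d.\ copies of $Z \coloneqq \scpr{x,v}$ and $\scpr{S_n,v} = \tfrac{1}{\sqrt n}\sum_{i=1}^n \eps_i Z_i$. Since each $\eps_i Z_i$ is symmetric, its moment generating function equals $\mu \mapsto \E[\cosh(\mu Z)]$, so the whole argument will be a Chernoff bound whose only real input is a good estimate of this quantity. I would also record at the outset the elementary monotonicity fact that generic Bernstein concentration with respect to a pair of semi-norms implies the same with respect to any pointwise-larger pair (enlarging either semi-norm only weakens the tail bound in Definition~\ref{def_gen_bern}); this lets me conclude with the pair $\big(C(\gennorm_g+\gennorm_e),\tfrac{C}{\sqrt n}\gennorm_e\big)$ once I have established it with somewhat smaller semi-norms.

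The key step is to convert the moment information of Lemma~\ref{lemma_bc_norm_bound} into an MGF bound. That lemma gives $\norm{Z}_{L^q} = \norm{v^*}_{L^q} \lesssim q\norm{v}_e + \sqrt q\,\norm{v}_g$ for every $q \geq 1$, hence $\E[Z^{2k}] \leq C^{2k}\big((2k)^{2k}\norm{v}_e^{2k} + (2k)^k\norm{v}_g^{2k}\big)$. I would plug these even moments into the power series of $\cosh$ and use $(2k)! \geq (2k/e)^{2k}$ to cancel the factorials; what remains is a geometric series in $\mu\norm{v}_e$, which converges precisely for $|\mu| \leq c/\norm{v}_e$ and then contributes only a term of order $\mu^2\norm{v}_e^2$, plus an exponential series in $\mu^2\norm{v}_g^2$. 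This yields $\E[\cosh(\mu Z)] \leq \exp\big(C\mu^2(\norm{v}_g^2 + \norm{v}_e^2)\big)$ for all $|\mu| \leq c/\norm{v}_e$, with the degenerate cases $\norm{v}_e = 0$ or $\norm{v}_g = 0$ handled by the conventions of Definition~\ref{def_gen_bern}. By independence this tensorizes: $\E[\exp(\lambda\scpr{S_n,v})] = \prod_{i=1}^n \E[\cosh(\tfrac{\lambda}{\sqrt n}Z_i)] \leq \exp\big(C\lambda^2(\norm{v}_g^2 + \norm{v}_e^2)\big)$ for all $|\lambda| \leq c\sqrt n/\norm{v}_e$ --- so the variance proxy stays of order $\norm{v}_g^2 + \norm{v}_e^2$ while the admissible range of $\lambda$ has been dilated by $\sqrt n$, which is exactly the source of the improvement of the exponential scale from $\norm{v}_e$ to $\norm{v}_e/\sqrt n$.

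It then remains to run the Chernoff optimization over $\lambda \in [0, c\sqrt n/\norm{v}_e]$: take the unconstrained minimizer when it lies in this interval, and the right endpoint otherwise (in that regime the linear term $-\lambda t$ dominates the quadratic one), obtaining $\Prob(\scpr{S_n,v} \geq t) \leq \exp\big(-c'\min\{t^2/(\norm{v}_g^2 + \norm{v}_e^2),\, \sqrt n\, t/\norm{v}_e\}\big)$. Applying the same bound to $-v$, taking a union bound for the factor $2$, and using $\norm{v}_g^2 + \norm{v}_e^2 \leq (\norm{v}_g + \norm{v}_e)^2$, one arrives after renaming constants at generic Bernstein concentration of $S_n$ with respect to $\big(C(\gennorm_g + \gennorm_e), \tfrac{C}{\sqrt n}\gennorm_e\big)$. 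The main obstacle is the $\cosh$--MGF estimate in the middle paragraph: one has to separate cleanly the genuine ``variance'' contribution (which must absorb a $\norm{v}_e^2$ coming from the low-order terms of the geometric series) from the range restriction $|\mu| \lesssim 1/\norm{v}_e$ that this series forces, all while keeping control of the universal constants through the factorial cancellations. A possible alternative --- decomposing $Z$ into a sub-Gaussian and a sub-exponential summand and applying the classical Bernstein inequality (Theorem~\ref{thm_bern_ineq}) to the symmetrized sum of the latter --- does not seem to be any shorter.
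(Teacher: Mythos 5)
Your argument is correct and follows essentially the same route as the paper: a Chernoff bound for $\scpr{S_n,v}$ in which the one-dimensional MGF is controlled by expanding it in a power series and feeding in the moment growth from Lemma~\ref{lemma_bc_norm_bound}, with the admissible range $|\lambda|\lesssim\sqrt n/\norm{v}_e$ producing the rescaling of the exponential semi-norm. Your use of $\cosh$ (rather than $\exp$ of a centered symmetric variable) is a mild streamlining — it drops the odd moments outright and lets the Gaussian part be bounded by a single inequality $(2k)^k\geq 2^kk!$ without the case split on $\lambda$ versus $1/\norm{v}_g$ that the paper's $B$-term requires — but the underlying mechanism is identical.
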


\subsection{Proof of Theorem~\ref{thm_eg_local}}
\label{subsec:proofs:thm_eg_local}

Throughout this subsection and unless otherwise stated, we assume that the hypotheses of Theorem~\ref{thm_eg_local} are satisfied, especially Assumption~\ref{model_setup}.
Let us recall the decomposition of the excess risk from \eqref{decomp_excess}:
\begin{equation}
\mathcal{E}(\beta,\beta^\natural) =\underbrace{\tfrac{1}{n} \sum_{i=1}^n\scpr{x_i,\beta-\beta^\natural}^2}_{=\mathcal{Q}(\beta-\beta^\natural)} + \underbrace{\tfrac{2}{n} \sum_{i=1}^n(\scpr{x_i,\beta^\natural}-y_i) \scpr{x_i, \beta-\beta^\natural}}_{=\mathcal{M}(\beta,\beta^\natural)}.
\end{equation}
According to Fact~\ref{fact_excess}, our main goal is to show that $\mathcal{E}(\beta,\beta^\natural) > 0$ for all $\beta \in K_{\beta^\natural,t}$.
For this purpose, we will first treat the quadratic and multiplier process separately in Subsection~\ref{subsec:proofs:quadraticprocess} and~\ref{subsec:proofs:multiplierprocess} below.
The outcome of this analysis are Proposition~\ref{prop_quadratic_bound} and~\ref{prop_multiplier_bound}, respectively, which eventually allows us to derive the desired error bound in Subsection~\ref{subsec:proofs:errorbounds}.
We note that some results in Subsection~\ref{subsec:proofs:quadraticprocess} and~\ref{subsec:proofs:multiplierprocess} are presented in a slightly more general setting, considering a generic set $L \subset \R^p$ instead of specific subsets of $K$.

\subsubsection{The Quadratic Process}
\label{subsec:proofs:quadraticprocess}

We now address the issue of finding a lower bound for the quadratic process $\mathcal{Q}(\beta-\beta^\natural)$. Setting $v \coloneqq \beta - \beta^\natural \in K - K$, the square root of the quadratic process takes the form
\begin{equation}
	\sqrt{\mathcal{Q}(\beta - \beta^\natural)} = \tfrac{1}{\sqrt{n}} \bigg(\sum_{i=1}^n \scpr{x_i,v}^2\bigg)^{\frac{1}{2}}.
\end{equation}
Evidently, the quadratic process describes an interaction of the input vectors $x_i$ with the difference of two hypotheses $\beta, \beta^\natural \in K$. In this sense, it is intrinsic to $K$---it does not depend in any way on~$y$, and in particular not on the model mismatch $y - \scpr{x, \beta^\natural}$ (cf.~Appendix~\ref{subsec:app:rem_main}\ref{rmk_local_mendelson}).

Since $\mathcal{Q}(\beta - \beta^\natural)$ is a non-negative empirical process, it is suited for an application of the \emph{small-ball method}.
We state a version by \citet[Prop.~5.1]{tro15} here, but it should be emphasized that the original idea is due to Mendelson (e.g., see \citep[Thm.~5.4]{men15}); recall the notion of small-ball function from Definition~\ref{def_q_theta}.
\begin{theorem}[{Small-ball method; \citep[Prop~5.1]{tro15}}]\label{thm_msbm}
	Let $L \subset \R^p$. Let $x$ be a random vector in $\R^p$ and let $x_1, \dots,x_n$ be independent copies of $x$. Then for every $\theta > 0$ and $u > 0$, we have that 
	\begin{equation}
		\inf_{v \in L} \bigg(\sum_{i=1}^n \scpr{x_i,v}^2\bigg)^{\frac{1}{2}} \geq \theta \cdot \sqrt{n} \cdot Q_{2\theta}(L,x) - 2 W_n(L,x) - \theta \cdot u
	\end{equation}
	with probability at least $1-\exp(-u^2 / 2)$, where
	\begin{equation}\label{def_empirical_width}
		W_n(L,x) \coloneqq \E\bigg[\sup_{v \in L} \SCPR[\Big]{\tfrac{1}{\sqrt{n}} \sum_{i=1}^n \varepsilon_i x_i, v}\bigg]
	\end{equation}
	is the \emph{empirical width} of $L$ with independent Rademacher random variables $\varepsilon_1,\dots,\varepsilon_n$. 
\end{theorem}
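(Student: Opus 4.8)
The plan is to run the classical small-ball argument of Koltchinskii and Mendelson: replace the indicator $\mathbf 1[|s|\ge\theta]$ by a Lipschitz surrogate so that the relevant empirical process is amenable to symmetrization and a contraction estimate. Fix $\theta,u>0$ and let $\psi\colon\R\to[0,1]$ be the piecewise-linear map that vanishes on $[-\theta,\theta]$, equals $1$ on $\R\setminus[-2\theta,2\theta]$, and interpolates linearly in between. Then $\psi(0)=0$, $\psi$ is $\tfrac1\theta$-Lipschitz, and $\mathbf 1[|s|\ge 2\theta]\le\psi(s)\le\mathbf 1[|s|\ge\theta]$ for all $s\in\R$.

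\emph{Step 1 (passing to the surrogate).} For fixed $v\in L$, set $N_v\coloneqq\#\{i\le n:|\scpr{x_i,v}|\ge\theta\}$. Since $\sum_i\scpr{x_i,v}^2\ge\theta^2 N_v$ and $N_v\le n$, one gets $\big(\sum_i\scpr{x_i,v}^2\big)^{1/2}\ge\theta\sqrt{N_v}\ge\tfrac{\theta}{\sqrt n}N_v\ge\tfrac{\theta}{\sqrt n}\sum_i\psi(\scpr{x_i,v})$, where the last inequality uses $\psi\le\mathbf 1[|\cdot|\ge\theta]$. Taking the infimum over $v\in L$, it remains to bound $\Phi\coloneqq\inf_{v\in L}\sum_{i=1}^n\psi(\scpr{x_i,v})$ from below.

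\emph{Step 2 (concentration, mean lower bound, symmetrization).} Since $0\le\psi\le1$, replacing a single $x_i$ changes $\Phi$ by at most $1$, so the bounded-differences inequality gives $\Phi\ge\E\Phi-\tfrac12\sqrt n\,u$ with probability at least $1-\exp(-u^2/2)$. For $\E\Phi$, write $\sum_i\psi(\scpr{x_i,v})=n\E[\psi(\scpr{x,v})]-\big(n\E[\psi(\scpr{x,v})]-\sum_i\psi(\scpr{x_i,v})\big)$; because $\psi\ge\mathbf 1[|\cdot|\ge2\theta]$ we have $\E[\psi(\scpr{x,v})]\ge\Prob(|\scpr{x,v}|\ge2\theta)\ge Q_{2\theta}(L,x)$ uniformly over $v\in L$, hence $\E\Phi\ge nQ_{2\theta}(L,x)-\E Z$ with $Z\coloneqq\sup_{v\in L}\big(n\E[\psi(\scpr{x,v})]-\sum_i\psi(\scpr{x_i,v})\big)$. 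By the one-sided symmetrization inequality, $\E Z\le2\,\E\sup_{v\in L}\sum_i\varepsilon_i\psi(\scpr{x_i,v})$; conditioning on $x_1,\dots,x_n$ and applying the contraction principle to the $\tfrac1\theta$-Lipschitz contractions $\psi$ (which vanish at $0$), this is at most $\tfrac2\theta\,\E\sup_{v\in L}\sum_i\varepsilon_i\scpr{x_i,v}=\tfrac{2\sqrt n}{\theta}\,W_n(L,x)$. Combining with Step 1 yields, on the above event, $\inf_{v\in L}\big(\sum_{i=1}^n\scpr{x_i,v}^2\big)^{1/2}\ge\tfrac{\theta}{\sqrt n}\big(nQ_{2\theta}(L,x)-\tfrac{2\sqrt n}{\theta}W_n(L,x)-\tfrac12\sqrt n\,u\big)=\theta\sqrt n\,Q_{2\theta}(L,x)-2W_n(L,x)-\tfrac{\theta}{2}u$, which is stronger than the claimed bound (and trivially valid when $W_n(L,x)=\infty$).

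I do not expect a genuine obstacle: the argument is entirely standard. The only delicate point is the bookkeeping of constants so that exactly $2W_n(L,x)$ appears in the bound — the factor $2$ from symmetrization must survive while the $\tfrac1\theta$ generated by contracting the $\tfrac1\theta$-Lipschitz $\psi$ has to cancel the $\theta$ produced in Step 1. This forces one to invoke the \emph{one-sided}, constant-free forms of the symmetrization and contraction inequalities rather than the variants stated with absolute values, and to choose the concentration tuning $r=\tfrac12\sqrt n\,u$ so that the residual term is $\tfrac{\theta}{2}u\le\theta u$.
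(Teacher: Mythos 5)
Your proof is correct and reproduces the standard Koltchinskii--Mendelson small-ball argument exactly as in the cited Tropp reference (the paper does not re-prove this statement, it simply cites it as Prop.~5.1 of Tropp): Lipschitz surrogate for the indicator, McDiarmid's bounded-differences inequality with $t = \tfrac12 u\sqrt{n}$, one-sided symmetrization, and the constant-free Talagrand contraction applied to the $\tfrac1\theta$-Lipschitz function $\psi$. Your attention to invoking the one-sided, constant-free forms of the symmetrization and contraction inequalities is precisely what is needed to land on the stated $2W_n(L,x) + \theta u$, so the bookkeeping checks out.
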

In fact, Theorem~\ref{thm_msbm} is a remarkable result because it holds true without a strong tail assumption on $x$. However, its significance is closely linked to finding an appropriate (upper) bound for the empirical width $W_n(L,x)$, which is usually not a simple task.
In the specific context of this paper, where $x$ exhibits generic Bernstein concentration, the following \emph{generic chaining bound} by Talagrand will prove useful; recall the $\gamma$-functional from Definition~\ref{def_gamma}.
\begin{theorem}[{\citep[Thm.~2.2.23]{tal14}}] \label{thm_tal_bern_bound}
Let $d_1,d_2$ be two pseudo-metrics on a set $L$. Consider a real-valued stochastic process $(X_v)_{v \in L}$ which satisfies the increment condition
\begin{equation} \label{tal_bern_bound_ic}
	\Prob(|X_{v_1}-X_{v_2}| \geq t) \leq 2 \exp\Big(-\min\Big\{\tfrac{t^2}{d_2(v_1,v_2)^2}, \tfrac{t}{d_1(v_1,v_2)}\Big\}\Big)
\end{equation}
for all $v_1, v_2 \in L$ and all $t >0$.
Then, we have that
\begin{equation}
	\E \Big[\sup_{v_1,v_2 \in L} |X_{v_1}-X_{v_2}|\Big] \lesssim \gamma_1(L,d_1)+\gamma_2(L,d_2).
\end{equation}
Moreover, if $(X_v)_{v \in L}$ is symmetric, we have that
\begin{equation} \label{cor_tal_bern_bound_ineq} 
	\E \Big[\sup_{v \in L} X_v\Big] \lesssim \gamma_1(L,d_1)+\gamma_2(L,d_2).
\end{equation}
\end{theorem}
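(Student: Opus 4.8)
This is Talagrand's mixed-tail generic chaining bound, so the plan is to run the standard two-metric chaining argument. First I would reduce to the case of finite $L$: both sides are monotone under passing to finite subsets, so it suffices to prove the bound with a constant that does not depend on the finite set $L' \subseteq L$ (using $\gamma_\alpha(L',d) \le \gamma_\alpha(L,d)$ on one side). Fix near-optimal admissible partition sequences $(\mathcal{A}^{(1)}_s)_{s \geq 0}$ for $(L, d_1)$ and $(\mathcal{A}^{(2)}_s)_{s \geq 0}$ for $(L, d_2)$, i.e., achieving the infima in Definition~\ref{def_gamma} up to a factor $2$, and let $\mathcal{A}_s$ be their common refinement; since $|\mathcal{A}^{(i)}_s| \leq 2^{2^s}$, we have $|\mathcal{A}_s| \leq 2^{2^{s+1}}$, which is admissible up to a harmless shift of the index. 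Pick a base point $v_0 \in L$ and, for each $v$ and $s$, a point $\pi_s(v) \in A_s(v)$ (the cell of $\mathcal{A}_s$ containing $v$), with $\pi_0(v) = v_0$; this gives the telescoping identity $X_v - X_{v_0} = \sum_{s \geq 1}(X_{\pi_s(v)} - X_{\pi_{s-1}(v)})$.

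Next I would run the union bound with a free parameter. Because the partitions are nested, the pair $(\pi_{s-1}(v), \pi_s(v))$ is determined by $A_s(v)$, so there are at most $|\mathcal{A}_s| \leq 2^{2^{s+1}}$ distinct ``links'' at level $s$. For $u \geq 0$ and a link with $d_1$-length $a$ and $d_2$-length $b$, set the threshold $\tau := (2^{s+2}+u)^{1/2}\, b + (2^{s+2}+u)\, a$; then $\min\{\tau^2/b^2, \tau/a\} \geq 2^{s+2}+u$ (with the conventions of the theorem taking care of the cases $a = 0$ or $b = 0$), so the increment condition \eqref{tal_bern_bound_ic} gives $\Prob(|X_{\pi_s(v)} - X_{\pi_{s-1}(v)}| \geq \tau) \leq 2\exp(-(2^{s+2}+u))$. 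Taking a union over the at most $2^{2^{s+1}}$ links at level $s$ and then over $s \geq 1$, the exponent $2^{s+2}$ beats $2^{s+1}\ln 2$, so the resulting bad event has probability at most $C e^{-u}$.

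On the complementary event, summing the thresholds over $s$ and using $(2^{s+2}+u)^{1/2} \leq 2^{(s+2)/2} + \sqrt{u}$ together with $d_i(\pi_{s-1}(v), \pi_s(v)) \leq \Delta_{d_i}(A_{s-1}(v)) \leq \Delta_{d_i}(A^{(i)}_{s-1}(v))$ (both points lie in $A_{s-1}(v)$, which refines the $d_i$-optimal cell), I would bound $\sum_s 2^{s/2} d_2(\pi_{s-1}(v),\pi_s(v)) \lesssim \gamma_2(L, d_2)$ and $\sum_s 2^s d_1(\pi_{s-1}(v),\pi_s(v)) \lesssim \gamma_1(L, d_1)$ (the index shift only costs a constant), and likewise $\sum_s d_i \lesssim \gamma_\alpha(L, d_i)$ trivially. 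Hence, with probability at least $1 - Ce^{-u}$, $\sup_v |X_v - X_{v_0}| \lesssim (1+\sqrt u)\, \gamma_2(L,d_2) + (1+u)\, \gamma_1(L, d_1)$; integrating this tail bound over $u \geq 0$ yields $\E[\sup_v |X_v - X_{v_0}|] \lesssim \gamma_1(L, d_1) + \gamma_2(L, d_2)$, and therefore $\E[\sup_{v_1, v_2 \in L} |X_{v_1} - X_{v_2}|] \leq 2\, \E[\sup_v |X_v - X_{v_0}|] \lesssim \gamma_1(L, d_1) + \gamma_2(L, d_2)$.

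For the symmetric case, $(X_v)_v \stackrel{d}{=} (-X_v)_v$ gives $\sup_v X_v \stackrel{d}{=} -\inf_v X_v$, hence $2\, \E[\sup_v X_v] = \E[\sup_v X_v - \inf_v X_v] = \E[\sup_{v_1, v_2}(X_{v_1} - X_{v_2})] \leq \E[\sup_{v_1, v_2}|X_{v_1} - X_{v_2}|]$, and the claim follows from the bound just proved. The main obstacle is the two-scale bookkeeping: one has to choose a single threshold at each chaining level that simultaneously absorbs the sub-Gaussian increment ($\propto 2^{s/2} d_2$) and the sub-exponential increment ($\propto 2^s d_1$) while the $\approx 2^{2^s}$-fold union bound still leaves an exponentially small margin, and one must thread the free parameter $u$ through so that the result is an honest expectation bound (summing maxima over links level-by-level does not suffice, since that would produce $\sum_s \sup_v \Delta(A_s(v))$ rather than $\sup_v \sum_s \Delta(A_s(v))$).
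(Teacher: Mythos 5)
Your proof is correct and follows the standard generic chaining argument under mixed sub-Gaussian/sub-exponential increments, which is essentially Talagrand's own proof of this result \citep[Thm.~2.2.23]{tal14}; note that the paper cites this theorem without reproducing a proof, so there is no internal argument to compare against. All the key points are handled correctly: the common refinement of two near-optimal admissible partition sequences (with the index shift absorbed by the constant), the level-$s$ threshold $(2^{s+2}+u)^{1/2}d_2 + (2^{s+2}+u)d_1$ that simultaneously defeats both tails against the $2^{2^{s+1}}$-fold union bound, the resulting $e^{-u}$ tail, its integration into an expectation bound, and the reduction of the symmetric case to the diameter bound via $\sup_v X_v \stackrel{d}{=} -\inf_v X_v$.
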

An appropriate combination of Theorem~\ref{thm_msbm} and Theorem~\ref{thm_tal_bern_bound} leads to the following lower bound for the quadratic process:
\begin{proposition}
\label{prop_quadratic_bound}
Let $x, x_1, \dots,x_n$, $K\subset \R^p$, and $\tau>0$ be as in Assumption~\ref{model_setup}. For $t>0$, let $L \subset (K-K) \cap t\Sphere^{p-1}$. 
Then for every $u>0$, we have that
\begin{equation}
	\inf_{v \in L} \bigg(\sum_{i=1}^n \scpr{x_i,v}^2\bigg)^{\frac{1}{2}} \geq t \cdot\Big(\sqrt{n} \cdot \tau \cdot Q_{2\tau}(K^\Delta,x) -C_Q\cdot q_{t,n}^{(g,e)}(L)-\tau \cdot u\Big)
\end{equation}
with probability at least $1-\exp(-u^2 / 2)$, where $C_Q > 0$ is a universal constant.
\end{proposition}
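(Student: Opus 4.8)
The plan is to combine the small-ball method (Theorem~\ref{thm_msbm}) with Talagrand's mixed-tail chaining bound (Theorem~\ref{thm_tal_bern_bound}), the bridge between the two being Lemma~\ref{lemma_bern_sum}, which identifies the increment structure of the symmetrized empirical sum. First I would apply Theorem~\ref{thm_msbm} to the set $L$ with the choice $\theta \coloneqq \tau t > 0$; this yields, on an event of probability at least $1-\exp(-u^2/2)$,
\begin{equation*}
	\inf_{v \in L}\bigg(\sum_{i=1}^n \scpr{x_i,v}^2\bigg)^{1/2} \geq \tau t \sqrt{n}\, Q_{2\tau t}(L,x) - 2\, W_n(L,x) - \tau t\, u ,
\end{equation*}
and it remains to bound $Q_{2\tau t}(L,x)$ from below and the empirical width $W_n(L,x)$ from above.

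The small-ball term is immediate: since $L \subseteq (K-K)\cap t\Sphere^{p-1}$, every $v \in L$ satisfies $v/t \in \spann(K-K)\cap\Sphere^{p-1} = K^\Delta$, hence $\Prob(|\scpr{x,v}|\geq 2\tau t) = \Prob(|\scpr{x,v/t}|\geq 2\tau) \geq Q_{2\tau}(K^\Delta,x)$, and taking the infimum over $v\in L$ gives $Q_{2\tau t}(L,x) \geq Q_{2\tau}(K^\Delta,x)$.

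The core of the argument is the estimate $W_n(L,x) \lesssim t\cdot q_{t,n}^{(g,e)}(L)$. Here I would first exploit that the empirical width is the supremum of a \emph{linear} process: for any $S \subset \R^p$ with $\convh(S) \supseteq L$ one has, writing $S_n \coloneqq \tfrac{1}{\sqrt{n}}\sum_{i=1}^n \varepsilon_i x_i$,
\begin{equation*}
	W_n(L,x) = \E\Big[\sup_{v\in L}\scpr{S_n,v}\Big] \leq \E\Big[\sup_{v\in\convh(S)}\scpr{S_n,v}\Big] = \E\Big[\sup_{v\in S}\scpr{S_n,v}\Big] ,
\end{equation*}
since a linear functional attains the same supremum over $S$ as over $\convh(S)$. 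By Lemma~\ref{lemma_bern_sum}, $S_n$ exhibits generic Bernstein concentration with respect to $\big(C(\gennorm_g+\gennorm_e), \tfrac{C}{\sqrt{n}}\gennorm_e\big)$, so the symmetric process $v\mapsto\scpr{S_n,v}$ satisfies the increment condition \eqref{tal_bern_bound_ic} of Theorem~\ref{thm_tal_bern_bound} with the pseudo-metrics induced by $\tfrac{C}{\sqrt{n}}\gennorm_e$ and $C(\gennorm_g+\gennorm_e)$. The symmetric conclusion \eqref{cor_tal_bern_bound_ineq}, together with the positive homogeneity of the $\gamma$-functional, then gives
\begin{equation*}
	\E\Big[\sup_{v\in S}\scpr{S_n,v}\Big] \lesssim \gamma_1\Big(S,\tfrac{C}{\sqrt{n}}\gennorm_e\Big) + \gamma_2\big(S, C(\gennorm_g+\gennorm_e)\big) \lesssim \tfrac{\gamma_1(S,\gennorm_e)}{\sqrt{n}} + \gamma_2(S,\gennorm_g+\gennorm_e) .
\end{equation*}
Taking the infimum over all admissible $S$ and recalling that $L = L\cap t\Sphere^{p-1}$, the right-hand side is (up to a universal constant) exactly $t\cdot q_{t,n}^{(g,e)}(L)$, which proves the claimed bound on $W_n(L,x)$.

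To finish, I would substitute the two bounds into the small-ball inequality and factor out $t$, turning it into $\inf_{v\in L}(\sum_{i=1}^n\scpr{x_i,v}^2)^{1/2} \geq t\big(\sqrt{n}\,\tau\, Q_{2\tau}(K^\Delta,x) - C_Q\, q_{t,n}^{(g,e)}(L) - \tau u\big)$ for a suitable universal $C_Q>0$ (twice the constant from the chaining bound), on the same event of probability $\geq 1-\exp(-u^2/2)$. The only genuinely delicate step is the empirical-width estimate: one must pass to the skeleton $S$ \emph{before} chaining --- which is legitimate precisely because a linear functional already attains its supremum over $\convh(S)$ on $S$ --- and one must feed the \emph{correct} pair of semi-norms into Talagrand's bound, namely the ones produced by Lemma~\ref{lemma_bern_sum} rather than the original pair $(\gennorm_g,\gennorm_e)$. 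Everything else amounts to tracking universal constants and the scale factor $t$.
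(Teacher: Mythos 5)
Your proposal is correct and mirrors the paper's proof step for step: small-ball method with $\theta = \tau t$, the monotonicity bound $Q_{2\tau t}(L,x) \geq Q_{2\tau}(K^\Delta,x)$, passing to a skeleton $S$ by linearity of $v \mapsto \scpr{S_n,v}$, using Lemma~\ref{lemma_bern_sum} to identify the increments, and then Theorem~\ref{thm_tal_bern_bound} together with homogeneity of the $\gamma$-functionals to recover $t\cdot q_{t,n}^{(g,e)}(L)$. The only cosmetic difference is that you state the infimum over skeletons directly, whereas the paper fixes a near-optimal $\tilde{S}$ (within a factor of $2$) up front; both are equivalent.
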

\begin{proof}
Let $\theta \coloneqq t\cdot \tau$. Then, Theorem~\ref{thm_msbm} states that
\begin{equation}
	\inf_{v \in L} \bigg(\sum_{i=1}^n \scpr{x_i,v}^2\bigg)^{\frac{1}{2}} \geq t \cdot \Big( \sqrt{n} \cdot \tau \cdot Q_{2\theta}(L,x)- \frac{2}{t} \cdot W_n(L,x)- \tau \cdot u \Big)
\end{equation}
holds true with probability at least $1-\exp(-u^2 / 2)$. 
The claim of Proposition~\ref{prop_quadratic_bound} follows from the bounds on $Q_{2\theta}(L,x)$ and $W_n(L,x)$ that we establish in the following.

\paragraph{Lower bound for $Q_{2\theta}$:} We have that
\begin{align}
	Q_{2\theta}(L,x)&=\inf_{v \in L} \Prob(|\scpr{x,v}| \geq 2\theta) \\*
	& \geq\inf_{v \in (K-K)\,\cap\, (t\Sphere^{p-1})} \Prob(|\scpr{x,v}| \geq 2\theta)\\*
	&\geq\inf_{\tilde{v} \in \spann(K-K)\,\cap\, \Sphere^{p-1}} \Prob(|\scpr{x,t \tilde{v}}| \geq 2t\cdot \tau) \\*
	&=\inf_{\tilde{v} \in K^\Delta} \Prob(|\scpr{x,\tilde{v}}| \geq 2\tau) =Q_{2\tau}(K^\Delta,x).
\end{align}

\paragraph{Upper bound for $W_n$:} According to the definition of the local q-complexity, there exists a set $\tilde{S}\subset \R^p$ with
\begin{equation}
	\convh(\tilde{S}) \supset L \cap t \Sphere^{p-1} \ ( = L)
\end{equation}
that satisfies
\begin{equation}\label{prop_quadratic_bound_2_approx_property}
	\frac{\gamma_1(\tilde{S}, \gennorm_e)}{\sqrt{n}}+\gamma_2(\tilde{S}, \gennorm_g+\gennorm_e) \leq 2t\cdot q_{t,n}^{(g,e)}(L).
\end{equation}
Now let $v\in L$. Since $L \subset \convh(\tilde{S})$, the point $v$ can be expressed as a convex combination in $\tilde{S}$:
\begin{equation}\label{v_convex_combination}
	v=\sum_{j=1}^M \lambda_j s_j, \quad \text{where } \lambda_j \geq 0, \quad\sum_{j=1}^M \lambda_j =1, \quad s_j \in \tilde{S}.
\end{equation}
Conditioning on the random variables $\varepsilon_i$, $x_i$, the function
\begin{equation}
	h: \R^p \to \R, \quad w \mapsto \SCPR[\Big]{\tfrac{1}{\sqrt{n}} \sum_{i=1}^n \varepsilon_i x_i,\, w}
\end{equation}
is linear. Hence, we have that
\begin{equation}
	h(v) =\sum_{j=1}^M \lambda_j h(s_j) \in \convh\big(\{h(s_1),\dots,h(s_M)\}\big).
\end{equation}
In particular, at least one of the $h(s_i)$ is not smaller than $h(v)$. This implies
\begin{equation}
	\sup_{v \in L} \SCPR[\Big]{\tfrac{1}{\sqrt{n}} \sum_{i=1}^n \varepsilon_i x_i,\, v} \leq \sup_{v \in \tilde{S}} \SCPR[\Big]{\tfrac{1}{\sqrt{n}} \sum_{i=1}^n \varepsilon_i x_i,\, v}
\end{equation}
and therefore $W_n(L,x) \leq W_n(\tilde{S},x)$. To obtain an upper bound for $W_n(\tilde{S},x)$, we consider the associated stochastic process
\begin{equation}
	(X_v)_{v \in \tilde{S}} \coloneqq \bigg(\SCPR[\Big]{\tfrac{1}{\sqrt{n}} \sum_{i=1}^n \varepsilon_i x_i, \, v}\bigg)_{v \in \tilde{S}}
\end{equation}
and intend to apply Theorem~\ref{thm_tal_bern_bound}: Since the $x_i$ and the $\varepsilon_i$ are independent, the distribution of $(X_v)_{v \in \tilde{S}}$ only depends on the individual distributions of the $x_i$ and the $\varepsilon_i$. Observing that
\begin{equation}
	(-X_v)_{v \in \tilde{S}} = \bigg(\SCPR[\Big]{\tfrac{1}{\sqrt{n}} \sum_{i=1}^n (-\varepsilon_i) x_i,\, v } \bigg)_{v \in \tilde{S}}
\end{equation}
and that $-\varepsilon_i$ has the same distribution as $\varepsilon_i$ for each $i$, we conclude that $(X_v)_{v \in \tilde{S}}$ is indeed symmetric.
Regarding the increment condition, let $v_1, v_2 \in \tilde{S}$ and set $v \coloneqq v_1-v_2$. Then, Lemma~\ref{lemma_bern_sum} implies that
\begin{equation}
	\Prob(|X_{v_1}-X_{v_2}| \geq t) \leq 2\exp\Big(-\min\Big\{\tfrac{t^2}{C^2(\norm{v}_g+\norm{v}_e)^2},\tfrac{\sqrt{n}t}{C\norm{v}_e}\Big\}\Big).
\end{equation}
Finally, Theorem~\ref{thm_tal_bern_bound} yields
\begin{align}
	W_n(\tilde{S},x) &\lesssim \gamma_1(\tilde{S}, \tfrac{1}{\sqrt{n}}\gennorm_e)+\gamma_2(\tilde{S},\gennorm_g+\gennorm_e)\\*
	&= \frac{\gamma_1(\tilde{S}, \gennorm_e)}{\sqrt{n}}+\gamma_2(\tilde{S},\gennorm_g+\gennorm_e) \stackrel{{\eqref{prop_quadratic_bound_2_approx_property}}}{\leq} 2t\cdot q_{t,n}^{(g,e)}(L),
\end{align}
and therefore $W_n(L,x) \leq W_n(\tilde{S},x) \lesssim t\cdot q_{t,n}^{(g,e)}(L)$.
\end{proof}

\subsubsection{The Multiplier Process}
\label{subsec:proofs:multiplierprocess}

We now turn our attention to the multiplier process. Setting $v \coloneqq \beta-\beta^\natural$ and $\xi_i \coloneqq \scpr{x_i,\beta^\natural}-\nobreak y_i$, the process takes the form 
\begin{equation}
	\mathcal{M}(\beta,\beta^\natural)=\tfrac{2}{n}\sum_{i=1}^n \xi_i \cdot \scpr{x_i, v}.
\end{equation}
Unlike the quadratic process, the multiplier process is not intrinsic to the hypothesis set~$K$, but (empirically) describes an interaction of the difference of two hypotheses $\beta, \beta^\natural \in K$ with the model mismatch $\xi \coloneqq \scpr{x,\beta^\natural} - y$ (cf.~Appendix~\ref{subsec:app:rem_main}\ref{rmk_local_mendelson}).

In order to control the multiplier process, we adapt another result by \citet{men16}, which is based on a refined chaining approach: Instead of applying the traditional generic chaining to the function class $\{\xi \cdot \scpr{\cdot,\beta}\MID \beta \in K\}$, Mendelson isolates the effect of the multiplier term $\xi$, which leads to a bound in terms of $\norm{\xi}_{L^q}$ and geometric properties of the class $\{\scpr{\cdot,\beta}\MID \beta \in K\}$. In fact, his result holds true for more general (non-linear) function classes, but in view of the objectives of this article, we only recite the special case of linear functions. In order to state this result, several definitions are required.
\begin{definition}[\protect{\citep[Def.~1.6]{men16}}] 
	For a real-valued random variable $Z$ and $q \geq 1$, we define the \emph{$(q)$-norm} by
	\begin{equation}
		\norm{Z}_{(q)} \coloneqq \sup_{1 \leq r \leq q} \frac{\norm{Z}_{L^r}}{\sqrt{r}}.
	\end{equation}
\end{definition} 
It is worth comparing the above definition to the moment characterization of sub-Gaussian variables (see Proposition~\ref{prop_sg_se_properties}\ref{prop_sg_se_mom}). \citet[p.~3658]{men16} remarks that the $(q)$-norm \emph{``measure[s] the subgaussian behaviour of the functions involved, but only up to a fixed level, rather than at every level''}.
\begin{definition} \label{def_admiss_seq}
	Let $L$ be a set. We call a sequence $(L_s)_{s \in \N} \subset 2^L$ of subsets of $L$ an \emph{admissible approximation sequence} if $|L_0|=1$ and $|L_s| \leq 2^{2^s}$ for $s \geq 1$. 
\end{definition}
The following definition introduces a relative of Talagrand's $\gamma$-functional (see Definition~\ref{def_gamma}). For this purpose, also recall the notation of dual vectors from Subsection~\ref{subsec:intro:notation}; more precisely, we equip $\R^p$ with the pushforward measure $\Prob \circ x^{-1}$ of a (generic) random vector $x \in \R^p$, so that for every $v \in \R^p$, we have $\norm{v^*}_{L^q}^q = \E[|\scpr{x, v}|^q]$.
\begin{definition}[\protect{\citep[Def.~1.7]{men16}}] For $L \subset \R^p$ and $u \geq 1$, we define
\begin{equation}
	\tilde{\Lambda}_u(L,x) \coloneqq \inf \Big\{\sup_{v \in L} \norm{(\pi_0v)^*}_{(u^2)} + \sup_{v \in L} \sum_{s \geq 0} 2^{s/2}\cdot \norm{v^*-(\pi_s v)^*}_{(u^2 2^s)}\Big\},
\end{equation}
where the infimum is taken over all admissible approximation sequences $(L_s)_{s \in \N}$ and $(\pi_s v)^*$ is a nearest point to $v^*$ in $(L_s)^*$ with respect to the $(u^2 2^s)$-norm. 
\end{definition}
With these definitions at hand, we can now state Mendelson's result, which provides a powerful concentration inequality for multiplier processes.
We emphasize that the feature vector $x$ and the multiplier $\xi$ are \emph{not} necessarily independent here, which is crucial for our analysis and an important difference to related results in the literature, e.g., see~\citep{hw19}.
\begin{theorem}[{\citep[Thm.~1.9]{men16}}] \label{thm_men_general_bound}
Let $L\subset \R^p$ and let $(x, \xi) \in \R^p \times \R$ be a random pair such that $\norm{\xi}_{L^q} < \infty$ for some $q > 2$. We assume that $(x_1, \xi_1), \dots, (x_n, \xi_n)$ are independent copies of $(x, \xi)$. Then there exist constants $C_0,C_1,\dots,C_4 > 0$ (only depending on $q$) such that for every $w,u > C_0$, the following holds true with probability at least $1-C_1\cdot w^{-q} \cdot n^{-(q/2)+1}\cdot \log_q(n)-4\exp(-C_2\cdot u^2)$:
\begin{equation}
	\sup_{v \in L} \bigg| \sum_{i=1}^n (\xi_i \cdot \scpr{x_i,v} - \E[\xi \cdot v^*]) \bigg| \leq C_3 \cdot w \cdot u \cdot \sqrt{n} \cdot \norm{\xi}_{L^q} \cdot \tilde{\Lambda}_{C_4 u}(L).
\end{equation}
\end{theorem}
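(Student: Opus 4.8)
The plan is to prove this by a generic-chaining argument in which the multiplier~$\xi$ is isolated from the geometry of the linear class $\{v^* : v \in L\}$, following the strategy of~\citet{men16}. First I would fix an admissible approximation sequence $(L_s)_{s \in \N}$ (cf.~Definition~\ref{def_admiss_seq}) and, for each $v \in L$, use the telescoping identity $v^* = (\pi_0 v)^* + \sum_{s \geq 0}\big((\pi_{s+1}v)^* - (\pi_s v)^*\big)$, where $\pi_s v$ is a nearest point to $v$ in $L_s$ in the $(u^2 2^s)$-norm. Substituting this into $\frac{1}{\sqrt n}\sum_{i=1}^n\big(\xi_i \scpr{x_i,v} - \E[\xi\cdot v^*]\big)$ splits the process into a ``root'' contribution controlled by $\sup_{v \in L}\norm{(\pi_0 v)^*}_{(u^2)}$ and, for each scale $s$, a contribution from the finitely many ``links'' $h = (\pi_{s+1}v)^* - (\pi_s v)^*$; the aim is that after taking the infimum over admissible sequences the resulting deterministic bound collapses, up to a constant rescaling of $u$, exactly to $\tilde{\Lambda}_{C_4 u}(L)$.

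The core estimate is a deviation bound for a single centered multiplier variable $Z = \xi h(x) - \E[\xi h(x)]$ and its i.i.d.\ copies. Here I would truncate the multiplier at a level $\lambda$ of order $w\cdot\norm{\xi}_{L^q}\cdot n^{1/q}$ and split $\xi_i h(x_i) = \xi_i\mathds{1}\{|\xi_i| \leq \lambda\}h(x_i) + \xi_i\mathds{1}\{|\xi_i| > \lambda\}h(x_i)$. For the bulk part the multiplier is bounded, so after symmetrizing the pairs $(\xi_i,x_i)$ with Rademacher signs and conditioning on the data, Bernstein's inequality (Theorem~\ref{thm_bern_ineq}) gives a mixed sub-Gaussian/sub-exponential tail whose \emph{Gaussian} regime is valid only up to a confidence level comparable to the chosen moment order --- which is precisely why the $(q)$-norm, and not the full $\psi_2$-norm, is the quantity one must attach to each link. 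For the tail part one notes that the number of indices with $|\xi_i| > \lambda$ is stochastically small: Markov's inequality applied to the $q$-th moment gives $\E\big[\#\{i : |\xi_i| > \lambda\}\big] \leq n\norm{\xi}_{L^q}^q\lambda^{-q} \lesssim w^{-q} n^{1-q/2}$, and on the complementary event one bounds $|\xi_i h(x_i)|$ crudely. This is exactly the origin of the polynomial failure term $C_1 w^{-q} n^{-(q/2)+1}$, while $4\exp(-C_2 u^2)$ comes from the Bernstein deviation at level~$u$.

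It then remains to assemble the chain. At scale $s$ I would apply the single-link estimate with moment and confidence level $u^2 2^s$; the $\leq 2^{2^{s+1}}$ links at that scale are absorbed by the factor $\exp(-u^2 2^s)$ as long as $u \gtrsim 1$, and summing $\sum_{s\geq 0} 2^{s/2}\norm{(\pi_{s+1}v)^* - (\pi_s v)^*}_{(u^2 2^s)}$ together with the root term reproduces, after the infimum over admissible approximation sequences, the functional $\tilde{\Lambda}_{C_4 u}(L)$. Only scales with $2^s \lesssim \log n$ carry a non-negligible polynomial failure probability, and aggregating these $\asymp \log_q n$ contributions yields the $\log_q(n)$ factor in the probability bound. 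The hard part will be precisely this bookkeeping: the truncation levels must be chosen consistently with the growing $(u^2 2^s)$-norms so that the per-scale ``few large $\xi_i$'' failure events telescope to the stated bound; and, since $\xi$ and $x$ need not be independent, the symmetrization is over the pairs and the bulk Bernstein step is carried out conditionally on the $(\xi_i,x_i)$, which turns the sub-Gaussian metric governing the conditional process into a random metric of product type whose own concentration must be folded back into the same chain. Controlling that recursion cleanly is the technical heart of the argument; the rest is a careful but essentially routine combination of truncation, Bernstein's inequality, and generic chaining.
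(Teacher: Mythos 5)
The paper does not prove this theorem: it is stated verbatim as \citet[Thm.~1.9]{men16} and cited as an external result. The only contact the paper makes with Mendelson's internal argument is in the proof of Theorem~\ref{thm_men_subexp_bound} (Subsection~\ref{subsec:proofs:multiplierprocess}), where ``the last step'' of Mendelson's proof is adapted by substituting the estimates of Corollary~\ref{cor_xi_conc} for the quantities $\norm{(\xi_i)_{i=1}^n}_2$ and $\bigl(\sum_{i \geq j_0}(\xi^*_i)^{2r}\bigr)^{1/(2r)}$; those quantities, together with the decreasing rearrangement $\xi^*_i$ and the cutoff index $j_0$, are the paper's only pointers into what Mendelson actually does. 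There is therefore no proof in this paper against which to compare your sketch.

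Taking your sketch on its own terms, the overall shape --- chain along an admissible approximation sequence, track only bounded sub-Gaussian behaviour via the $(q)$-norm attached to each link, split the multiplier contribution into a bulk and a few large terms, aggregate per-scale polynomial failures into the $\log_q n$ factor --- does match the spirit of Mendelson's proof. Two details, though, do not work as written. First, once you symmetrize the pairs and condition on the data, $\sum_i \varepsilon_i \xi_i h(X_i)$ is a Rademacher sum with fixed weights $a_i = \xi_i h(X_i)$, whose conditional tail is Hoeffding's $\exp(-t^2/2\norm{a}_2^2)$ --- purely sub-Gaussian --- so Theorem~\ref{thm_bern_ineq} (a mixed-tail bound for random vectors with independent sub-exponential coordinates) is not the right tool, and the $(q)$-norm enters not through a Bernstein-type conditional tail but through moment control of the random conditional variance $\norm{a}_2^2 = \sum_i \xi_i^2 h(X_i)^2$ at level $u^2 2^s$. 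Second, Mendelson does not truncate the multiplier at a deterministic level $\lambda$; he splits at a cutoff index in the decreasing rearrangement $(\xi^*_i)$, which is precisely why the quantity $\bigl(\sum_{i \geq j_0}(\xi^*_i)^{2r}\bigr)^{1/(2r)}$ surfaces in the paper's adaptation. These are not cosmetic choices: they are exactly where the dependence between $\xi$ and $x$ is absorbed, and as your final paragraph concedes, that step is the technical heart of the argument --- named in your sketch, but not resolved.
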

The term $C_1 \cdot w^{-q} \cdot n^{-(q/2)+1}\cdot \log_q(n)$ in the probability of success arises from a concentration inequality for the random vector $(\xi_i)_{i=1}^n \in \R^n$, for which we only assume that the $q$-th moment of its components exists for some $q>2$. In fact, better rates can be achieved by more restrictive assumptions on the tails of $\xi$. For example, Mendelson proves a sub-Gaussian variant of Theorem~\ref{thm_men_general_bound} using Bernstein's inequality (see \citep[Thm.~4.4]{men16}). If we assume that $\xi$ is just sub-exponential (as in Assumption~\ref{model_setup}), Bernstein's inequality cannot be applied to the squared coordinates appearing in the Euclidean norm of $(\xi_i)_{i=1}^n$. However, the following recent result of \citet{gss19} allows us to derive a concentration inequality in the sub-exponential case:
\begin{theorem}[{\citep[Prop.~1.1]{gss19}}] \label{thm_goetze}
	Let $X_1,\dots,X_n$ be independent, centered random variables with $\sigma_i^2 \coloneqq \E[X_i^2] < \infty$ and $\norm{X_i}_{\psi_\alpha} \leq R$ for some $\alpha \in (0,1] \cup \{2\}$.\footnote{As discussed by \citet{gss19}, our Definition~\ref{def_sg_se} can be extended to the more general case $\alpha>0$, leading to the notion of \emph{$\alpha$-sub-exponential random variables}. But note that if $0< \alpha <1$, the exponential Orlicz ``norm'' $\gennorm_{\psi_\alpha}$ violates the triangle inequality.} Let $B = [b_{ij}] \in \R^{n \times n}$ be a symmetric matrix. Then there exists a universal constant $C > 0$ such that for every $t>0$, we have that
	\begin{equation}
		\Prob\bigg( \bigg| \sum_{i,j=1}^n b_{ij} X_i X_j - \sum_{i=1}^n \sigma_i^2 b_{ii} \bigg| \geq t \bigg) \leq 2 \exp \Big( - C \cdot \min\Big\{\tfrac{t^2}{R^4 \norm{B}^2_F}, \Big( \tfrac{t}{R^2 \norm{B}_{\op}}\Big)^{\frac{\alpha}{2}} \Big\} \Big).
	\end{equation}
\end{theorem}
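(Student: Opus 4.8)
The plan is to split the centered quadratic form into a diagonal and an off-diagonal part, to treat the diagonal part as a Bernstein-type sum of independent random variables, and to treat the off-diagonal part as a decoupled bilinear chaos that one conditions on. The case $\alpha = 2$ is the classical Hanson--Wright inequality (see, e.g., \citet[Thm.~6.2.1]{ver18}), so assume $\alpha \in (0,1]$. With $\widetilde B \coloneqq B - \operatorname{diag}(B)$ I would write
\[
	\sum_{i,j=1}^n b_{ij}X_iX_j - \sum_{i=1}^n \sigma_i^2 b_{ii} \;=\; \underbrace{\sum_{i,j=1}^n \widetilde b_{ij}X_iX_j}_{=:\, Z_{\mathrm{off}}} \;+\; \underbrace{\sum_{i=1}^n b_{ii}(X_i^2-\sigma_i^2)}_{=:\, Z_{\mathrm{diag}}},
\]
and bound each summand by the claimed mixed tail; this suffices since $\norm{(b_{ii})_i}_2 \leq \norm{B}_F$, $\norm{(b_{ii})_i}_\infty \leq \norm{B}_{\op}$, $\norm{\widetilde B}_F \leq \norm{B}_F$, and $\norm{\widetilde B}_{\op} \leq 2\norm{B}_{\op}$.

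For $Z_{\mathrm{diag}}$, note that $X_i^2$ is $(\alpha/2)$-sub-exponential with $\norm{X_i^2}_{\psi_{\alpha/2}} = \norm{X_i}_{\psi_\alpha}^2 \leq R^2$, so the summands $b_{ii}(X_i^2-\sigma_i^2)$ are independent, centered, and of $\psi_{\alpha/2}$-norm $\lesssim \abs{b_{ii}}R^2$. A Bernstein-type bound for weighted sums of independent centered $\psi_\beta$ variables with $\beta = \alpha/2$ (which follows from the moment growth $\norm{\sum_i a_i Y_i}_{L^p} \lesssim (\sqrt{p}\,\norm{a}_2 + p^{1/\beta}\norm{a}_\infty)\max_i\norm{Y_i}_{\psi_\beta}$ and a moment-to-tail conversion) then produces exactly $\exp(-c\min\{t^2/(R^4\norm{(b_{ii})_i}_2^2), (t/(R^2\norm{(b_{ii})_i}_\infty))^{\alpha/2}\})$, which is of the desired form after the norm comparisons above.

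For $Z_{\mathrm{off}}$ I would decouple: since $\widetilde B$ has zero diagonal, the standard decoupling inequality for quadratic chaos (de la Peña--Montgomery-Smith; see also \citet[Ch.~6]{ver18}) gives universal constants with $\Prob(\abs{Z_{\mathrm{off}}}\geq t) \leq C_1\Prob(\abs{W}\geq t/C_2)$, where $W \coloneqq \scpr{\widetilde B X', X}$ for an independent copy $X'$ of $X$. Conditioning on $X'$, the variable $W$ is a weighted sum of the independent centered $\psi_\alpha$ variables $X_i$, so the linear $\psi_\alpha$-Bernstein inequality yields
\[
	\Prob\big(\abs{W}\geq s \,\big|\, X'\big) \;\leq\; 2\exp\Big(-c\min\Big\{\tfrac{s^2}{R^2\norm{\widetilde B X'}_2^2},\; \big(\tfrac{s}{R\norm{\widetilde B X'}_\infty}\big)^{\alpha}\Big\}\Big),
\]
and one integrates out $X'$ using that $\norm{\widetilde B X'}_2$ concentrates around its mean (which is $\lesssim R\norm{B}_F$, because $\E\norm{\widetilde B X'}_2^2 = \sum_i(\widetilde B^2)_{ii}\sigma_i^2 \lesssim R^2\norm{B}_F^2$) and that $\norm{\widetilde B X'}_\infty = \max_i\scpr{\widetilde B_i, X'}$ is of scale $\lesssim R\norm{B}_{\op}$ with $\psi_\alpha$-type upper tails. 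The crucial device here is a square-root split of the conditioning variable: for the operator scale, writing $M \coloneqq \norm{\widetilde B X'}_\infty/(R\norm{B}_{\op})$ and $\tau \coloneqq t/(R^2\norm{B}_{\op})$, splitting on $\{M \leq \sqrt{\tau}\}$ (where the conditional exponent is $\gtrsim \tau^{\alpha/2}$) against $\{M > \sqrt{\tau}\}$ (of probability $\lesssim \exp(-c\tau^{\alpha/2})$) produces the final exponent $\alpha/2$ out of the conditional exponent $\alpha$.

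The main obstacle is to obtain the Frobenius-scale term with the genuinely sub-Gaussian exponent $2$ — i.e.\ $\exp(-ct^2/(R^4\norm{B}_F^2))$ for small and moderate $t$ — and to keep the operator-scale term free of spurious $\log n$ factors: the square-root split degrades the Frobenius exponent from $2$ to $1$, and a crude $L^p$-of-a-maximum estimate for $\norm{\widetilde B X'}_\infty$ (as well as a naive bootstrap on $\norm{\widetilde B X'}_2$) costs a logarithm, so the Frobenius scale genuinely requires controlling all $L^p$-norms simultaneously. I would therefore route the argument through the functional-analytic machinery of \citet{gss19}: $\alpha$-sub-exponential product measures satisfy a modified logarithmic Sobolev (Beckner-type) inequality, and iterating it along the derivative tensors of $f(x) = x^{\T}Bx$ — for which $\nabla^2 f \equiv 2B$ is constant and $\E[\nabla f(X)] = 0$ — yields the two-term tail in one go, with $\norm{\nabla^2 f}_F \asymp \norm{B}_F$ carrying the exponent $2$ and $\norm{\nabla^2 f}_{\op} \asymp \norm{B}_{\op}$ the exponent $\alpha/2$. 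The explicit decomposition above then serves as a cross-check and recovers the operator-scale term by elementary means.
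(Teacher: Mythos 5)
This statement is imported verbatim from \citet[Prop.~1.1]{gss19} and is not proved anywhere in the present paper, so there is no internal proof to compare your sketch against; I therefore assess it on its own terms.

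Your diagonal part is sound: $\norm{X_i^2}_{\psi_{\alpha/2}} = \norm{X_i}_{\psi_\alpha}^2$, the summands $b_{ii}(X_i^2-\sigma_i^2)$ are independent, centered, and $\psi_{\alpha/2}$ with quasi-norm $\lesssim |b_{ii}|R^2$, and a weighted Bernstein bound with $\beta=\alpha/2$ gives a two-scale tail with the correct $\alpha/2$ exponent after the norm comparisons $\norm{(b_{ii})_i}_2\leq\norm{B}_F$, $\norm{(b_{ii})_i}_\infty\leq\norm{B}_{\op}$. The off-diagonal part, however, has a real gap that you yourself name and then do not close. After decoupling and conditioning on $X'$, the event-splitting cutoff for $\norm{\widetilde{B}X'}_\infty$ that converts the conditional exponent $\alpha$ into the target $\alpha/2$ is $K_2\asymp\sqrt{t\norm{B}_{\op}}$, and a union bound over the $n$ coordinates of $\widetilde{B}X'$ produces an $n$-prefactor that is only absorbable once $t\gtrsim R^2\norm{B}_{\op}(\log n)^{2/\alpha}$. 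Worse, the event $\{\norm{\widetilde{B}X'}_2\lesssim R\norm{B}_F\}$ has only constant probability, so the Frobenius-scale sub-Gaussian term requires a $t$-dependent cutoff whose failure probability is itself a Hanson--Wright tail for the quadratic form $\norm{\widetilde{B}X'}_2^2$ --- the very statement under proof. Without a bootstrap that tracks all $L^p$-moments of the chaos simultaneously, the argument is either circular or loses a logarithm, and this is exactly what you concede. Your remedy, to ``route the argument through the functional-analytic machinery of \citet{gss19}'' (iterated modified log-Sobolev/Beckner estimates along the Hessian of $x\mapsto\scpr{Bx,x}$), is a citation rather than a proof --- it is the same move the paper itself makes by stating the result as \citet[Prop.~1.1]{gss19}. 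So the proposal correctly locates the difficulty, and the diagonal treatment and the elementary decoupling reductions are useful warm-ups, but all of the genuinely nontrivial content for $\alpha\in(0,1]$ is offloaded to the cited source.
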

\begin{corollary} \label{cor_xi_conc}
Let $\xi_1,\dots,\xi_n$ be i.i.d.\ sub-exponential random variables. Then there exists a universal constant $C > 0$ such that with probability at least $1-2\exp(-C \cdot \sqrt{n})$, we have that
\begin{equation}
	\norm{(\xi_i)_{i=1}^n}_2=\bigg(\sum_{i=1}^n \xi_i^2 \bigg)^{\frac{1}{2}} \lesssim\sqrt{n} \cdot \norm{\xi_1}_{\psi_1}.
\end{equation}
\end{corollary}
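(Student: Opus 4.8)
The plan is to apply the Hanson--Wright-type inequality of \citet{gss19} (Theorem~\ref{thm_goetze}) with exponent $\alpha = 1$ and the matrix $B = I_n$, which turns the quadratic form $\sum_{i,j} b_{ij} X_i X_j$ into $\sum_{i=1}^n X_i^2$. The only mismatch with the hypotheses of Theorem~\ref{thm_goetze} is that the $\xi_i$ need not be centered, so first I would split off the mean. Write $m \coloneqq \E[\xi_1]$ and $\bar\xi_i \coloneqq \xi_i - m$; then $\sum_{i=1}^n \xi_i^2 \leq 2\sum_{i=1}^n \bar\xi_i^2 + 2n m^2$. Since $|m| \leq \norm{\xi_1}_{L^1} \lesssim \norm{\xi_1}_{\psi_1}$ by the moment characterization of sub-exponential variables (Proposition~\ref{prop_sg_se_properties}\ref{prop_sg_se_mom}), and a deterministic constant $c$ satisfies $\norm{c}_{\psi_1} = |c|/\log 2$, the term $2 n m^2$ is already of the desired order $\lesssim n \norm{\xi_1}_{\psi_1}^2$, and likewise $R \coloneqq \norm{\bar\xi_1}_{\psi_1} \leq \norm{\xi_1}_{\psi_1} + \norm{m}_{\psi_1} \lesssim \norm{\xi_1}_{\psi_1}$. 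It thus suffices to bound $\sum_{i=1}^n \bar\xi_i^2$.

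Next, apply Theorem~\ref{thm_goetze} to the i.i.d.\ centered variables $\bar\xi_1,\dots,\bar\xi_n$ with $B = I_n$, so that $\norm{B}_F = \sqrt{n}$ and $\norm{B}_{\op} = 1$. The diagonal correction term is $\sum_{i=1}^n \sigma_i^2 b_{ii} = n \,\E[\bar\xi_1^2]$, and $\E[\bar\xi_1^2] = \norm{\bar\xi_1}_{L^2}^2 \lesssim \norm{\bar\xi_1}_{\psi_1}^2 \lesssim \norm{\xi_1}_{\psi_1}^2$, again by Proposition~\ref{prop_sg_se_properties}\ref{prop_sg_se_mom}. Choosing the deviation parameter $t \coloneqq c \cdot n R^2$ for a fixed numerical constant $c > 0$, the two arguments of the minimum in Theorem~\ref{thm_goetze} become $t^2/(R^4 n) = c^2 n$ and $(t/R^2)^{1/2} = \sqrt{c n}$, whence the minimum is $\gtrsim \sqrt{n}$ with an implicit constant depending only on $c$. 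Consequently, with probability at least $1 - 2\exp(-C \sqrt{n})$, we obtain $\sum_{i=1}^n \bar\xi_i^2 \leq n\,\E[\bar\xi_1^2] + c n R^2 \lesssim n\,\norm{\xi_1}_{\psi_1}^2$. Plugging this into the elementary bound from the first step gives $\sum_{i=1}^n \xi_i^2 \lesssim n\,\norm{\xi_1}_{\psi_1}^2$ on the same event, and taking square roots finishes the proof.

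I do not anticipate a genuine obstacle once Theorem~\ref{thm_goetze} is granted; the whole point---as noted right after the theorem statement---is that a naive Bernstein estimate is unavailable here, since the squared summands $\xi_i^2$ are only $\alpha$-sub-exponential with $\alpha = 1/2$ rather than $1$, and it is precisely the mixed $\psi_1$/operator-norm tail of \citet{gss19} that resolves this. The mild points to watch are the bookkeeping of universal constants (in particular checking that removing the mean does not change the order of magnitude of either $R$ or the $m^2$-term) and noting that we only need the upper tail $\sum_i \bar\xi_i^2 \leq n\,\E[\bar\xi_1^2] + t$, which is contained in the two-sided estimate of Theorem~\ref{thm_goetze}.
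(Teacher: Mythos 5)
Your proof is correct and takes essentially the same route as the paper: center the $\xi_i$, apply Theorem~\ref{thm_goetze} with $\alpha=1$, $B = I_n$, and a deviation level $t \asymp nR^2$, then bound the variance term and the $\psi_1$-norm of the centered variable via Proposition~\ref{prop_sg_se_properties}\ref{prop_sg_se_mom}. The only (cosmetic) difference is that the paper undoes the centering at the very end via the triangle inequality $\norm{(\xi_i)_i}_2 \leq \norm{(\xi_i-m)_i}_2 + \norm{(m)_i}_2$ and cites $\norm{X - \E[X]}_{\psi_1} \lesssim \norm{X}_{\psi_1}$ from \citet{ver18}, whereas you use $(a+b)^2 \leq 2a^2+2b^2$ and the explicit $\psi_1$-norm of a constant.
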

\begin{proof}
We apply Theorem~\ref{thm_goetze} for $X_i \coloneqq \xi_i - \E[\xi_i]$, $B \coloneqq I_n$, $t \coloneqq R^2\cdot n$, and obtain
\begin{align}
	\Prob(E) \coloneqq \Prob \bigg( \bigg|\sum_{i=1}^n (\xi_i - \E[\xi_i])^2 - n\cdot \sigma_1^2\bigg| \geq n \cdot \norm{\xi_1 - \E[\xi_1]}^2_{\psi_1} \bigg) &\leq 2 \exp(- C \cdot \min\{n,\sqrt{n} \} ) \\*
	&=2 \exp(- C \cdot \sqrt{n} ).
\end{align}
Let us assume that the complement of the event $E$ has occurred. Then, using Proposition~\ref{prop_sg_se_properties}\ref{prop_sg_se_mom}, it follows that
\begin{equation}
\sum_{i=1}^n (\xi_i - \E[\xi_i])^2 < n\cdot( \norm{\xi_1 - \E[\xi_1]}^2_{\psi_1}+\sigma_1^2) \lesssim n \cdot \norm{\xi_1 - \E[\xi_1]}^2_{\psi_1} \lesssim  n \cdot \norm{\xi_1}^2_{\psi_1},
\end{equation}
where the last step is due to 
\begin{equation}
	\norm{X-\E[X]}_{\psi_\alpha} \lesssim \norm{X}_{\psi_\alpha}
\end{equation}
for $\alpha \in \{1,2\}$; see~\citep[Lem.~2.6.8]{ver18} for a proof, which also works for $\alpha=1$.
Consequently, we have that
\begin{equation}
	\norm{(\xi_i)_{i=1}^n}_2 \leq \norm{(\xi_i- \E[\xi_i])_{i=1}^n}_2+\norm{(\E[\xi_i])_{i=1}^n}_2 \lesssim \sqrt{n} \cdot \norm{\xi_1}_{\psi_1}.
\end{equation}
\end{proof}

The bound of Corollary~\ref{cor_xi_conc} leads to the following sub-exponential version of Theorem~\ref{thm_men_general_bound}:
\begin{theorem} \label{thm_men_subexp_bound}
	Let $L\subset \R^p$ and let $(x, \xi) \in \R^p \times \R$ be a random pair such that $\norm{\xi}_{\psi_1} < \infty$. We assume that $(x_1, \xi_1), \dots, (x_n, \xi_n)$ are independent copies of $(x, \xi)$.
	Then there exist universal constants $C, C', \tilde{C}>\nolinebreak 0$ such that for every $u \geq 8$, the following holds true with probability at least $1-2\exp(-C\cdot \sqrt{n})-4\exp(-C\cdot u^2)$:
	\begin{equation}
		\sup_{v \in L} \bigg| \sum_{i=1}^n (\xi_i \cdot \scpr{x_i,v} - \E[\xi \cdot v^*]) \bigg| \leq C' \cdot u \cdot \sqrt{n} \cdot \norm{\xi}_{\psi_1} \cdot \tilde{\Lambda}_{\tilde{C} u}(L,x).
	\end{equation}
\end{theorem}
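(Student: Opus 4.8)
The plan is to re-derive Theorem~\ref{thm_men_subexp_bound} by revisiting the proof of Theorem~\ref{thm_men_general_bound} and replacing only the single step in which the tail behaviour of $\xi$ enters. Taking Theorem~\ref{thm_men_general_bound} as a black box does not help: even with $q$ chosen as a fixed constant $>2$ (which is legitimate since $\xi$ is sub-exponential, hence has all moments), the failure probability $C_1\cdot w^{-q}\cdot n^{-(q/2)+1}\cdot\log_q(n)$ stays polynomial in $n$, whereas we want it exponential in $\sqrt n$. As the discussion after Theorem~\ref{thm_men_general_bound} explains, this polynomial term is caused by the need to control the Euclidean norm of the multiplier vector $(\xi_i)_{i=1}^n$: with only a $q$-th moment at hand, Markov's inequality yields $\norm{(\xi_i)_{i=1}^n}_2\lesssim w\cdot\sqrt n\cdot\norm{\xi}_{L^q}$ merely on an event of probability $1-C_1\cdot w^{-q}\cdot n^{-(q/2)+1}\cdot\log_q(n)$, and $w$ is exactly the free parameter that rescales this bound (and appears correspondingly in the final estimate). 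Every other ingredient of Mendelson's argument --- in particular the refined chaining over $L$ that produces $\tilde{\Lambda}_{\tilde{C}u}$ --- depends only on the $x$-marginals (encoded through the pushforward measure and the $(q)$-norms $\norm{v^*}_{(\cdot)}$) and on a fixed, low-order moment of $\xi$.

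The first step is to fix any admissible exponent, say $q=3$, so that all of Mendelson's $q$-dependent constants become universal, and to observe via Proposition~\ref{prop_sg_se_properties}\ref{prop_sg_se_mom} that $\norm{\xi}_{L^q}\lesssim q\cdot\norm{\xi}_{\psi_1}\lesssim\norm{\xi}_{\psi_1}$ and likewise $\norm{\xi}_{(q)}\lesssim\norm{\xi}_{\psi_1}$; hence any bounded-order moment of $\xi$ used inside the chaining may be replaced by $\norm{\xi}_{\psi_1}$ up to a universal constant. The second step is to replace the Markov estimate for $\norm{(\xi_i)_{i=1}^n}_2$ by Corollary~\ref{cor_xi_conc}: with probability at least $1-2\exp(-C\sqrt n)$ we have $\norm{(\xi_i)_{i=1}^n}_2\lesssim\sqrt n\cdot\norm{\xi_1}_{\psi_1}$. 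In effect this fixes the parameter $w$ to a universal constant and trades the polynomial failure probability for an exponential-in-$\sqrt n$ one, which is precisely the sub-exponential counterpart of the Bernstein-based step used by \citet[Thm.~4.4]{men16} in the sub-Gaussian case --- with the Hanson--Wright-type bound of Theorem~\ref{thm_goetze} replacing Bernstein's inequality for $\sum_i\xi_i^2$.

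With these two substitutions in place, one re-runs the conditional chaining argument of \citet{men16} line by line: on the intersection of the event furnished by Corollary~\ref{cor_xi_conc} with the event (of probability at least $1-4\exp(-Cu^2)$, unchanged from Theorem~\ref{thm_men_general_bound}, as it concerns only the $x_i$ and the chaining) one obtains
$\sup_{v\in L}\bigl|\sum_{i=1}^n(\xi_i\cdot\scpr{x_i,v}-\E[\xi\cdot v^*])\bigr|\leq C'\cdot u\cdot\sqrt n\cdot\norm{\xi}_{\psi_1}\cdot\tilde{\Lambda}_{\tilde{C}u}(L,x)$ with universal constants $C',\tilde{C}$. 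A union bound over the two events then gives the asserted failure probability $2\exp(-C\sqrt n)+4\exp(-Cu^2)$, and the requirements $w,u>C_0$ of Theorem~\ref{thm_men_general_bound} are absorbed by imposing $u\geq 8$ and enlarging constants if necessary.

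The main obstacle is that this is not a formal corollary of Theorem~\ref{thm_men_general_bound}: one must look inside its proof far enough to certify that the multiplier vector $(\xi_i)_{i=1}^n$ enters only through its Euclidean norm and through fixed low-order moments of $\xi$, so that Corollary~\ref{cor_xi_conc} together with the sub-exponential moment comparisons can be slotted in without disturbing the chaining over $L$ or the functional $\tilde{\Lambda}$. This separation of the multiplier from the chaining is exactly what \citet{men16} already carries out when passing from Theorem~1.9 to his sub-Gaussian Theorem~4.4; once it is granted, the remaining modifications are bookkeeping, since $\tilde{\Lambda}_{\tilde{C}u}(L,x)$ is insensitive to the tail assumption on $\xi$.
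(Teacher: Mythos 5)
Your proposal takes essentially the same route as the paper's proof: open up the last step of Mendelson's Theorem~1.9/Theorem~4.4 argument, replace the Markov bound on $\norm{(\xi_i)_{i=1}^n}_2$ by Corollary~\ref{cor_xi_conc} (freezing $w$ to a universal constant and trading the polynomial failure probability for $2\exp(-C\sqrt n)$), and replace fixed $L^q$-moments of $\xi$ by $\norm{\xi}_{\psi_1}$ via Proposition~\ref{prop_sg_se_properties}\ref{prop_sg_se_mom}, noting that the $\tilde\Lambda$-chaining over $L$ depends only on the $x$-marginals. Two small mismatches with the paper's execution are worth flagging: the paper sets $q=6$ (not $3$), chosen so that Mendelson's derived constants come out as $r=r'=2$ and $q_1=8$, the latter being why the theorem requires $u\geq 8$; and the paper makes explicit a second place where the $\xi$-tail enters beyond $\norm{(\xi_i)_i}_2$, namely the ordered-statistics sum $\big(\sum_{i\geq j_0}(\xi^*_i)^{4}\big)^{1/4}$, bounded by $n^{1/4}\norm{\xi}_{\psi_1}$ on an event of probability $1-2\exp(-Cu^2)$ --- your ``fixed low-order moments'' clause covers the moment-comparison part of this but leaves the extra probabilistic event and its contribution to the failure probability implicit.
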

\begin{proof}
Analogously to the proof of \citep[Thm.~4.4]{men16}, it is enough to adapt the last step of the proof of \citep[Thm.~1.9]{men16}. To this end, we set the variables arising in the proof of \citep[Thm.~4.4]{men16} to the values $q \coloneqq 6$ and $w \coloneqq 1$, which entails $r=r'=2$ and $q_1= 8$. Then, with probability at least $1-2\exp(-C\cdot \sqrt{n})$, Corollary~\ref{cor_xi_conc} implies that
\begin{equation}
	\norm{(\xi_i)_{i=1}^n}_2 \lesssim \sqrt{n} \cdot \norm{\xi_1}_{\psi_1}.
\end{equation}
Since $\norm{\xi}_{L^6} \lesssim \norm{\xi}_{\psi_1}$, we also have that
\begin{equation}
	\bigg( \sum_{i \geq j_0}(\xi^*_i)^{2r} \bigg)^{\frac{1}{2r}} = \bigg( \sum_{i \geq j_0}(\xi^*_i)^{4} \bigg)^{\frac{1}{4}} \lesssim \norm{\xi}_{L^6} \cdot n^{1/4} \lesssim \norm{\xi}_{\psi_1} \cdot n^{1/4},
\end{equation}
with probability at least $1-2 \exp(-C \cdot u^2)$, where $\xi^*$ and $j_0$ are objects defined in the proof of \citep[Thm.~1.9]{men16}. The rest of the proof remains unchanged.
\end{proof}

The following lemma is a centerpiece of our statistical analysis, as it allows us to control the complexity term $\tilde{\Lambda}_u(L,x)$ via generic Bernstein concentration:
\begin{lemma} \label{lemma_lambda_tilde}
Let $L \subset \R^p$ with $0 \in \convh(L)$ and $u \geq 1$. Let $x$ be a random vector in $\R^p$ that exhibits generic Bernstein concentration with respect to $(\gennorm_g,\gennorm_e)$. Then, we have that
\begin{equation}
	\tilde{\Lambda}_u(L,x) \lesssim u \cdot \gamma_1(L,\gennorm_e) + \gamma_2(L,\gennorm_g).
\end{equation}
\end{lemma}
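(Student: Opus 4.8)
The plan is to run the two-metric generic chaining construction, using Lemma~\ref{lemma_bc_norm_bound} to convert $(q)$-norm increments of the marginals $v^*$ into $\gennorm_e$- and $\gennorm_g$-increments of $v$. First I would record the $(q)$-norm form of Lemma~\ref{lemma_bc_norm_bound}: for any $w \in \R^p$ and $r \geq 1$ we have $\norm{w^*}_{L^r} \lesssim r\,\norm{w}_e + \sqrt r\,\norm{w}_g$, so dividing by $\sqrt r$ and taking the supremum over $1 \leq r \leq q$ gives $\norm{w^*}_{(q)} \lesssim \sqrt q\,\norm{w}_e + \norm{w}_g$. Applied with $w = v - \pi_s v$ (using linearity, $v^* - (\pi_s v)^* = (v - \pi_s v)^*$) and $q = u^2 2^s \geq 1$, this reads $\norm{v^* - (\pi_s v)^*}_{(u^2 2^s)} \lesssim u\cdot 2^{s/2}\,\norm{v - \pi_s v}_e + \norm{v - \pi_s v}_g$, which after multiplying by the chaining weight $2^{s/2}$ produces a term $u\cdot 2^{s}\,\norm{v - \pi_s v}_e$ (matching $\gamma_1$) plus a term $2^{s/2}\,\norm{v - \pi_s v}_g$ (matching $\gamma_2$).

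Next I would build a suitable admissible approximation sequence by interlacing two partition sequences. Choose admissible partition sequences $(\mathcal A_s^{(1)})_{s\geq 0}$ and $(\mathcal A_s^{(2)})_{s\geq 0}$ of $L$ with $\sup_{v}\sum_{s}2^{s}\Delta_e(A_s^{(1)}(v)) \leq 2\gamma_1(L,\gennorm_e)$ and $\sup_{v}\sum_{s}2^{s/2}\Delta_g(A_s^{(2)}(v)) \leq 2\gamma_2(L,\gennorm_g)$. Put $\mathcal B_0 \coloneqq \{L\}$ and, for $s\geq 1$, let $\mathcal B_s$ be the common refinement of $\mathcal A_{s-1}^{(1)}$ and $\mathcal A_{s-1}^{(2)}$; then $|\mathcal B_s| \leq 2^{2^{s-1}}\cdot 2^{2^{s-1}} = 2^{2^s}$, so $(\mathcal B_s)$ is admissible, and for every $v$ the cell $B_s(v)$ is contained in $A_{s-1}^{(1)}(v)$ and in $A_{s-1}^{(2)}(v)$ simultaneously. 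Pick one representative point from each cell of each $\mathcal B_s$ and let $L_s$ be the finite set of these points, so $(L_s)_{s\geq 0}$ is an admissible approximation sequence with $|L_0|=1$; write $r_s(v)\in L_s$ for the representative of $B_s(v)$. Substituting this sequence into the definition of $\tilde\Lambda_u$ gives an upper bound, and since the $\pi_s v$ there are nearest points in $(L_s)^*$ we may estimate $\norm{v^* - (\pi_s v)^*}_{(u^2 2^s)} \leq \norm{(v - r_s(v))^*}_{(u^2 2^s)}$, after which $\norm{v - \pi_s v}_e$ and $\norm{v - \pi_s v}_g$ in the bound above are replaced by $\Delta_e(B_s(v))$ and $\Delta_g(B_s(v))$, respectively.

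Then I would assemble the estimate. For the chaining sum, the $s=0$ cell is all of $L$, so $\sum_{s\geq 0} u\cdot 2^s\Delta_e(B_s(v)) \leq u\Delta_e(L) + u\sum_{s\geq 1}2^s\Delta_e(A_{s-1}^{(1)}(v)) = u\Delta_e(L) + 2u\sum_{s\geq 0}2^{s}\Delta_e(A_s^{(1)}(v)) \lesssim u\,\gamma_1(L,\gennorm_e)$, where I also use the elementary bound $\Delta_e(L) \leq \gamma_1(L,\gennorm_e)$ coming from the $s=0$ term of the $\gamma$-functional; the analogous computation gives $\sum_{s\geq 0}2^{s/2}\Delta_g(B_s(v)) \lesssim \gamma_2(L,\gennorm_g)$. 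For the leading term, $\sup_{v}\norm{(\pi_0 v)^*}_{(u^2)} = \norm{r_0^*}_{(u^2)} \lesssim u\,\norm{r_0}_e + \norm{r_0}_g$ with $r_0 \in L$, and here I would invoke the hypothesis $0 \in \convh(L)$: writing $0 = \sum_j \lambda_j w_j$ with $w_j \in L$, $\lambda_j \geq 0$, $\sum_j\lambda_j = 1$, we get $\norm{r_0}_e = \norm[\big]{\sum_j \lambda_j(r_0 - w_j)}_e \leq \sum_j\lambda_j\Delta_e(L) = \Delta_e(L) \leq \gamma_1(L,\gennorm_e)$ and likewise $\norm{r_0}_g \leq \gamma_2(L,\gennorm_g)$. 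Adding the two contributions yields $\tilde\Lambda_u(L,x) \lesssim u\,\gamma_1(L,\gennorm_e) + \gamma_2(L,\gennorm_g)$, as claimed.

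I expect the main difficulty to be organizational rather than conceptual: one must check that the single interlaced sequence $(\mathcal B_s)$ simultaneously controls the $\gennorm_e$-diameters (with weight $2^s$, for $\gamma_1$) and the $\gennorm_g$-diameters (with weight $2^{s/2}$, for $\gamma_2$) along one and the same chain, and that the index shift in the common-refinement step costs only universal constants. The hypothesis $0 \in \convh(L)$ serves precisely to replace the origin appearing in $\norm{r_0^*}_{(u^2)}$ by a convex combination of points of $L$, thereby removing what would otherwise be an additive radius term (cf.~Remark~\ref{rmk_local}\ref{rmk_local_radius}).
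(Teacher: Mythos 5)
Your proof is correct and takes essentially the same approach as the paper: both use Lemma~\ref{lemma_bc_norm_bound} to convert $(q)$-norm increments into $\gennorm_e$- and $\gennorm_g$-increments, build a single admissible approximation sequence by a shifted common refinement of two near-optimal partition sequences, bound the chaining sum by cell diameters, and invoke $0 \in \convh(L)$ for the leading term. The only cosmetic difference is that the paper first splits $\tilde\Lambda_u \leq \Lambda_u + \sup_{v\in L}\norm{v^*}_{(u^2)}$ before constructing the sequence, while you substitute the constructed sequence into $\tilde\Lambda_u$ directly.
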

\begin{proof}
	From Lemma~\ref{lemma_bc_norm_bound}, we obtain
	\begin{equation} \label{lemma_lambda_tilde_norm_bound}
		\norm{v^*}_{(u^2 2^s)} = \sup_{1 \leq q \leq u^2 2^s} \frac{\norm{v^*}_{L^q}}{\sqrt{q}} \lesssim \sup_{1 \leq q \leq u^2 2^s} \frac{q \cdot \norm{v}_e + \sqrt{q} \cdot \norm{v}_g}{\sqrt{q}} = u 2^{s/2} \cdot \norm{v}_e + \norm{v}_g 
	\end{equation}
	for every $v \in \R^p$.
	Adopting the notation from \citep[Def.~1.7]{men16}, we set
	\begin{equation}
		\Lambda_u(L,x) \coloneqq \inf \ \sup_{v \in L} \sum_{s \geq 0} 2^{s/2}\cdot \norm{v^*-(\pi_s v)^*}_{(u^2 2^s)},
	\end{equation}
	which implies that
	\begin{equation} \label{lemma_lambda_tilde_lambda}
		\tilde{\Lambda}_u(L,x) \leq \Lambda_u(L,x) + \sup_{v \in L} \norm{v^*}_{(u^2)}.
	\end{equation}
	According to \eqref{lemma_lambda_tilde_norm_bound}, the second summand of \eqref{lemma_lambda_tilde_lambda} can bounded as follows:
	\begin{align}
		\sup_{v \in L} \norm{v^*}_{(u^2)} &\lesssim \sup_{v \in L} (u \cdot \norm{v}_e + \norm{v}_g) \leq \sup_{v \in L} (u \cdot \norm{v}_e) +\sup_{v \in L} \norm{v}_g\\*
		& \stackrel{\mathllap{0 \in \convh(L)}}{\leq} u \cdot \Delta_e(L) + \Delta_g(L) \leq u \cdot \gamma_1(L,\gennorm_e) + \gamma_2(L,\gennorm_g),
	\end{align}
	where $\Delta_e(L)$ and $\Delta_g(L)$ are the diameters of $L$ with respect to $\gennorm_e$ and $\gennorm_g$, respectively. To handle the first summand of \eqref{lemma_lambda_tilde_lambda}, we apply \eqref{lemma_lambda_tilde_norm_bound} once again:
	\begin{align}
		\Lambda_u(L,x) &=\inf \ \sup_{v \in L} \sum_{s \geq 0} 2^{s/2}\cdot \norm{v^*-(\pi_s v)^*}_{(u^2 2^s)} \\*
		&\leq \inf \ \sup_{v \in L} \sum_{s \geq 0} 2^{s/2}\cdot \norm{v^*-(\pi^{+}_s v)^*}_{(u^2 2^s)} \\*
		&\lesssim \inf \ \sup_{v \in L} \sum_{s \geq 0} \big(u 2^s \cdot \norm{v-\pi^{+}_s v}_e + 2^{s/2} \cdot \norm{v-\pi^{+}_s v}_g\big),
	\end{align}
	where $\pi^{+}_s : L \to L_s$ is an arbitrary map (depending on the respective admissible approximation sequence $(L_s)_{s \in \N}$ indexed by the infimum);
	note that we can indeed replace $\pi_s$ by $\pi^+_s$ here, since by definition, $(\pi_s)_{s \in \N}$ is an optimal (functional-minimizing) sequence of projections with respect to the $(u^2 2^s)$-norms.
	
	We now show that the above expression is upper bounded by 
	\begin{equation}
		5\cdot \Big(u\cdot \gamma_1(L,\gennorm_e) + \gamma_2(L,\gennorm_g)\Big),
	\end{equation}
	which would imply the claim of Lemma~\ref{lemma_lambda_tilde}. For this purpose, let $(\mathcal{E}_s)_{s \in \N}$ and $(\mathcal{G}_s)_{s \in \N}$ be two admissible partition sequences which approximate $\gamma_1(L,\gennorm_e)$ and $\gamma_2(L,\gennorm_g)$ up to a factor of~$2$, respectively. Furthermore, let $(\mathcal{F}_s)_{s \in \N}$ be given by $\mathcal{F}_0=\{L\}$ and
	\begin{equation} \label{tal_bern_bound_def_f}
		\mathcal{F}_s \coloneqq \{E \cap G \MID E \in \mathcal{E}_{s-1}, G \in \mathcal{G}_{s-1} \} \quad \text{ for } s \geq 1.
	\end{equation}
	It is not hard to see that $(\mathcal{F}_s)_{s \in \N}$ is indeed an admissible partition sequence.
	
	Next, we use the sequence $(\mathcal{F}_s)_{s \in \N}$ to construct an admissible approximation sequence $(L_s)_{s \in \N}$ and a corresponding sequence of maps $(\pi_s^\circ)_{s \in \N}$: For each $s \in \N$, the set $L_s \subset L$ is obtained by selecting exactly one (arbitrary) point $v_F$ from every $F \in \mathcal{F}_s$, while $\pi_s^{\circ}$ maps every point in $F$ to the respective $v_F$. This construction ensures that for $s \geq 1$ and $v \in L$, we have
	\begin{equation}
		\norm{v-\pi^{\circ}_sv}_e \leq \Delta_e(F_s(v)) \leq \Delta_e(E_{s-1}(v))
	\end{equation}
	and
	\begin{equation}
		\norm{v-\pi^{\circ}_sv}_g \leq \Delta_g(F_s(v)) \leq \Delta_g(G_{s-1}(v)).
	\end{equation}
	This implies
	\begin{align}
		\Lambda_u(L,x) &\lesssim \sup_{v \in L} \sum_{s \geq 0} \big(u 2^s \cdot \norm{v-\pi^{\circ}_s v}_e + 2^{s/2} \cdot \norm{v-\pi^{\circ}_s v}_g\big) \\*
		&\leq u \cdot \sup_{v \in L} \sum_{s \geq 0} 2^s \cdot \norm{v-\pi^{\circ}_s v}_e +\sup_{v \in L} \sum_{s \geq 0} 2^{s/2} \cdot \norm{v-\pi^{\circ}_s v}_g \\
		&\leq u \cdot \Big(\Delta_e(L)+ \sup_{v \in L} \sum_{s \geq 1} 2^{s} \cdot \Delta_e(E_{s-1}(v))\Big) + \Delta_g(L)+\sup_{v \in L} \sum_{s \geq 1} 2^{s/2} \cdot \Delta_g(G_{s-1}(v)) \\
		&= u \cdot \Big(\Delta_e(L)+ \sup_{v \in L} \sum_{s \geq 0} 2^{s+1} \cdot \Delta_e(E_{s}(v))\Big) + \Delta_g(L)+\sup_{v \in L} \sum_{s \geq 0} 2^{(s+1)/2} \cdot \Delta_g(G_{s}(v))  \\*
		&\leq u \cdot (1+4) \cdot \gamma_1(L,\gennorm_e)+ (1+2\sqrt{2})\cdot \gamma_2(L,\gennorm_g),
	\end{align}
	where we have used in the last line that $(\mathcal{E}_s)_{s \in \N}$ and $(\mathcal{G}_s)_{s \in \N}$ approximate $\gamma_1(L,\gennorm_e)$ and $\gamma_2(L,\gennorm_g)$ up to a factor of~$2$, respectively.
\end{proof}
\begin{remark}\label{rem_log_concave}
	The proof of Lemma~\ref{lemma_lambda_tilde} is inspired by \citep[Subsec.~4.3]{men16}, where the upper bound  $\tilde{\Lambda}_u(L,x) \lesssim u \cdot \gamma_1(L, \gennorm_\infty)+\gamma_2(L, \gennorm_2)$ is derived under the assumption that $x$ obeys an unconditional, isotropic, log-concave distribution. This assumption implies that $x$ is stochastically dominated by a random vector with i.i.d.\ standard exponential coordinates, which enables a bound for $\norm{v^*}_{(q)}$ in terms of $\norm{v}_2$ and $\norm{v}_\infty$ (cf.~Subsection~\ref{subsec:discussion:indep_se_features}).
\end{remark}

The estimate from Lemma~\ref{lemma_lambda_tilde} leads us to our final result for the multiplier process:
\begin{proposition} \label{prop_multiplier_bound}
	Let $L\subset t\Sphere^{p-1}$ for some $t > 0$ and let $(x, \xi) \in \R^p \times \R$ be a random pair such that $\norm{\xi}_{\psi_1} < \infty$ and $x$ exhibits generic Bernstein concentration with respect to $(\gennorm_g,\gennorm_e)$. We assume that $(x_1, \xi_1), \dots, (x_n, \xi_n)$ are independent copies of $(x, \xi)$.
	Then there exist universal constants $C, C' >0$ such that for every $u \geq 8$, the following holds true with probability at least $1-2\exp(-C\cdot \sqrt{n})-4\exp(-C\cdot u^2)$:
	\begin{equation}
		\sup_{v \in L} \bigg| \sum_{i=1}^n (\xi_i \cdot \scpr{x_i,v} - \E[\xi \cdot v^*]) \bigg| \leq C' \cdot t \cdot u^2 \cdot \sqrt{n} \cdot \norm{\xi}_{\psi_1} \cdot m_t^{(g,e)}(L).
	\end{equation}
\end{proposition}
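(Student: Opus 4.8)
The plan is to combine Theorem~\ref{thm_men_subexp_bound}, which bounds the multiplier process in terms of the chaining functional $\tilde{\Lambda}_u$, with Lemma~\ref{lemma_lambda_tilde}, which controls $\tilde{\Lambda}_u$ by the $\gamma$-functionals, after first passing from $L$ to a ``skeleton'' $S$ in exactly the way done for the quadratic process in the proof of Proposition~\ref{prop_quadratic_bound}. Concretely: since $L \subset t\Sphere^{p-1}$, the defining condition of the local m-complexity reads $\convh(S) \supset L \cup \{0\}$, so I would fix a single set $S \subset \R^p$ with this property and with $\gamma_1(S,\gennorm_e) + \gamma_2(S,\gennorm_g) \leq 2\,t\,m_t^{(g,e)}(L)$. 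Such an $S$ exists whenever $0 < m_t^{(g,e)}(L) < \infty$, and the extreme cases can be dealt with directly: if $m_t^{(g,e)}(L) = \infty$ there is nothing to prove, while $m_t^{(g,e)}(L) = 0$ forces $\norm{v}_e = \norm{v}_g = 0$ for all $v \in L$, hence $\scpr{x,v} = 0$ almost surely by Lemma~\ref{lemma_bc_norm_bound}, so the left-hand supremum vanishes identically.

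The reduction to $S$ works because, for every fixed realization of the $(x_i,\xi_i)$, the map $v \mapsto \sum_{i=1}^n\big(\xi_i\scpr{x_i,v} - \E[\xi\cdot v^*]\big)$ is linear in $v$ --- note that $\E[\xi\cdot v^*] = \scpr{\E[\xi x],v}$ --- so its absolute value is convex; expressing any $v \in L \subset \convh(S)$ as a convex combination of points of $S$ yields
\begin{equation*}
	\sup_{v \in L}\bigg|\sum_{i=1}^n\big(\xi_i\scpr{x_i,v} - \E[\xi\cdot v^*]\big)\bigg| \leq \sup_{w \in S}\bigg|\sum_{i=1}^n\big(\xi_i\scpr{x_i,w} - \E[\xi\cdot w^*]\big)\bigg|.
\end{equation*}
The condition $0 \in \convh(S)$ is precisely what makes Lemma~\ref{lemma_lambda_tilde} applicable to $S$ in the next step, which is the reason the origin is built into the definition of $m_t^{(g,e)}$ (cf.\ Remark~\ref{rmk_local}\ref{rmk_local_radius}).

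It then remains to bound the supremum over $S$. Applying Theorem~\ref{thm_men_subexp_bound} with the set $S$ gives, for every $u \geq 8$ and with probability at least $1 - 2\exp(-C\sqrt{n}) - 4\exp(-C u^2)$,
\begin{equation*}
	\sup_{w \in S}\bigg|\sum_{i=1}^n\big(\xi_i\scpr{x_i,w} - \E[\xi\cdot w^*]\big)\bigg| \leq C'\,u\,\sqrt{n}\,\norm{\xi}_{\psi_1}\,\tilde{\Lambda}_{\tilde{C}u}(S,x),
\end{equation*}
and Lemma~\ref{lemma_lambda_tilde} --- legitimate since $0 \in \convh(S)$, and, using monotonicity of $\tilde{\Lambda}_u$ in $u$, after enlarging $\tilde{C}$ so that $\tilde{C}u \geq 1$ --- yields $\tilde{\Lambda}_{\tilde{C}u}(S,x) \lesssim u\,\gamma_1(S,\gennorm_e) + \gamma_2(S,\gennorm_g) \leq u\big(\gamma_1(S,\gennorm_e) + \gamma_2(S,\gennorm_g)\big)$, where $u \geq 1$ absorbs the $\gamma_2$-term. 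Chaining these with the skeleton inequality and the near-optimality of $S$ gives the asserted bound $\lesssim u^2\sqrt{n}\,\norm{\xi}_{\psi_1}\,t\,m_t^{(g,e)}(L)$ after renaming the universal constant.

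I do not anticipate a genuine obstacle: given Theorem~\ref{thm_men_subexp_bound} and Lemma~\ref{lemma_lambda_tilde}, the argument is essentially bookkeeping that mirrors the structure of the quadratic-process analysis. The two points requiring a little care are that the high-probability event delivered by Theorem~\ref{thm_men_subexp_bound} depends on the set it is applied to, so one must commit to a single near-optimal skeleton $S$ before invoking the theorem rather than trying to take an infimum over skeletons inside the probability statement, and that the degenerate cases $m_t^{(g,e)}(L) \in \{0,\infty\}$ must be disposed of separately as indicated above.
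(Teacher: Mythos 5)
Your proposal is correct and follows essentially the same route as the paper's proof: pick a single near-optimal skeleton $\tilde S$ realizing $m_t^{(g,e)}(L)$ up to a factor of $2$, pass from $L$ to $\tilde S$ via convexity of $w \mapsto \bigl|\bigl\langle \sum_i(\xi_i x_i - \E[\xi x]), w\bigr\rangle\bigr|$ and Jensen, apply Theorem~\ref{thm_men_subexp_bound} on $\tilde S$, and then control $\tilde\Lambda_{\tilde C u}(\tilde S, x)$ by Lemma~\ref{lemma_lambda_tilde} using $0\in\convh(\tilde S)$ and $u \ge 8 > 1$ to absorb the $\gamma_2$-term. Your explicit treatment of the degenerate cases $m_t^{(g,e)}(L)\in\{0,\infty\}$ and your remark that one must fix a single skeleton before invoking the high-probability bound are correct technical observations that the paper passes over silently but do not alter the argument.
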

\begin{proof}
According to the definition of the local m-complexity, there exists a set $\tilde{S}\subset \R^p$ with
\begin{equation}
	\convh(\tilde{S}) \supset (L \cap t \Sphere^{p-1}) \cup \{0\} \ (= L \cup \{0\})
\end{equation}
that satisfies
\begin{equation}\label{prop_multiplier_bound_2_approx_property}
\gamma_1(\tilde{S}, \gennorm_e)+\gamma_2(\tilde{S}, \gennorm_g) \leq 2t \cdot m_t^{(g,e)}(L).
\end{equation}
By Theorem~\ref{thm_men_subexp_bound}, with probability at least $1-2\exp(-C\cdot \sqrt{n})-4\exp(-C\cdot u^2)$, we have that
\begin{equation}\label{prop_multiplier_bound_upper_bound}
\sup_{v \in \tilde{S}} \bigg| \sum_{i=1}^n (\xi_i \cdot \scpr{x_i,v} - \E[\xi \cdot v^*]) \bigg| \lesssim u \cdot \sqrt{n} \cdot \norm{\xi}_{\psi_1} \cdot \tilde{\Lambda}_{\tilde{C} u}(\tilde{S},x).
\end{equation}
Now let $v\in L$. Since $L \subset \convh(\tilde{S})$, the point $v$ can be expressed as a convex combination of points $s_1,\dots,s_M \in \tilde{S}$ as in \eqref{v_convex_combination}. Conditioning on the random variables $x_i$ and $\xi_i$, the function
\begin{equation}
	h:\R^p \to \R, \quad w \mapsto \bigg| \sum_{i=1}^n (\xi_i \cdot \scpr{x_i,w} - \E[\xi \cdot w^*])\bigg|=\bigg|\SCPR[\Big]{\sum_{i=1}^n(\xi_i x_i - \E[\xi x]),w}\bigg|
\end{equation}
is a composition of a linear function and the convex function $z \mapsto |z|$. Hence, $h$ is convex and we can apply Jensen's inequality to obtain
\begin{equation}
	h(v) = h\bigg(\sum_{j=1}^M \lambda_j s_j\bigg)\leq\sum_{j=1}^M \lambda_j h(s_j)\leq\sum_{j=1}^M \bigg(\lambda_j \cdot \sup_{w \in \tilde{S}}h(w)\bigg)=\sup_{w \in \tilde{S}}h(w).
\end{equation}
Since $v\in L$ was arbitrarily chosen, we can conclude that the following bound holds true if the event from \eqref{prop_multiplier_bound_upper_bound} has occurred:
\begin{equation}
	\sup_{v \in L} \bigg| \sum_{i=1}^n (\xi_i \cdot \scpr{x_i,v} - \E[\xi \cdot v^*]) \bigg| \lesssim u \cdot \sqrt{n} \cdot \norm{\xi}_{\psi_1} \cdot \tilde{\Lambda}_{\tilde{C} u}(\tilde{S},x).
\end{equation}
Finally, Lemma~\ref{lemma_lambda_tilde} implies
\begin{equation}
	\tilde{\Lambda}_{\tilde{C} u}(\tilde{S},x) \lesssim \tilde{C} \cdot u\cdot \Big(\gamma_1(\tilde{S},\gennorm_e)+ \gamma_2(\tilde{S},\gennorm_g)\Big) \stackrel{\eqref{prop_multiplier_bound_2_approx_property}}{\lesssim} u\cdot t \cdot m_t^{(g,e)}(L),
\end{equation}
where we have also used that $u \geq 8 > 1$.
\end{proof}

\subsubsection{Controlling the Excess Risk}
\label{subsec:proofs:errorbounds}

With the results of Proposition~\ref{prop_quadratic_bound} and~\ref{prop_multiplier_bound} at hand, we are now ready to prove Theorem~\ref{thm_eg_local}.
Let us first consider the case $t>0$. According to Fact~\ref{fact_excess}, it suffices to show that $\mathcal{E}(\beta,\beta^\natural)>0$ for all $\beta \in K_{\beta^\natural, t}$.
Remarkably, this argument is actually the only point in our proof where we rely on the convexity of the hypothesis set $K$.
The remainder of the proof is divided into several substeps.

\paragraph{Step 1 (quadratic process):} 
Applying Proposition~\ref{prop_quadratic_bound} to $L \coloneqq K_{\beta^\natural,t} - \beta^\natural = (K - \beta^\natural) \cap t\Sphere^{p-1}$, the following holds with probability at least $1-\exp(-u^2 / 2)$: For every $\beta \in K_{\beta^\natural,t}$, we have that
\begin{align}
	\sqrt{\mathcal{Q}(\beta - \beta^\natural)}&= \bigg(\tfrac{1}{n} \sum_{i=1}^n \scpr{x_i, \beta-\beta^\natural}^2\bigg)^{\frac{1}{2}} \\*
	&\geq t \cdot \bigg(\tau \cdot Q_{2\tau}(K^\Delta,x) -\frac{C_Q\cdot q_{t,n}^{(g,e)}(K - \beta^\natural)+\tau \cdot u}{\sqrt{n}}\bigg) \\*
	&\geq \tfrac{1}{2} \cdot t \cdot \tau \cdot Q_{2\tau}(K^\Delta,x),
\end{align}
where the last step follows from the condition \eqref{thm_eg_local_condition_n} for an appropriate hidden constant.

\paragraph{Step 2 (multiplier process):} 
Applying Proposition~\ref{prop_multiplier_bound} to $L \coloneqq K_{\beta^\natural,t} - \beta^\natural = (K - \beta^\natural) \cap t\Sphere^{p-1}$ and $\xi \coloneqq \scpr{x,\beta^\natural}-y$, the following holds with probability at least $1-2\exp(-C\cdot \sqrt{n})-4\exp(-C\cdot u^2)$: For every $\beta \in K_{\beta^\natural,t}$, we have that
\begin{align}
	\tfrac{1}{2}\cdot \mathcal{M}(\beta, \beta^\natural) &= \tfrac{1}{n} \sum_{i=1}^n(\scpr{x_i,\beta^\natural}-y_i) \scpr{x_i, \beta-\beta^\natural} \\*
	&\geq \E\big[(\scpr{x,\beta^\natural}-y)\scpr{x, \beta-\beta^\natural}\big] -C' \cdot u^2 \cdot \norm{\scpr{x,\beta^\natural}-y}_{\psi_1} \cdot \frac{t \cdot m_t^{(g,e)}(K - \beta^\natural)}{\sqrt{n}} \\
	&= -t \cdot \bigg( \E\Big[(y - \scpr{x,\beta^\natural})\SCPR[\Big]{x, \tfrac{\beta - \beta^\natural}{t}}\Big]+C' \cdot u^2 \cdot \norm{\scpr{x,\beta^\natural}-y}_{\psi_1} \cdot \frac{m_t^{(g,e)}(K - \beta^\natural)}{\sqrt{n}} \bigg)\\
	&\geq -t \cdot \bigg(\rho_t(\beta^\natural) + C' \cdot u^2 \cdot \sigma(\beta^\natural)\cdot \frac{m_t^{(g,e)}(K - \beta^\natural)}{\sqrt{n}}\bigg)\\*
	&\geq -t \cdot \max\{1,C'\}\cdot \bigg(\rho_t(\beta^\natural) + u^2 \cdot \sigma(\beta^\natural)\cdot \frac{m_t^{(g,e)}(K - \beta^\natural)}{\sqrt{n}}\bigg),
\end{align}
where the second inequality is due to
\begin{equation}
	\E\Big[(y - \scpr{x,\beta^\natural})\SCPR[\Big]{x, \tfrac{\beta - \beta^\natural}{t}}\Big] = \SCPR[\Big]{\E\big[(y - \scpr{x,\beta^\natural}) x\big], \underbrace{\tfrac{\beta - \beta^\natural}{t}}_{\mathclap{\in \tfrac{1}{t} (K - \beta^\natural) \cap \Sphere^{p-1} =  K^t}}} \leq \rho_t(\beta^\natural).
\end{equation}
If the aforementioned event has occurred and if $t$ satisfies the condition \eqref{thm_eg_local_condition_t} for an appropriate hidden constant, it follows that
\begin{equation}
	\mathcal{M}(\beta, \beta^\natural) \geq -\frac{t^2}{8} \cdot \big(\tau \cdot Q_{2\tau}(K^\Delta,x)\big)^2 \quad \text{for all $\beta \in K_{\beta^\natural,t}$.}
\end{equation}

\paragraph{Step 3 (excess risk):} Finally, we assume that the events from Step~1 and Step~2 have occurred jointly, which indeed happens with probability at least $1 - 2\exp(-C\cdot \sqrt{n})-5\exp(-C\cdot u^2)$ for an appropriately chosen constant $C > 0$.
Then, we obtain
\begin{align}
	\mathcal{E}(\beta, \beta^\natural)&= \mathcal{Q}(\beta - \beta^\natural)+\mathcal{M}(\beta, \beta^\natural) \\*
	&\geq  \frac{t^2}{4}  \cdot \big(\tau \cdot Q_{2\tau}(K^\Delta,x)\big)^2 - \frac{t^2}{8} \cdot \big(\tau \cdot Q_{2\tau}(K^\Delta,x)\big)^2 > 0
\end{align}
for all $\beta \in K_{\beta^\natural,t}$, which concludes the proof for $t>0$. It remains to consider the case $t=0$.

\paragraph{Step 4 ($t = 0$):} In this case, $q_{t,n}^{(g,e)}(K - \beta^\natural)$ and $m_t^{(g,e)}(K-\beta^\natural)$ correspond to the conic complexities from Definition~\ref{def_conic_q_m_complexity}. 

Applying Proposition~\ref{prop_quadratic_bound} and Proposition~\ref{prop_multiplier_bound} simultaneously (as in the preceding steps) to $L \coloneqq \cone{K-\beta^\natural}\cap \Sphere^{p-1}$ and radius $\tilde{t} \coloneqq 1$, we have that with probability at least $1 - 2\exp(-C\cdot \sqrt{n})-5\exp(-C\cdot u^2)$, both
\begin{align} 
\inf_{v \in L}\mathcal{Q}(v)\stackrel{\eqref{thm_eg_local_condition_n}}{\geq}\frac{1}{4} \cdot \big(\tau \cdot Q_{2\tau}(K^\Delta,x)\big)^2 > 0 \label{thm_eg_t_eq_0}
\end{align}
and
\begin{align}
\inf_{v \in L}\mathcal{M}(v+\beta^\natural, \beta^\natural) \geq -1 \cdot\max\{1,C'\}\cdot \bigg(\rho_0(\beta^\natural) + u^2 \cdot \sigma(\beta^\natural)\cdot \frac{m_0^{(g,e)}(K - \beta^\natural)}{\sqrt{n}}\bigg)\stackrel{\eqref{thm_eg_local_condition_t}}{\geq} 0,
\end{align}
where we have also used that $q_{1,n}^{(g,e)}(L) = q_{0,n}^{(g,e)}(K-\beta^\natural)$ and $m_{1}^{(g,e)}(L) = m_{0}^{(g,e)}(K-\beta^\natural)$.
Finally, let us assume that this event has occurred and let $\beta \in K \setminus \{\beta^\natural\}$. Then, we have
\begin{equation}
\mathcal{E}(\beta,\beta^\natural)=\norm{\beta-\beta^\natural}^2_2 \cdot \mathcal{Q}\Big(\underbrace{\tfrac{\beta-\beta^\natural}{\norm{\beta-\beta^\natural}_2}}_{\in L}\Big)+\norm{\beta-\beta^\natural}_2 \cdot \mathcal{M}\Big(\underbrace{\tfrac{\beta-\beta^\natural}{\norm{\beta-\beta^\natural}_2}}_{\in L}{}+{}\beta^\natural,\beta^\natural\Big)>0,
\end{equation}
which implies that $\beta^\natural$ is the only solution to \eqref{LS_K}. \qed

\subsection{Proof of Corollary~\ref{cor_eg_conic}}
\label{subsec:proofs:cor_eg_conic}

For $t>0$ and $L \subset \R^p$, we have that $\tfrac{1}{t} L \subset \cone{L}$. Due to the homogeneity of the semi-norms $\gennorm_g$ and $\gennorm_e$, we can rewrite the definition of $q_{t,n}^{(g,e)}(L)$ as
\begin{equation}
	q_{t,n}^{(g,e)}(L)=\inf\Big\{\tfrac{\gamma_1(S, \gennorm_e)}{\sqrt{n}}+\gamma_2(S, \gennorm_g+\gennorm_e)\MID[\big] S \subset \R^p, \convh(S) \supset \tfrac{1}{t} L \cap \Sphere^{p-1} \Big\},
\end{equation}
which coincides with the definition of $q_{0,n}^{(g,e)}(L)$ except that the infimum is taken over an inclusion-wise larger domain of sets. Therefore, it holds that $q_{t,n}^{(g,e)}(L) \leq q_{0,n}^{(g,e)}(L)$, and analogously, we have $m_t^{(g,e)}(L) \leq m_0^{(g,e)}(L)$. It follows that the replacement of the local complexities by the conic complexities (and $\rho_t(\beta^\natural)$ by $\rho_0(\beta^\natural)$) leads to stronger conditions in \eqref{thm_eg_local_condition_n} and \eqref{thm_eg_local_condition_t}, which obviously cannot harm the validity of the theorem. \qed

\subsection{Proof of Lemma~\ref{lemma_q_and_m_global}}
\label{subsec:proofs:lemma_q_and_m_global}

The claim of~\ref{lemma_q_and_m_global_1} follows from the fact that the infima in the global complexity parameters are taken over inclusion-wise smaller domains of sets. The claim of~\ref{lemma_q_and_m_global_2} follows from the fact that the affine term $v$ does not affect the pseudo-metrics induced by the semi-norms $\gennorm_g$ and $\gennorm_e$.

For the claim of~\ref{lemma_q_and_m_global_3}, we first observe that
\begin{equation}\label{gamma_functional_split}
	\gamma_2(L, \gennorm_g+\gennorm_e) \lesssim \gamma_2(L, \gennorm_g)+\gamma_2(L, \gennorm_e),
\end{equation}
which is stated as an exercise by \citet[Exc.~2.2.24]{tal14}; its proof is based on the same strategy as the proof of Lemma~\ref{lemma_lambda_tilde}: based on two admissible partition sequences that approximate $\gamma_2(L, \gennorm_g)$ and $\gamma_2(L, \gennorm_e)$ up to a factor of $2$, a third partition sequence is defined as in \eqref{tal_bern_bound_def_f}. 
Making use of \eqref{gamma_functional_split}, we obtain
\begin{align}
	q_n^{(g,e)}(L) &\lesssim \inf_S\Big\{\gamma_1(S, \gennorm_e)+ \gamma_2(S, \gennorm_g)+\gamma_2(S, \gennorm_e)\Big\} \\* &\lesssim \inf_S\Big\{\gamma_1(S, \gennorm_e)+ \gamma_2(S, \gennorm_g)\Big\} \\*
	&= m^{(g,e)}(L),
\end{align} 
where the second inequality is due to the fact that $\gamma_2(L,d) \leq \gamma_1(L,d)$ holds true for all sets~$L$ and pseudo-metrics $d$.

The claims of~\ref{lemma_q_and_m_global_4} and~\ref{lemma_q_and_m_global_5} follow directly from the respective definitions. \qed

\subsection{Proof of Corollary~\ref{cor_eg_global}}
\label{subsec:proofs:cor_eg_global}

We apply Theorem~\ref{thm_eg_local} for the precision level
\begin{equation}
	t \coloneqq \tilde{C}\cdot \max\Big\{1,\big(\tau\cdot Q_{2\tau}(K^\Delta,x)\big)^{-2}\Big\} \cdot \pospart[\bigg]{\rho_0(\beta^\natural) + \max\{1,u^2 \cdot \sigma(\beta^\natural)\}\cdot \frac{\sqrt{m^{(g,e)}(K)}}{n^{1/4}}},
\end{equation}
where the universal constant $\tilde{C} \geq 1$ will be specified later.

For this specific choice of $t$, Lemma~\ref{lemma_q_and_m_global}\ref{lemma_q_and_m_global_1}--\ref{lemma_q_and_m_global_3} imply that
\begin{align}
	q^{(g,e)}_{t,n}(K-\beta^\natural) &\leq \frac{1}{t} \cdot q_n^{(g,e)}(K) \\*
	&\leq \tilde{C}\cdot\min\Big\{1,\big(\tau\cdot Q_{2\tau}(K^\Delta,x)\big)^2\Big\}\cdot\frac{n^{1/4}}{\sqrt{m^{(g,e)}(K)}} \cdot q_n^{(g,e)}(K)\\&\lesssim \sqrt{\tau\cdot Q_{2\tau}(K^\Delta,x)} \cdot\frac{n^{1/4}}{\sqrt{q_n^{(g,e)}(K)}}\cdot q_n^{(g,e)}(K)\\*
	&= \sqrt{\tau\cdot Q_{2\tau}(K^\Delta,x)} \cdot n^{1/4} \cdot \sqrt{q_n^{(g,e)}(K)}, \label{cor_eg_global_q-global-local}
\end{align}
where we have used that
\begin{equation}
	\min\Big\{1,\big(\tau\cdot Q_{2\tau}(K^\Delta,x)\big)^2\Big\} \leq  \sqrt{\tau\cdot Q_{2\tau}(K^\Delta,x)}.
\end{equation}
This implies
\begin{align}
	q^{(g,e)}_{t,n}(K-\beta^\natural)+\tau \cdot u &\lesssim \sqrt{\tau\cdot Q_{2\tau}(K^\Delta,x)}  \cdot n^{1/4} \cdot\sqrt{q_n^{(g,e)}(K)}+ \tau \cdot u \\*
	&\stackrel{\mathllap{\eqref{cor_eg_global_condition_n}}}{\lesssim} \sqrt{\tau\cdot Q_{2\tau}(K^\Delta,x)} \cdot n^{1/4} \cdot \sqrt{\tau\cdot Q_{2\tau}(K^\Delta,x)} \cdot n^{1/4}+\tau\cdot Q_{2\tau}(K^\Delta,x)\cdot \sqrt{n} \\*
	&\leq 2 \cdot \sqrt{n}\cdot \tau\cdot Q_{2\tau}(K^\Delta,x),
\end{align}
and if the hidden constant in \eqref{cor_eg_global_condition_n} is appropriately chosen, then the condition \eqref{thm_eg_local_condition_n} is indeed fulfilled.

Analogously to the estimates in \eqref{cor_eg_global_q-global-local}, we obtain from Lemma~\ref{lemma_q_and_m_global} that (note that $0 \in K-\beta^\natural$)
\begin{equation}
	m^{(g,e)}_t(K-\beta^\natural)\leq \sqrt{\tau\cdot Q_{2\tau}(K^\Delta,x)} \cdot n^{1/4} \cdot \sqrt{m^{(g,e)}(K)}.
\end{equation}
This implies
\begin{align}
&\frac{1}{(\tau\cdot Q_{2\tau}(K^\Delta,x))^2}\cdot\pospart[\bigg]{\rho_t(\beta^\natural) + u^2 \cdot \sigma(\beta^\natural)\cdot \frac{m_t^{(g,e)}(K-\beta^\natural)}{\sqrt{n}}} \\*
\leq {} & \max\Big\{1,\big(\tau\cdot Q_{2\tau}(K^\Delta,x)\big)^{-2}\Big\} \cdot \pospart[\bigg]{\rho_0(\beta^\natural) + u^2 \cdot \sigma(\beta^\natural)\cdot \frac{\sqrt{m^{(g,e)}(K)}}{n^{1/4}}} \\*
\leq {} & \tilde{C}^{-1} \cdot t,
\end{align}
where we have used the definition of $t$ and the fact that
\begin{equation}
	\frac{\sqrt{\tau\cdot Q_{2\tau}(K^\Delta,x)}}{\big(\tau\cdot Q_{2\tau}(K^\Delta,x)\big)^2}= \big(\tau\cdot Q_{2\tau}(K^\Delta,x)\big)^{-\frac{3}{2}} \leq \max\Big\{1,\big(\tau\cdot Q_{2\tau}(K^\Delta,x)\big)^{-2}\Big\}.
\end{equation}
Finally, if the constant $\tilde{C}$ is chosen sufficiently large, then the condition \eqref{thm_eg_local_condition_t} is fulfilled and Theorem~\ref{thm_eg_local} yields the claim. \qed

\subsection{Proof of Proposition~\ref{prop_intro}}
\label{subsec:proofs:prop_intro}

Proposition~\ref{prop_intro} is a direct consequence of Corollary~\ref{cor_eg_global} with the following specifications. Since $\beta^\natural \coloneqq \beta^*$, we have $\rho_0(\beta^*) \leq 0$ according to Appendix~\ref{subsec:app:rem_on_mc}. The global complexity terms can be bounded according to Proposition~\ref{prop_polytopes}. Finally, we make use of the bound \eqref{eq_paley_zygmund} from Appendix~\ref{subsec:app:rem_main}\ref{rmk_local_smallball}. Let $\alpha$ and $\delta$ be as in \eqref{paley_alpha_delta} and set $\tau \coloneqq \alpha / 4$. Since $x$ is isotropic, we have $\delta=1$. To see that $\alpha \gtrsim \kappa^{-3}$, observe that
\begin{equation}
    1 = \E[\scpr{x,v}^2]^2 \leq \E[|\scpr{x,v}|] \cdot \E[|\scpr{x,v}|^3] \leq \E[|\scpr{x,v}|] \cdot 3^3 \cdot \kappa^3 \quad \text{for all } v\in \Sphere^{p-1},
\end{equation}
where we have used the isotropy of $x$, the Cauchy-Schwarz inequality, and Proposition~\ref{prop_sg_se_properties}\ref{prop_sg_se_mom}. Now, \eqref{eq_paley_zygmund} implies that $\tau \cdot Q_{2\tau}(K^\Delta,x) \gtrsim \kappa^{-9}$. Since $\tau=\alpha / 4$, \eqref{eq_paley_zygmund} is equivalent to $Q_{2\tau}(K^\Delta,x) \geq \alpha^2 / (4\delta)$, which implies that $Q_{2\tau}(K^\Delta,x) \gtrsim \kappa^{-6}$. Finally, note that due to the isotropy of $x$, we have that $\kappa \gtrsim 1$. This concludes the proof.
\qed

\section*{Acknowledgments}
{\smaller
	M.G.\ acknowledges support by the German Federal Ministry of Education and Research (BMBF) through the Berlin Center for Machine Learning (BZML), Project~AP4.
	C.K.\ is supported by the Berlin Mathematics Research Center MATH+.
	
}

\renewcommand*{\bibfont}{\smaller}
 \bibliographystyle{abbrvnat}
 \bibliography{references.bib}

\appendix
\begin{appendices}
	
\section{Basic Facts on Sub-Gaussian and Sub-Exponential Random Variables}
\label{sec:app:facts}

The following proposition provides two characterizations of sub-exponential and sub-Gaussian random variables. The first one concerns the exponential decay behavior of their tails, while the second one addresses the growth of their absolute moments. Note that the dependence of the constants on $\alpha$ can be dropped here, since we consider only two values of $\alpha$.
\begin{proposition}[{\citep[Prop.~2.5.2, Prop.~2.7.1]{ver18}}] \label{prop_sg_se_properties} 
	For $\alpha \in \{1,2\}$, let $Z\in L_{\psi_\alpha}$. Then the following holds true:
	\begin{thmproperties}
		\item \label{prop_sg_se_ci} $Z$ satisfies the concentration inequality 
		\begin{equation}
			\Prob(|Z| \geq t) \leq 2 \exp\Big(-\Big(\tfrac{t}{C_\alpha\cdot\norm{Z}_{\psi_\alpha}}\Big)^\alpha\Big)\quad \text {for all $t \geq 0$,}
		\end{equation}
		where $C_\alpha > 0$ is a constant depending on $\alpha$.
		\item \label{prop_sg_se_mom} The moments of $Z$ satisfy 
		\begin{equation}
			\norm{Z}_{L^q} \leq C_\alpha\cdot\norm{Z}_{\psi_\alpha} \cdot q^{1/\alpha}\quad \text {for all $q \geq 1$,}
		\end{equation}
		where $C_\alpha > 0$ is a constant depending on $\alpha$.
	\end{thmproperties}
\end{proposition}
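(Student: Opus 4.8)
The statement is classical and is quoted here verbatim from \citet[Prop.~2.5.2 and Prop.~2.7.1]{ver18}; accordingly, the plan is simply to record the standard argument, which is elementary and treats $\alpha \in \{1,2\}$ uniformly. Throughout, write $K \coloneqq \norm{Z}_{\psi_\alpha}$, so that $\E\big[\exp(|Z|^\alpha / K^\alpha)\big] \leq 2$ (the infimum in Definition~\ref{def_sg_se} is attained by monotone convergence; alternatively one replaces $K$ by $K+\eps$ and lets $\eps \to 0$ at the end).

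For part~\ref{prop_sg_se_ci}, I would apply Markov's inequality to the nonnegative random variable $\exp(|Z|^\alpha / K^\alpha)$: for $t \geq 0$,
\begin{equation*}
	\Prob(|Z| \geq t) = \Prob\Big(\exp\big(\tfrac{|Z|^\alpha}{K^\alpha}\big) \geq \exp\big(\tfrac{t^\alpha}{K^\alpha}\big)\Big) \leq \frac{\E\big[\exp(|Z|^\alpha / K^\alpha)\big]}{\exp(t^\alpha / K^\alpha)} \leq 2\exp\Big(-\tfrac{t^\alpha}{K^\alpha}\Big),
\end{equation*}
which is the claimed bound, in fact with $C_\alpha = 1$. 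For part~\ref{prop_sg_se_mom}, I would feed this tail estimate into the layer-cake formula: for $q \geq 1$,
\begin{equation*}
	\E[|Z|^q] = \int_0^\infty q t^{q-1} \Prob(|Z| \geq t)\,dt \leq 2q \int_0^\infty t^{q-1} \exp\Big(-\tfrac{t^\alpha}{K^\alpha}\Big)\,dt.
\end{equation*}
The substitution $s = (t/K)^\alpha$ turns the right-hand integral into a Gamma integral, giving $\E[|Z|^q] = \tfrac{2q}{\alpha} K^q\, \Gamma(q/\alpha)$. Taking $q$-th roots and using the elementary bound $\Gamma(x) \leq x^x$ for $x \geq 1$ (and $\Gamma(x) \leq 1$ for $0 < x \leq 1$, which covers the single edge case $q=1$, $\alpha = 2$), together with the boundedness of $(2q/\alpha)^{1/q}$ over $q \geq 1$, yields $\norm{Z}_{L^q} = (\E[|Z|^q])^{1/q} \leq C_\alpha \cdot K \cdot q^{1/\alpha}$ for an absolute constant $C_\alpha$.

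There is no genuine obstacle here: the whole argument is Markov's inequality plus a change of variables. The only point needing mild care is the passage from the Gamma value to the clean power $q^{1/\alpha}$, i.e. the estimate $\big(\tfrac{2q}{\alpha}\Gamma(q/\alpha)\big)^{1/q} \lesssim q^{1/\alpha}$, where one should separate the small-$q$ regime (handled by crude bounds) from the large-$q$ regime (handled by $\Gamma(x) \leq x^x$ or Stirling). Since only $\alpha \in \{1,2\}$ occur, all constants are absolute and no $\alpha$-dependence needs to be tracked.
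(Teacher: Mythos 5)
The paper does not prove this proposition at all; it states it as a known fact and cites Proposition~2.5.2 and Proposition~2.7.1 of Vershynin's textbook. Your proof reconstructs precisely the standard argument from that source (Markov applied to $\exp(|Z|^\alpha/K^\alpha)$, then layer-cake plus a Gamma-function bound), so the overall approach is the expected one and the structure is sound.

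One factual slip in the small-$q$ bookkeeping: you write ``$\Gamma(x) \leq 1$ for $0 < x \leq 1$,'' but this is false. In fact $\Gamma$ is decreasing on $(0,1]$ with $\Gamma(x) \to \infty$ as $x \to 0^+$ and $\Gamma(1/2) = \sqrt{\pi} > 1$; the inequality $\Gamma(x) \leq 1$ only holds on a sub-interval of $[1,2]$ around its minimum. What you actually need for the edge regime $q \in [1,2)$, $\alpha = 2$ (so $q/\alpha \in [1/2,1)$) is merely that $\Gamma$ is \emph{bounded} there, which is true since $\Gamma$ is continuous and decreasing on that interval with $\Gamma(q/\alpha) \leq \Gamma(1/2) = \sqrt{\pi}$. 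That constant is then absorbed into $C_\alpha$, and since $q^{1/\alpha} \geq 1$ the final estimate $\norm{Z}_{L^q} \leq C_\alpha \norm{Z}_{\psi_\alpha} q^{1/\alpha}$ still follows. So the conclusion stands, but the specific claim should be corrected to a boundedness statement rather than the false inequality $\Gamma \leq 1$ on $(0,1]$.
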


The following two results are well-known inequalities for sub-Gaussian and sub-expo\-nen\-tial random vectors, respectively.
A comparison of both shows that the (weighted) sum of independent sub-exponential variables exhibits a mixed-tail behavior, as if it \emph{``were a mixture of sub-gaussian and sub-exponential distributions''}; quote from \citep[p.~35]{ver18}.
\begin{theorem}[\protect{Hoeffdings's inequality; \citep[Thm.~2.6.3]{ver18}}]\label{thm_hoeff_ineq}
	Let $x = (x_1, \dots, x_p)\in \R^p$ be a centered random vector with independent, sub-Gaussian coordinates and $R \coloneqq \max_{1 \leq j \leq p} \norm{x_j}_{\psi_2}$. Then, for every $v \in \R^p$ and $t \geq 0$, we have that
	\begin{equation} \label{thm_hoeff_ineq_bound}
		\Prob (|\scpr{x,v}| \geq t ) \leq 2 \exp\Big(- C_H \cdot \tfrac{t^2}{R^2\norm{v}_2^2}\Big),
	\end{equation}
	where $C_H > 0$ is a universal constant.
\end{theorem}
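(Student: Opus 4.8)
The plan is to reduce Theorem~\ref{thm_hoeff_ineq} to the one-dimensional concentration inequality of Proposition~\ref{prop_sg_se_properties}\ref{prop_sg_se_ci}. The point is that $\scpr{x,v}$ is itself a scalar random variable, so it suffices to show that it is sub-Gaussian with $\norm{\scpr{x,v}}_{\psi_2} \lesssim R \cdot \norm{v}_2$; feeding this into Proposition~\ref{prop_sg_se_properties}\ref{prop_sg_se_ci} then immediately produces a tail bound of the form $\Prob(|\scpr{x,v}| \geq t) \leq 2\exp(-C_H t^2/(R^2\norm{v}_2^2))$, which is exactly \eqref{thm_hoeff_ineq_bound}. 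The case $v = 0$ is trivial, and since both sides scale correctly under $v \mapsto v/\norm{v}_2$, $t \mapsto t/\norm{v}_2$, it is enough to treat the case $\norm{v}_2 = 1$.

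First I would establish the relevant stability property: a sum of independent, centered, sub-Gaussian random variables is again sub-Gaussian, with the squares of the Orlicz norms adding up (up to a universal constant). The cleanest route is via moment generating functions. Using $\E[x_j] = 0$ together with the moment growth bound $\norm{x_j}_{L^q} \leq C\,\norm{x_j}_{\psi_2}\sqrt{q}$ from Proposition~\ref{prop_sg_se_properties}\ref{prop_sg_se_mom} and the elementary estimate $q! \geq (q/e)^q$, a term-by-term expansion of the exponential series gives a bound of the form $\E[\exp(\lambda x_j)] \leq \exp(C'\lambda^2\norm{x_j}_{\psi_2}^2)$, valid for every $\lambda \in \R$. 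Multiplying these estimates over $j$ (this is where independence of the coordinates enters) yields
\begin{equation}
	\E\big[\exp(\lambda \scpr{x,v})\big] = \prod_{j=1}^p \E\big[\exp(\lambda v_j x_j)\big] \leq \exp\Big(C'\lambda^2 R^2 \sum_{j=1}^p v_j^2\Big) = \exp\big(C'\lambda^2 R^2\big),
\end{equation}
using $\norm{v_j x_j}_{\psi_2} = |v_j|\,\norm{x_j}_{\psi_2} \leq |v_j| R$ and $\norm{v}_2 = 1$; the same bound holds with $\scpr{x,v}$ replaced by $-\scpr{x,v}$.

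From this two-sided sub-Gaussian MGF bound the conclusion follows in one of two equivalent ways. One can run the Chernoff argument directly: for $\lambda > 0$, Markov's inequality gives $\Prob(\scpr{x,v} \geq t) \leq \exp(-\lambda t + C'\lambda^2 R^2)$, and optimizing at $\lambda = t/(2C'R^2)$ yields $\exp(-t^2/(4C'R^2))$; a union bound over $\pm\scpr{x,v}$ then supplies the factor $2$ and the exponent $C_H = 1/(4C')$. Alternatively, one invokes the standard equivalence between a sub-Gaussian MGF bound and control of the $\psi_2$-norm to deduce $\norm{\scpr{x,v}}_{\psi_2} \lesssim R$ and then quotes Proposition~\ref{prop_sg_se_properties}\ref{prop_sg_se_ci} verbatim.

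I expect the only genuinely delicate step to be the MGF estimate for a single centered variable: one has to use the vanishing first moment to cancel the linear term in the series and then carefully bound the remaining tail so that its contribution is $O(\lambda^2\norm{x_j}_{\psi_2}^2)$ uniformly in $\lambda$. Everything else — the product over the coordinates, the Chernoff optimization, and the edge cases — is routine bookkeeping; this is essentially the argument of \citet[Sec.~2.6]{ver18}.
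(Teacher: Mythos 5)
Your argument is correct and follows exactly the standard route that the paper cites (Vershynin, Thm.~2.6.3): reduce to an MGF bound $\E[\exp(\lambda\scpr{x,v})] \leq \exp(C'\lambda^2 R^2\norm{v}_2^2)$ via the moment characterization of Proposition~\ref{prop_sg_se_properties}\ref{prop_sg_se_mom} and independence, then apply a two-sided Chernoff bound. The paper gives no proof of its own for this statement — it quotes it verbatim as a known fact — and the only place where it does carry out a closely analogous calculation is in the proof of Lemma~\ref{lemma_bern_sum} (the Bernstein-type symmetrized-sum bound), which proceeds by the same series-expansion/Chernoff strategy you describe; your sketch, including the caveat that the single-variable MGF estimate for \emph{all} $\lambda\in\R$ requires a bit more than naive term-by-term truncation, is sound.
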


\begin{theorem}[\protect{Bernstein's inequality; \citep[Thm.~2.8.2]{ver18}}]\label{thm_bern_ineq}
	Let $x = (x_1, \dots, x_p)\in \R^p$ be a centered random vector with independent, sub-exponential coordinates and $R \coloneqq \max_{1 \leq j \leq p} \norm{x_j}_{\psi_1}$. Then, for every $v \in \R^p$ and $t \geq 0$, we have that
	\begin{equation} \label{thm_bern_ineq_bound}
		\Prob (|\scpr{x,v}| \geq t ) \leq 2 \exp\Big(- C_B \cdot \min\Big\{\tfrac{t^2}{R^2\norm{v}_2^2},\tfrac{t}{R\norm{v}_\infty}\Big\}\Big),
	\end{equation}
	where $C_B > 0$ is a universal constant.
\end{theorem}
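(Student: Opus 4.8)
The plan is to reproduce the classical moment-generating-function (Chernoff) argument behind Bernstein's inequality; see also \citealp[Thm.~2.8.2]{ver18}. Writing $\scpr{x,v} = \sum_{j=1}^p v_j x_j$, we have a sum of independent, centered, sub-exponential random variables with $\norm{v_j x_j}_{\psi_1} = |v_j|\,\norm{x_j}_{\psi_1} \le |v_j|\,R$. The case $v = 0$ being trivial, assume $v \neq 0$; since $-x$ has the same parameters as $x$, it suffices to bound the one-sided probability $\Prob(\scpr{x,v} \ge t)$ and then double, absorbing the factor $2$ into the constant $C_B$.

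The one ingredient needed is the standard MGF estimate for a centered sub-exponential variable: there are universal constants $c_0, C_0 > 0$ such that whenever $\E[Z] = 0$ and $\norm{Z}_{\psi_1} \le K$, one has $\E\bigl[\exp(\lambda Z)\bigr] \le \exp(C_0 \lambda^2 K^2)$ for all $|\lambda| \le c_0/K$. This follows by expanding $\exp(\lambda Z) = 1 + \lambda Z + \sum_{k \ge 2} \lambda^k Z^k / k!$, dropping the linear term via $\E[Z] = 0$, and bounding $\E[|Z|^k] \le (CK)^k k!$ using Proposition~\ref{prop_sg_se_properties}\ref{prop_sg_se_mom}; the resulting series converges geometrically once $|\lambda|$ is below a constant multiple of $1/K$. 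Applying this with $Z = v_j x_j$, $K = |v_j| R$, and using independence gives, for all $|\lambda| \le c_0/(R\norm{v}_\infty)$,
\[
	\E\bigl[\exp(\lambda \scpr{x,v})\bigr] = \prod_{j=1}^p \E\bigl[\exp(\lambda v_j x_j)\bigr] \le \prod_{j=1}^p \exp\bigl(C_0 \lambda^2 v_j^2 R^2\bigr) = \exp\bigl(C_0 \lambda^2 R^2 \norm{v}_2^2\bigr).
\]

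Markov's inequality then yields $\Prob(\scpr{x,v} \ge t) \le \exp\bigl(-\lambda t + C_0 \lambda^2 R^2 \norm{v}_2^2\bigr)$ for every $\lambda \in [0, c_0/(R\norm{v}_\infty)]$, and it remains to optimize over $\lambda$ in this range. The unconstrained minimizer is $\lambda^\star = t/(2 C_0 R^2 \norm{v}_2^2)$: if $\lambda^\star \le c_0/(R\norm{v}_\infty)$ we plug it in and obtain the sub-Gaussian tail $\exp\bigl(-t^2/(4 C_0 R^2 \norm{v}_2^2)\bigr)$; otherwise we take $\lambda = c_0/(R\norm{v}_\infty)$ at the boundary, and the condition $\lambda^\star > c_0/(R\norm{v}_\infty)$ guarantees that at this $\lambda$ the linear term $\lambda t$ exceeds twice the quadratic term $C_0\lambda^2 R^2\norm{v}_2^2$, leaving the sub-exponential tail $\exp\bigl(-c_0 t/(2 R\norm{v}_\infty)\bigr)$. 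Taking the larger of the two exponents over the two regimes gives the claimed bound with, e.g., $C_B \coloneqq \min\{1/(4C_0),\, c_0/2\}$, and the extra factor $2$ out front accounts for the two-sided union bound.

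There is no deep obstacle: the entire argument is routine. The only point that demands a little care is the two-regime optimization over $\lambda$ together with the bookkeeping of constants, so that both cases collapse cleanly into the single $\min\bigl\{t^2/(R^2\norm{v}_2^2),\, t/(R\norm{v}_\infty)\bigr\}$ form — in particular, verifying in the heavy-tail regime that the boundary choice of $\lambda$ really does make the linear term dominate the quadratic one by the required margin.
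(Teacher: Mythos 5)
The paper cites this inequality from Vershynin's textbook and does not prove it. Your MGF/Chernoff argument with the two-regime optimization over $\lambda$ is precisely Vershynin's standard proof; it is correct, and I see no gaps. It is worth noting that the paper reuses exactly this strategy in Appendix~\ref{sec:app:implications}, in the proof of Lemma~\ref{lemma_bern_sum} (a Bernstein-type bound for the symmetrized sum under generic Bernstein concentration), and explicitly states there that it adapts Vershynin's proof of Bernstein's inequality — so your approach is the one the authors themselves would take.
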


\section{Further Details on Section~\ref{sec:mainresults}}
\label{sec:app:remarks}

\subsection{Remarks on the Mismatch Covariance (Definition~\ref{def_mismatch})}
\label{subsec:app:rem_on_mc}

In this part, we adopt the notation of Definition~\ref{def_mismatch}, which has introduced the mismatch parameters.
Since $K^t \subset K^0 \subset \Sphere^{p-1}$, we observe that the mismatch covariance satisfies
\begin{equation}\label{eq:mismatchcovar:loc-vs-glob}
	\rho_t(\beta^\natural) \leq \rho_0(\beta^\natural) \leq \rho(\beta^\natural).
\end{equation}
These bounds imply that all estimation guarantees presented in Section~\ref{sec:mainresults} remain true when replacing the local mismatch covariance by its global variant.
However, there exist relevant scenarios where the second inequality in \eqref{eq:mismatchcovar:loc-vs-glob} becomes strict, so that considering $\rho(\beta^\natural)$ leads to suboptimal results.
To this end, it is useful to first relate the mismatch covariance to the expected risk minimization problem \eqref{LS_K_exp}: Let $\mathcal{L}(\beta) \coloneqq \E[(y-\scpr{x,\beta})^2]$ be the objective function in \eqref{LS_K_exp} and assume that $K \subset \R^p$ is compact and convex. Then, we have that $\nabla\mathcal{L}(\beta^\natural) = 2 \E[(\scpr{x,\beta^\natural} - y) x]$ and therefore
\begin{equation}
	\rho_t(\beta^\natural) = \sup_{v \in K^t} \SCPR[\big]{{-\tfrac{1}{2}\nabla\mathcal{L}(\beta^\natural)}, v } \quad \text{ and }\quad \rho(\beta^\natural) = \norm[\big]{\tfrac{1}{2}\nabla\mathcal{L}(\beta^\natural)}_2.
\end{equation}
Now, let $\beta^\natural \in K$ be an expected risk minimizer on $K$, i.e., a solution to \eqref{LS_K_exp}.
A well-known optimality condition in convex analysis then implies that $\rho_t(\beta^\natural) \leq 0$.
On the other hand, if~$K$ does not contain a global expected risk minimizer, i.e., a solution to $\min_{\beta \in \R^p}\mathcal{L}(\beta)$, we have that $\nabla\mathcal{L}(\beta^\natural) \neq 0$ and therefore $\rho(\beta^\natural) > 0$; and vice versa, $\rho(\beta^\natural)= 0$ implies that $K$ contains a global expected risk minimizer. We refer to Figure~\ref{fig:mismatchcovar} for an illustration of this argument when $x$ is isotropic.\footnote{In the isotropic case, there also exists a nice functional-analytic interpretation: The mapping $\beta \mapsto \scpr{x,\beta}$ is an isometric embedding of the Hilbert space $\R^p$ into $L_2(\Omega, \Prob)$. Then the components $x_1, \dots, x_p \in L_2$ of the random vector $x$ constitute an orthonormal basis for the subspace $G \coloneqq \{\scpr{x,\beta} \mid \beta \in \R^p\}$. This implies $\rho(\beta^\natural) = \norm{(\scpr{y-\scpr{x,\beta^\natural},x_j}_{L_2})_{j = 1}^p}_2 = \norm{P_G(y-\scpr{x,\beta^\natural})}_{L_2}$, where $P_G$ is the orthogonal projection onto $G$; in other words, $\rho(\beta^\natural)$ corresponds to the ``linear component'' of the expected risk of $\beta^\natural$.}

In view of our main result, Theorem~\ref{thm_eg_local}, the local mismatch covariance measures the asymptotic impact of the model mismatch. When positive, $\rho_t(\beta^\natural)$ can be seen as an asymptotic bias term, while a negative value can have favorable effects on the estimation performance of \eqref{LS_K}; see Appendix~\ref{subsec:app:rem_main}\ref{rmk_local_mendelson}.

\begin{figure}
	\centering
	\begin{tikzpicture}[scale=2.5]
	\coordinate (K1) at (-.5,-.3);
	\coordinate (K2) at (.3,-.8);
	\coordinate (K3) at (-.6,-1.4);
	\coordinate (K4) at (-.6,-1.4);
	\coordinate (K5) at (-1,-.8);
	\coordinate[below right=0 and 0 of K1] (betatarg);
	\coordinate (betastar) at ($(betatarg) + (100:.5cm)$);
	
	\filldraw pic[top color=blue!30, bottom color=white, angle radius=3cm] {angle=K5--K1--K2};
	\node at ([xshift=.2cm,yshift=-.4cm]K2) {$\cone{K - \beta^\natural} + \beta^\natural$};
	
	\draw[fill=lightgray, name path = K] (K1) .. controls (K2) and (K3) .. (K4) -- (K5) -- cycle;
	\node[draw,circle,minimum size=1.4cm,dashed] at (betatarg) {}; 
	\begin{scope}
	\clip (K1) -- (K2) -- (K3) -- (K4) -- (K5) -- cycle;
	\node[draw=red,circle,minimum size=1.4cm,ultra thick] at (betatarg) {};
	\end{scope}
	\draw[thick] (K1) .. controls (K2) and (K3) .. (K4) -- (K5) -- cycle;
	
	\node at (barycentric cs:K1=1,K2=1,K3=1,K4=1,K5=1.6) {$K$};
	
	\path (betatarg) -- ++(-60:.3cm) coordinate (anchorSphere);
	\node[above right=0 and 1 of anchorSphere] (anchorSpherelabel) {$K^0 + \beta^\natural$};
	\path[<-,shorten <=3pt,>=stealth,bend right] (anchorSphere) edge (anchorSpherelabel);
	
	\coordinate (gradient) at ($(betatarg)!.4!(betastar)$);
	\node[left=1 of gradient] (gradientlabel) {$-\tfrac{1}{2}\nabla\mathcal{L}(\beta^\natural)$};
	\path[<-,shorten <=3pt,>=stealth,bend left] (gradient) edge (gradientlabel);
	
	\draw[thick,red,latex-] (betastar) -- (betatarg);
	\draw[dashed,red] ($(betatarg) + (190:1cm)$) -- ($(betatarg) + (10:1cm)$);
	\node[blackdot,label={[label distance=-4pt]above right :$\beta^\natural$}] at (betatarg) {};
	\node[blackdot,label={[label distance=-4pt]above right :$\beta^* = \E[yx]$}] at (betastar) {};
	
	\end{tikzpicture}
	\caption{An illustration of the argument in Appendix~\ref{subsec:app:rem_on_mc} when $x$ is isotropic: In this case, we have $\tfrac{1}{2}\nabla\mathcal{L}(\beta) = \beta - \E[y x]$, implying that $\beta^* \coloneqq \E[y x]$ is the (unique) global expected risk minimizer. The above figure shows a situation where $\beta^* \not\in K$ and $\beta^\natural$ is the expected risk minimizer on~$K$. This implies that the negative gradient at~$\beta^\natural$ points out of~$K$ in the direction of~$\beta^*$, or more geometrically, the dashed supporting hyperplane separates $K$ and $\beta^*$. Hence, we have that $\scpr{-\nabla\mathcal{L}(\beta), v} \leq 0$ for all $v \in K - \beta^\natural$, and in particular, $\rho_t(\beta^\natural) \leq 0$. On the other hand, it holds that $\rho(\beta^\natural) = \norm{\tfrac{1}{2}\nabla\mathcal{L}(\beta^\natural)}_2 = \norm{\beta^* - \beta^\natural}_2 > 0$.}
	\label{fig:mismatchcovar}
\end{figure}

\subsection{Remarks on Theorem~\ref{thm_eg_local}}
\label{subsec:app:rem_main}

This part compiles several additional remarks on our main result, Theorem~\ref{thm_eg_local}.
\begin{rmklist}
    \item\label{rmk_local_extensions}
		\textbf{Possible extensions.} Theorem~\ref{thm_eg_local} is amenable to various extensions and generalizations.
		For instance, replacing the $\l{2}$-norm by an arbitrary semi-norm $\gennorm$ in Fact~\ref{fact_excess} would lead to an error bound in terms of $\gennorm$; note that such a step would also require an adaptation of the spherical intersections in the q- and m-complexities and the small-ball condition \eqref{small_ball_condition}.
		This extension becomes particularly useful when the covariance matrix of the input vector~$x$ is poorly conditioned or even degenerate.\footnote{In principle, Assumption~\ref{model_setup} imposes no explicit conditions on the covariance structure of~$x$, but if it becomes too degenerate, the small-ball condition \eqref{small_ball_condition} might become unrealizable for $\gennorm = \gennorm_2$.} In this case, an appropriate linear transform of the $\l{2}$-error can account for the underlying covariance structure; see \citep[Chap.~3 and Sec.~4.3]{gen19} for a detailed discussion of this issue.
		
		Apart from that, it is possible to incorporate different loss functions or adversarial noise into Theorem~\ref{thm_eg_local}; cf.~\citep{gen16} and \citep[Chap.~3]{gen19}. Working out the details goes beyond the scope of this paper, but is expected to be relatively straightforward.
		Furthermore, one might show similar estimation guarantees for the ``basis-pursuit'' version or the unconstrained version of the generalized Lasso; cf.~\citep[Chap.~3]{gen19} and \citep{lm16,lm17}.
		Finally, it is worth mentioning that the sub-exponentiality of $y$ in Assumption~\ref{model_setup} could be replaced by a less restrictive tail condition.
		This modification would concern the analysis of the multiplier process in Subsection~\ref{subsec:proofs:multiplierprocess}; for example, a finite moment assumption for $y$ would be sufficient when using Theorem~\ref{thm_men_general_bound} instead of Theorem~\ref{thm_men_subexp_bound}.
	\item\label{rmk_local_smallball}
		\textbf{Small-ball condition.} The small-ball condition \eqref{small_ball_condition} in Assumption~\ref{model_setup} is based on Mendelson's more general condition (see \citep[Asm.~3.1]{men15}), which reads
		\begin{equation}
			\exists u>0: Q_{\mathcal{F}-\mathcal{F}}(u) \coloneqq \inf_{f \in \mathcal{F}-\mathcal{F}} \Prob(|f(x)| \geq u \norm{f}_{L^2})>0,
		\end{equation}
		where $\mathcal{F}$ is a class of (not necessarily linear) hypothesis functions. We emphasize that the small-ball condition \eqref{small_ball_condition} is stated relative to the hypothesis set $K$, and it particularly suffices for $x$ to be non-degenerate relative to the subspace $\spann(K-K)$. This reflects the fact that the input vectors~$x_i$ are only of interest to us insofar as they enable us to discern differences between the hypotheses in $K$. 
		
		Furthermore, one can easily replace the small-ball function $Q_{2\tau}(K^\Delta,x)$ in Theorem~\ref{thm_eg_local} by a more explicit expression.
		For instance, the Paley-Zygmund inequality implies the following lower bound (cf.~\citep[Subsec.~2.6.5]{tro15}): Let
		\begin{equation}\label{paley_alpha_delta}
			\alpha \coloneqq \inf_{v \in K^\Delta} \E[|\scpr{x,v}|] \quad \text{and} \quad \delta \coloneqq \sup_{v \in K^\Delta} \E[\scpr{x,v}^2]
		\end{equation}
		and set $\tau \coloneqq \alpha / 4$. As long as $\alpha > 0$, we have that
		\begin{equation}\label{eq_paley_zygmund}
			\tau\cdot Q_{2\tau}(K^\Delta,x) \geq \frac{\alpha^3}{16\delta}.
		\end{equation}
		This lower bound is a more convenient expression, since $\alpha$ measures the degeneracy of $x$ relative to~$K^\Delta$ while $\delta$ can be seen as an (an-)isotropy parameter.
	\item\label{rmk_local_mendelson}
		\textbf{Low- and high-noise regime.} The statement of Theorem~\ref{thm_eg_local} could be further refined by specifying the smallest value of~$t$ such that the conditions \eqref{thm_eg_local_condition_n} and \eqref{thm_eg_local_condition_t} still hold true, while all other model parameters remain fixed.
		Such an optimization strategy is elaborated in the general learning framework of \citet{men15}. 
		Although the latter has certainly a wider scope than ours, there are important conceptual overlaps. 
		Indeed, \eqref{thm_eg_local_condition_n} is closely related to what Mendelson refers to as the ``low-noise'' regime: this condition is intrinsic to the hypothesis set $K$ and does not depend on the model mismatch (the ``noise'') $y-\scpr{x,\beta^\natural}$; in particular, it specifies how many samples are required for \eqref{LS_K} to recover a linear hypothesis function exactly.
		In contrast, the condition \eqref{thm_eg_local_condition_t} is associated with the ``high-noise'' regime, as it strongly depends on the model mismatch in terms of $\rho_t(\beta^\natural)$ and $\sigma(\beta^\natural)$.
		A remarkable conclusion is possible when $\rho_0(\beta^\natural) < 0$ (cf.~Appendix~\ref{subsec:app:rem_on_mc}): in this case, we may simply set $t = 0$, while \eqref{thm_eg_local_condition_t} can be even satisfied if $\sigma(\beta^\natural) > 0$. In other words, exact recovery of $\beta^\natural$ is feasible in certain scenarios, despite the presence of noise or model misspecifications.
	\item\label{rmk_local_radius}
		\textbf{Alternative m-complexity.} An important detail in the definition of the m-complexity $m_t^{(g,e)}$ is that $\convh(S)$ needs to contain the origin as well.
		Without this minor modification, an extra term would have to be added to $m_t^{(g,e)}$ in the error bound of \eqref{thm_eg_local_condition_t}, capturing the radii of $(K-\beta^\natural) \cap t\Sphere^{p-1}$ with respect to $\gennorm_g$ and $\gennorm_e$ (see the proof of Lemma~\ref{lemma_lambda_tilde}).
		The appearance of such an additive term is in fact quite common in the literature, e.g., see~\citep{dir15, men16}.
\end{rmklist}

\section{Proof of Proposition~\ref{prop_upper_bounds_complexities}}
\label{sec:app:complexities}

For the claims of~\ref{prop_upper_bounds_complexities_20} and~\ref{prop_upper_bounds_complexities_2inf}, we first apply Lemma~\ref{lemma_q_and_m_global}\ref{lemma_q_and_m_global_3}--\ref{lemma_q_and_m_global_5} to observe that
\begin{equation}\label{prop_upper_bounds_complexities_local_global}
q_{t,n}^{(g,e)}(K-\beta^\natural) \lesssim m_{t}^{(g,e)}(K-\beta^\natural) \leq m_{0}^{(g,e)}(K-\beta^\natural) = m^{(g,e)}((\underbrace{\cone{K-\beta^\natural}}_{\eqqcolon \mathcal{D}_1(\beta^\natural)} {} \cap {} \Sphere^{p-1}) \cup \{0\}),
\end{equation}
where the second inequality is due to $\tfrac{1}{t}(K-\beta^\natural) \subset \cone{K-\beta^\natural}$ for all $t > 0$.
Moreover, according to \eqref{prop_sg_eq} and \eqref{bound_exponential_width}, it holds that
\begin{equation}
m^{(2,0)}((\mathcal{D}_1(\beta^\natural)\cap \Sphere^{p-1}) \cup \{0\}) \asymp w((\mathcal{D}_1(\beta^\natural)\cap \Sphere^{p-1}) \cup \{0\})
\end{equation}
and
\begin{equation}
m^{(2,\infty)}((\mathcal{D}_1(\beta^\natural)\cap \Sphere^{p-1}) \cup \{0\}) \lesssim \sqrt{\log(p)} \cdot w((\mathcal{D}_1(\beta^\natural)\cap \Sphere^{p-1}) \cup \{0\}).
\end{equation}
Since $\mathcal{D}_1(\beta^\natural)$ corresponds to the descent cone of $\gennorm_1$ at a $k$-sparse vector $\beta^\natural$, the claims of~\ref{prop_upper_bounds_complexities_20} and~\ref{prop_upper_bounds_complexities_2inf} now follow from a standard bound for the conic Gaussian width (e.g., see \citep[Eq.~2.11]{tro15}):
\begin{equation}
w((\mathcal{D}_1(\beta^\natural)\cap \Sphere^{p-1}) \cup \{0\}) \lesssim \sqrt{k \log\Big(\frac{p}{k}\Big)}.
\end{equation}

The proof of~\ref{prop_upper_bounds_complexities_02} is a little more involved. In a first step, we intend to show that
\begin{equation}\label{prop_upper_bounds_complexities_02_conebound}
\mathcal{D}_1(\beta^\natural) \cap \Sphere^{p-1} \subset \convh(S)
\end{equation}
with $S \coloneqq \{v \in \R^p \MID \norm{v}_0 \leq k, \norm{v}_2 \leq 3\}$. In order to verify this inclusion, we adapt the proof of \citet[Lem.~13]{pw14}:
Writing $A \coloneqq \mathcal{D}_1(\beta^\natural) \cap B_2^p$, we define the support functions $h_A,h_S: \R^p \to \R$ by
\begin{equation}
h_A(z) \coloneqq \sup_{v \in A} \scpr{z,v} \quad \text{ and } \quad h_S(z) \coloneqq \sup_{v \in S} \scpr{z,v}=\sup_{v \in \convh(S)} \scpr{z,v}.
\end{equation}
If there would exist a point $v \in A \setminus \convh(S)$, then a variant of the hyperplane separation theorem implies that there exists some $z \in \R^p$ with $\scpr{z,v} > h_S(z)$ and therefore $h_A(z)>\nobreak h_S(z)$. Consequently, it suffices to show that $h_A(z) \leq h_S(z)$ holds true for all $z \in \R^p$.

To this end, let $z \in \R^p \setminus \{0\}$ be fixed and let $I_1 \subset \{1,\dots,p\}$ contain $k$ indices whose corresponding entries in $z$ have the $k$ largest absolute values. Then, it is not hard to see that $h_S(z) = \scpr{z,v}$ for
\begin{equation}
v = \frac{3}{\norm{P_{I_1}(z)}_2} \cdot P_{I_1}(z) \in S,
\end{equation}
where $P_{I_1}(z) \in \R^{p \times p}$ denotes the orthogonal projection onto the coordinate space associated with~$I_1$.
Consequently, we obtain
\begin{equation}
h_S(z)=3\cdot \sqrt{\sum_{j \in I_1}z_j^2},
\end{equation}
and for every $j \not\in I_1$, we have that
\begin{equation}
|z_j| \leq  \frac{1}{k} \cdot \sum_{j' \in I_1}|z_{j'}| \leq \frac{1}{\sqrt{k}} \cdot \sqrt{\sum_{{j'} \in I_1}z_{j'}^2}.
\end{equation}
Now let $v \in A$. We denote by $I_2 \subset \{1,\dots,p\}$ the set of indices corresponding to the non-zero entries of $\beta^\natural$, which satisfies $|I_2|\leq k$.
Since $v$ belongs to descent cone of $\gennorm_1$ at $\beta^\natural$ and to the Euclidean unit ball, it follows that
\begin{equation}
\sum_{j \not\in I_2}|v_j| \leq \sum_{j \in I_2}|v_j| \leq \sqrt{k} \sqrt{\sum_{j \in I_2}v_j^2} \leq \sqrt{k}.
\end{equation}
Combining these observations, we obtain
\begin{align}
\scpr{z,v}&=\sum_{j \in I_2}z_j v_j+\sum_{\substack{j \not\in I_2\\ j \in I_1}}z_j v_j+\sum_{\substack{j \not\in I_2\\ j \not\in I_1}}z_j v_j\\*
&\leq 2 \cdot \norm{v}_2 \cdot \sqrt{\sum_{j \in I_1}z_j^2} + \max_{j \not\in I_1} |z_j| \cdot \sqrt{k} \leq 3 \cdot \sqrt{\sum_{j \in I_1}z_j^2} =h_S(z),
\end{align}
which concludes the proof of \eqref{prop_upper_bounds_complexities_02_conebound}.

Next, we observe that $S \subset 3 \sqrt{k} B_1^p$, which is a consequence of the Cauchy-Schwarz inequality.
Now, consider the finite set $F \coloneqq 3 \sqrt{k} \cdot \{\pm u_1, \dots, \pm u_p\}$, where $u_1, \dots, u_p \in \R^p$ denote the Euclidean unit vectors.
Then, we have that
\begin{equation}\label{prop_upper_bounds_complexities_02_conebound_2}
	\mathcal{D}_1(\beta^\natural) \cap \Sphere^{p-1} \subset \convh(S) \subset \convh(3 \sqrt{k} B_1^p) = 3 \sqrt{k} B_1^p = \convh(F).
\end{equation}
Finally, we make use of \eqref{eq_complexity_finiteset} to obtain the following bound:
\begin{equation}
	\gamma_1(F, \gennorm_2) \lesssim \Delta_2(F) \cdot \log(|F|) \lesssim \sqrt{k} \cdot \log(2p).
\end{equation}
The claim of~\ref{prop_upper_bounds_complexities_02} follows directly from a combination of this bound with \eqref{prop_upper_bounds_complexities_local_global}, \eqref{prop_upper_bounds_complexities_02_conebound_2}, and the definition of the m-complexity.
\qed

\section{Proofs for Subsection~\ref{subsec:proofs:implications}}
\label{sec:app:implications}

\begin{proof}[Proof of Lemma~\ref{lemma_bc_norm_bound}] 
	Using the generic Bernstein concentration of $x$, we observe that
	\begin{align}
		\norm{v^*}_{L^q}^q = \E[|\scpr{v,x}|^q] &= \int_0^\infty \Prob(|\scpr{v,x}|^q \geq s) \ ds \\*
		&= \int_0^\infty \Prob(|\scpr{v,x}| \geq r) \cdot q r^{q-1} \ dr
		\\ &\leq \int_0^\infty 2  \exp\Big(- \min\Big\{\tfrac{r^2}{\norm{v}_g^2},\tfrac{r}{\norm{v}_e}\Big\}\Big) \cdot q r^{q-1} \ dr \\ &= \int_0^\infty 2  \max \Big\{\exp\Big(-\tfrac{r^2}{\norm{v}_g^2}\Big),\exp\Big(-\tfrac{r}{\norm{v}_e}\Big) \Big\} \cdot q r^{q-1} \ dr \\
		&\leq \int_0^\infty 2 \exp\Big(-\tfrac{r^2}{\norm{v}_g^2}\Big) \cdot q r^{q-1} \ dr + \int_0^\infty 2\exp\Big(-\tfrac{r}{\norm{v}_e}\Big) \cdot q r^{q-1} \ dr \\
		&= 2q \cdot \norm{v}_g^q\cdot \int_0^\infty \exp(-r^2) \cdot r^{q-1} \ dr + 2q \cdot \norm{v}_e^q \cdot \int_0^\infty  \exp(-r) \cdot r^{q-1} \ dr \\
		&\stackrel{\mathllap{\text{Definition of $\Gamma$-function}}}{=} q \cdot \norm{v}_g^q \cdot \Gamma(q/2) +  2q \cdot \norm{v}_e^q \cdot \Gamma(q) \\*
		& \stackrel{\mathllap{\text{Stirling's approximation}}}{\leq} 2q \cdot \norm{v}_g^q \cdot (q/2)^{q/2}+ 2q \cdot \norm{v}_e^q \cdot q^q.
	\end{align}
	Since $(a+b)^{1/q} \leq a^{1/q}+b^{1/q}$ for all $a,b \geq 0$ and $q \geq 1$, this implies
	\begin{equation}
		\norm{v^*}_{L^q} \leq \sqrt[q]{2q} \cdot \Big(\norm{v}_g \cdot \frac{\sqrt{q}}{\sqrt{2}}+ \norm{v}_e \cdot q\Big).
	\end{equation}
	Observing that $q \mapsto \sqrt[q]{2q}$ is bounded on the interval $[1,\infty)$, the claim follows. 
\end{proof}

\begin{proof}[Proof of Lemma~\ref{lemma_bern_sum}]
	Our basic proof strategy is adapted from the proof of Bernstein's inequality by \citet[Thm.~2.8.1]{ver18}. Let $\tilde{x}_i \coloneqq \varepsilon_i x_i$ and $S \coloneqq \sum_{i=1}^n \tilde{x}_i$. Then, the generic Chernoff bound (for arbitrary $v \in \R^p$, $\lambda \in \R$, and $t \geq 0$) reads as follows:
	\begin{equation}\label{lemma_bern_gen_chernoff}
		\Prob(\scpr{v,S} \geq t)=\Prob\bigg(\sum_{i=1}^n \scpr{v,\tilde{x}_i} \geq t\bigg) \leq \exp(-\lambda t) \cdot \prod_{i=1}^n \E[\exp(\lambda \scpr{v,\tilde{x}_i})].
	\end{equation}
	
	To proceed, we need an upper bound for the moment generating function $\E[\exp(\lambda \scpr{v,\tilde{x}_1})]$. Since $\tilde{x}_1$ is centered, we have that
	\begin{equation}
		\E[\exp(\lambda \scpr{v,\tilde{x}_1})] =\E\bigg[ 1+ \lambda \scpr{v,\tilde{x}_1}+ \sum_{q=2}^\infty \frac{(\lambda \scpr{v,\tilde{x}_1})^q}{q!}\bigg]=1+\sum_{q=2}^\infty \frac{\lambda^q \cdot \E[\scpr{v,\tilde{x}_1}^q]}{q!}.
	\end{equation}
	Due to symmetry, \eqref{def_gen_bern_conc_ineq} also holds true for the symmetrized random vector $\tilde{x}_1$ and from the proof of Lemma~\ref{lemma_bc_norm_bound}, we therefore obtain
	\begin{align}
		\E[\exp(\lambda \scpr{v,\tilde{x}_1})] &\leq 1+\sum_{q=2}^\infty \frac{\lambda^q \cdot \E[|\scpr{v,\tilde{x}_1}|^q]}{q!} \\* &\leq 1+\sum_{q=2}^\infty \frac{\lambda^q \cdot \big(2q \cdot \norm{v}_g^q \cdot (q/2)^{q/2}+ 2q \cdot \norm{v}_e^q \cdot q^q\big)}{q!} \\
		&=\underbrace{1+\sum_{q=2}^\infty \frac{\lambda^q \cdot 2q \cdot \norm{v}_e^q \cdot q^q}{q!}}_{\eqqcolon A}+\underbrace{\sum_{q=2}^\infty \frac{\lambda^q \cdot 2q \cdot \norm{v}_g^q \cdot (q/2)^{q/2}}{q!}}_{\eqqcolon B}.
	\end{align}
	
	\paragraph{Upper bound for $A$:} According to Stirling's approximation, we have $q!\geq (q/e)^q$, which implies
	\begin{equation}
		A \leq 1+\sum_{q=2}^\infty \frac{ 2q \cdot (\lambda\norm{v}_e)^q \cdot q^q}{(q/e)^q} = 1+ \sum_{q=2}^\infty 2q \cdot (e\lambda\norm{v}_e)^q.
	\end{equation}
	If $\lambda < \frac{1}{2e \norm{v}_e}$, the above series is convergent and we have
	\begin{equation}
		A \leq 1 +2\cdot \frac{(e\lambda\norm{v}_e)^2 \cdot (2-e\lambda\norm{v}_e)}{(1-e\lambda\norm{v}_e)^2} \leq 1 + 16  (e\lambda\norm{v}_e)^2\leq \exp(16 (\lambda \norm{v}_e)^2). 
	\end{equation}
	
	\paragraph{Upper bound for $B$:} 
	Since we do not want to introduce further restrictions on $\lambda$, let us distinguish two cases: If $\lambda < 1 / (2e \norm{v}_g)$, we use that $(q/2)^{q/2}\leq q^q$ (due to $q\geq 2$) and apply the same strategy as for the bound for $A$, which yields $B \leq \exp(16 (\lambda \norm{v}_g)^2)$. Now, let  $\lambda \geq 1 / (2e \norm{v}_g)$. 
	In this case, we use the basic inequality $2q(q / 2)^{q/2} \leq 3q! / \floor{q/2}!$, which holds true for all $q \geq 2$. This implies
	\begin{equation}
		B \leq 3\sum_{q=2}^\infty \frac{(\lambda\norm{v}_g)^q}{\floor{\frac{q}{2}}!}.
	\end{equation}
	Splitting this series into even and odd indices then yields
	\begin{equation}
		B \leq 3\bigg(\sum_{j=1}^\infty \frac{(\lambda\norm{v}_g)^{2j}}{\floor{\frac{2j}{2}}!}+\sum_{j=1}^\infty \frac{(\lambda\norm{v}_g)^{2j+1}}{\floor{\frac{2j+1}{2}}!} \bigg) =
		3\bigg(\underbrace{\sum_{j=1}^\infty \frac{(\lambda\norm{v}_g)^{2j}}{j!}}_{=\exp((\lambda\norm{v}_g)^2)-1}+\underbrace{\sum_{j=1}^\infty \frac{(\lambda\norm{v}_g)^{2j+1}}{j!}}_{\leq\lambda\norm{v}_g \exp((\lambda\norm{v}_g)^2)}\bigg).
	\end{equation}
	By $(1+\lambda\norm{v}_g)\leq \exp((\lambda\norm{v}_g)^2)$, it follows that
	\begin{equation}
		B \leq 3 \exp(2(\lambda\norm{v}_g)^2)-3.
	\end{equation}
	Since $\lambda\norm{v}_g \geq 1 / (2e)$, there exists a universal constant $C'>0$ with $3 \leq \exp(C'\cdot(\lambda\norm{v}_g)^2)$, so that we obtain
	\begin{equation}
		B \leq \exp\big((2+C')\cdot(\lambda\norm{v}_g)^2\big)-3.
	\end{equation}
	
	Next, we combine the above bounds for $A$ and $B$: the basic inequality $\exp(a)+\exp(b)-\nobreak 3\leq \exp(a+b)$ for all $a, b \geq 0$ implies that
	\begin{equation} 
		\E[\exp(\lambda \scpr{v,\tilde{x}_1})] \leq \exp\Big(\max\{2+C',16\} \cdot (\lambda\norm{v}_g)^2+16(\lambda\norm{v}_e)^2\Big) \leq \exp(C\lambda^2(\norm{v}_g+\norm{v}_e)^2)
	\end{equation} 
	for all $\lambda< 1 / (2e \norm{v}_e)$ and a universal constant $C > 0$.
	Plugging this into \eqref{lemma_bern_gen_chernoff} and assuming that $\lambda< 1 / (2e \norm{v}_e)$, we have that
	\begin{equation}
		\Prob(\scpr{v,S} \geq t) \leq \exp(-\lambda t) \cdot \prod_{i=1}^n \E[\exp(\lambda \scpr{v,\tilde{x}_1})] = \exp\big(-\lambda t + n C\lambda^2(\norm{v}_g+\norm{v}_e)^2\big).
	\end{equation}
	Setting
	\begin{equation}
		\lambda \coloneqq \min \Big\{\tfrac{t}{2n C(\norm{v}_g+\norm{v}_e)^2},\tfrac{1}{4e \norm{v}_e} \Big\} < \frac{1}{2e \norm{v}_e},
	\end{equation}
	we finally obtain
	\begin{equation}
		\Prob(\scpr{v,S} \geq t) \leq \exp\Big(-\tilde{C} \cdot \min\Big\{\tfrac{t^2}{n(\norm{v}_g+\norm{v}_e)^2},\tfrac{t}{\norm{v}_e} \Big\}\Big)
	\end{equation}
	for a universal constant $\tilde{C} > 0$. Now, the claim follows if we replace $t$ by $\sqrt{n}t$.
\end{proof}

\end{appendices}


\end{document}